\numberwithin{equation}{section} 
\newtheorem{lemma}{Lemma}[section]
\newtheorem{proposition}[lemma]{Proposition}
\newtheorem{theorem}[lemma]{Theorem}
\theoremstyle{definition}
\newtheorem{definition}[lemma]{Definition}
\newtheorem{example}[lemma]{Example}
\newtheorem{remark}[lemma]{Remark}
\newtheorem{convention}[lemma]{Convention}
\newlist{thm_enum}{enumerate}{1}
\setlist[thm_enum]{label=\normalfont(\alph*)}
\newlist{def_enum}{enumerate}{1}
\setlist[def_enum]{label=\normalfont(\roman*)}
\newlist{equiv_enum}{enumerate}{1}
\setlist[equiv_enum]{label=\normalfont(\roman*)}
\newcommand{\IZ}{\mathbb{Z}}
\newcommand{\IN}{\mathbb{N}}
\newcommand{\IR}{\mathbb{R}}
\newcommand{\IC}{\mathbb{C}}
\newcommand{\IE}{\mathbb{E}}
\newcommand{\IP}{\mathbb{P}}
\newcommand{\abs}[1]{\left\lvert#1\right\rvert}
\newcommand{\normalabs}[1]{\lvert#1\rvert}
\newcommand{\norm}[1]{\left\lVert#1\right\rVert}
\newcommand{\normalnorm}[1]{\lVert#1\rVert}
\newcommand{\biggnorm}[1]{\biggl\lVert#1\biggr\rVert}
\newcommand{\R}[2][\empty]{
	\ifthenelse{\equal{#1}{\empty}}
		{\mathcal{R}\left\{#2\right\}}
		{\mathcal{R}_{#1}\left\{#2\right\}}
}
\newcommand{\LeftEqNo}{\let\veqno\@@leqno}
\renewcommand{\d}{\mathop{}\!d}
\renewcommand{\Re}{\operatorname{Re}}
\renewcommand{\Im}{\operatorname{Im}}
\renewcommand{\epsilon}{\varepsilon}
\let\temp\phi
\let\phi\varphi
\let\varphi\temp
\DeclareMathOperator{\Id}{Id}
\DeclareMathOperator{\divv}{div}
\DeclareMathOperator{\Tr}{Tr}
\begin{document}

\title[Maximal Regularity for Rough Elliptic Operators]{Non-Autonomous Maximal $L^p$-Regularity for Rough Divergence Form Elliptic Operators}

\begin{abstract}
	We obtain $L^p(L^q)$ maximal regularity estimates for time dependent second order elliptic operators in divergence form with rough dependencies in the spatial variables.
\end{abstract}

\author{Stephan Fackler}
\address{Institute of Applied Analysis, University of Ulm, Helmholtzstr.\ 18, 89069 Ulm}
\email{stephan.fackler@uni-ulm.de}
\thanks{This work was supported by the DFG grant AR 134/4-1 ``Regularität evolutionärer Probleme mittels Harmonischer Analyse und Operatortheorie''. The author thanks Manuel Bernhard and Manfred Sauter for various fruitful discussions on elliptic operators.}
\keywords{non-autonomous maximal regularity, parabolic equations in divergence form, Banach scales, extrapolation spaces}
\subjclass[2010]{Primary 35B65; Secondary 35K10, 35B45, 47D06.}

\maketitle

\section{Introduction}
	
	We treat the maximal $L^p$-regularity of the non-autonomous Cauchy problem
	\begin{equation*}
		\LeftEqNo
		\label{nacp}\tag{NACP}
		\left\{
		\begin{aligned}
			\dot{u}(t) + A(t)u(t) & = f(t) \\
			u(0) & = u_0,
		\end{aligned}
		\right.
	\end{equation*}
	where $(A(t))_{t \in [0,T]}$ for some $T \in (0, \infty)$ are elliptic operators in divergence form on $L^q$ for some $q \in (1, \infty)$. One says that the problem~\eqref{nacp} has maximal $L^p$-regularity for $p \in (1, \infty)$ and for initial values in some subspace $Z \hookrightarrow X$ if for all right hand sides $f \in L^p([0,T];L^q)$ and all initial values $u_0 \in Z$ there exists a unique solution $u$ in its maximal regularity space, i.e.\ $u \in L^p([0,T];L^q)$ with $u(t) \in D(A(t))$ for almost all $t \in [0,T]$ and both the distributional derivative $\dot{u}$ and $A(\cdot)u(\cdot)$ lie in $L^p([0,T];L^q)$.
	
	Maximal regularity is very useful for the study of quasilinear partial differential equations as it allows the application of powerful linearization techniques (see for example~\cite{Pru02} or~\cite{Ama05}). In the autonomous case $A(t) = A$, the theory of maximal $L^p$-regularity is far developed.
	
	In the non-autonomous case, there are two fundamentally different situations, namely those of time dependent and time independent domains $D(A(t))$. In the time independent case very convenient criteria can be deduced with perturbation techniques if the operators depend continuously on the time variable (\cite{PruSch01} and~\cite{ACFP07}). However, one can go further: for second order elliptic operators in non-divergence form maximal $L^p$-regularity does even hold for time measurable coefficients if the spatial components lie in $VMO$. There have been many results in this direction in the past years, e.g.\ \cite{Kry08}, \cite{KimKry07}, \cite{Kim07}. Very recently, Gallarati and Veraar proved an abstract $L^p$-maximal regularity criterion for time independent domains that essentially covers these cases in~\cite{GalVer14} and~\cite{GalVer15}.
	
	However, in the setting of time dependent domains, the theory is far less developed. The best available criteria use the so-called Acquistapace--Terreni conditions, see \cite{HieMon00b} in the Hilbert space and \cite{PorStr06} in the Banach space case. Applications are rarely found as the condition looks non-intuitive and therefore difficult to verify. In contrast to this first impression, in the Hilbert space case, very convenient sufficient criteria for these conditions in terms of form methods were found (see~\cite{OuhSpi10} and~\cite{HaaOuh15}). In this work we give a similar criterion in general UMD spaces in terms of Banach scales.
	
	To illustrate its power we apply it to two concrete non-autonomous model problems. First as a model case for more general boundary problems, we obtain for $p,q \in (1, \infty)$ complete $L^q(L^p)$ maximal regularity estimates for the Laplacian with non-autonomous Robin boundary conditions assuming Hölder continuity in the time and Lipschitz continuity in the space variable. In the second model case and the heart of the article we show for $p \in [2, \infty)$ and $q \in (1, \infty)$ maximal $L^q(L^p)$ regularity for time dependent pure second order elliptic operators in divergence form subject to Dirichlet boundary conditions if the spatial coefficients -- analogous to the known results for time independent domains -- lie in $VMO$, whereas the time regularity is assumed to be $\alpha$-Hölder for $\alpha > \frac{1}{2}$. To the best knowledge of the author no results in this direction have been known in the non-Hilbert space case. Further, the results seem to be rather optimal: whereas the $VMO$ assumption in the spatial variables is a common theme for divergence and non-divergence form operators, the Hölder assumption in time is optimal for general operators by the recent negative solution of Lions' problem by the author~\cite{Fac16c}. We remark that in~\cite[Section~4]{HieMon00} the same result is obtained under the stronger assumption that the spatial coefficients are $C^1$. Further, for a weaker notion of regularity, a priori estimates were obtained in~\cite{Kry07} and~\cite{Don10} only assuming sufficiently small bounded mean oscillation in the spatial variables.
	
	Let us shortly sketch the structure of our presentation: in the next section we give a detailed account of the needed mathematical concepts: interpolation functors, imaginary powers, Banach scales, $\mathcal{R}$-boundedness and the Acquistapace--Terreni framework. Afterwards we present a general maximal regularity result in Theorem~\ref{thm:abstract_mr_non-zero}. In the main part we deal with the verification of the assumptions of Theorem~\ref{thm:abstract_mr_non-zero} for elliptic operators in divergence form and then prove the claimed concrete maximal regularity results.
	
\section{Mathematical Background}

\subsection{Interpolation functors and fractional domain spaces}

	From now on all Banach spaces $X$ and $Y$ are assumed to be complex. The Banach space of all bounded linear operators between $X$ and $Y$ is denoted by $\mathcal{B}(X)$. For a closed operator $(A,D(A))$ on $X$ we write $\sigma(A)$ for the spectrum of $A$ and $\rho(A)$ for the resolvent set of $A$. For $\lambda \in \rho(A)$ the resolvent of $A$ is defined as $R(\lambda, A) = (\lambda - A)^{-1}$. Moreover, for $\varphi \in (0, \pi)$ let $\Sigma_{\varphi} \coloneqq \{ z \in \IC \setminus \{0 \}: \abs{\arg z} < \varphi \}$ be the sector of opening angle $\varphi$.
	
	\begin{definition}
		A closed densely defined operator $(A,D(A))$ on a Banach space $X$ is called \emph{sectorial} if $A$ has dense range and if there exists $\varphi \in (0, \frac{\pi}{2})$ with $\sigma(A) \subset \overline{\Sigma_{\varphi}}$ and
			\[
				\sup_{\lambda \not\in \overline{\Sigma_{\varphi}}} \norm{\lambda R(\lambda, A)} < \infty.
			\]
	\end{definition}
	
	Recall that an operator $A$ is sectorial if and only if $-A$ generates a bounded analytic $C_0$-semigroup on $X$. 
	For a sectorial operator $A$ and $z \in \IC$ one can define the fractional powers $A^z$. For details on sectorial operators and their fractional powers we refer to~\cite{CarSan01} and~\cite{Haa06}.
	
	\begin{definition}
		A sectorial operator $(A,D(A))$ on a Banach space $X$ has \emph{bounded imaginary powers} if $A^{it}$ is a bounded operator for all $t \in \IR$.
	\end{definition}

	If $A$ has bounded imaginary powers, then it can be shown that $(A^{it})_{t \in \IR}$ is a strongly continuous group. Hence, there exist $M \ge 0$ and $\omega \ge 0$ with
		\[
			\normalnorm{A^{it}}_{\mathcal{B}(X)} \le M e^{\omega \abs{t}} \qquad \text{for all } t \in \IR.
		\]
		
	Let us now turn to interpolation functors. Two Banach spaces $X$ and $Y$ are called a \emph{Banach couple} if there exist a Hausdorff topological vector space in which both $X$ and $Y$ embed. We write $(X,Y)$ for such a couple. For a Banach couple $(X,Y)$ one can define its sum $X + Y$, which is a Banach space for the norm $\norm{z}_{X+Y} \coloneqq \inf \{ \norm{x}_X + \norm{y}_Y: z = x + y, x \in X, y \in Y \}$. Further, the intersection $X \cap Y$ is a Banach space for the norm $\norm{z}_{X \cap Y} = \norm{x}_X + \norm{y}_Y$. The class of Banach couples becomes a category if we let the morphisms $T\colon (X_1,Y_1) \to (X_2, Y_2)$ be those $T \in \mathcal{B}(X_1 + Y_1, X_2 + Y_2)$ that satisfy $TX_i \subset Y_i$ for $i = 1, 2$.
	
	\begin{definition}
		An \emph{interpolation functor} is a functor $\mathcal{F}$ from the category of Banach couples into the category of Banach spaces that satisfies
			\begin{def_enum}
				\item $X \cap Y \subset \mathcal{F}((X,Y)) \subset X + Y$ for all Banach couples $(X,Y)$.
				\item $\mathcal{F}(T) = T_{|\mathcal{F}((X_1,Y_1))}$ for all morphisms $T\colon (X_1, Y_1) \to (X_2, Y_2)$ of interpolation couples.
			\end{def_enum}
		An interpolation functor $\mathcal{F}$ has \emph{order} $\theta \in [0,1]$ if there exists a constant $c \ge 0$ such that for all morphisms $T\colon (X_1, Y_1) \to (X_2, Y_2)$ of Banach couples one has
			\[
				\norm{T}_{\mathcal{B}(\mathcal{F}((X_1,Y_1)),\mathcal{F}((X_2, Y_2)))} \le c \norm{T}_{\mathcal{B}(X_1,X_2)}^{1-\theta} \norm{T}_{\mathcal{B}(Y_1,Y_2)}^{\theta}.
			\]
		Further, $\mathcal{F}$ is exact of order $\theta \in [0,1]$ if one can choose $c = 1$. The functor $\mathcal{F}$ is called \emph{regular} if $X \cap Y$ is dense in $\mathcal{F}((X,Y))$ for all interpolation couples $(X,Y)$.
	\end{definition}
	
	The most important examples of exact interpolation functors of order $\theta$ are the complex interpolation functors $[\cdot,\cdot]_{\theta}$ and the real interpolation functors $(\cdot,\cdot)_{\theta, q}$ for $q \in [1, \infty]$. The complex interpolation functors are regular for $\theta \in (0,1)$, whereas the real interpolation functors are regular provided $\theta \in (0,1)$ and $q \in [1, \infty)$. We now present the definition of the complex interpolation method in view of the next result. For more details and the definition of the real interpolation method we refer to the monographs~\cite{BerLoe76} and~\cite{Tri78}.
	
	We set $S \coloneqq \{ z \in \IC: 0 < \Re z < 1 \}$. For a Banach couple $(X,Y)$ we denote by $\mathcal{J}(X,Y)$ the space of all continuous and bounded functions $f\colon \overline{S} \to X + Y$ whose restrictions to $S$ are analytic and for which $f(i\cdot)\colon \IR \to X$ and $f(1+i\cdot)\colon \IR \to Y$ are bounded continuous functions. The space $\mathcal{J}(X,Y)$ is a Banach space for the norm $\norm{f}_{\mathcal{J}(X,Y)} = \max \{ \sup_{t \in \IR} \norm{f(it)}_X, \sup_{t \in \IR} \norm{f(1+ it)}_Y \}$.  
	
	\begin{definition}
		For a Banach couple $(X,Y)$ and $\theta \in [0,1]$ we let
			\[
				[X,Y]_{\theta} \coloneqq \{ f(\theta): f \in \mathcal{J}(X,Y) \}
			\]
		endowed with the norm
			\[
				\norm{x}_{\theta} \coloneqq \inf \{ \norm{f}_{\mathcal{J}(X,Y)}: f \in \mathcal{J}(X,Y), f(\theta) = x \}.
			\]
	\end{definition}
	
	For a sectorial operator with bounded imaginary powers one can identify its fractional domains with complex interpolation spaces up to isomorphisms. In the next result we give the following quantitative version: if the bounded imaginary powers are uniformly bounded, then the identification holds with a uniform constant.
	
	We make the following agreement: for a sectorial operator $A$ on some Banach space $X$ we use the norm $\norm{x}_{D(A)} \coloneqq \norm{Ax}_X$.
	
	\begin{proposition}\label{prop:bip_fractional}
		Let $(A_j, D(A_j))_{j \in J}$ be a family of invertible sectorial operators on some Banach space $X$. Suppose further that the operators $A_j$ have bounded imaginary powers with
			\[
				\normalnorm{A_j^{it}} \le M e^{\omega \abs{t}} \qquad \text{for all }  t \in \IR \text{ and } j \in J
			\]
		for some constants $\omega \ge 0$ and $M \ge 0$. Then for all $\theta \in (0,1)$ there exists a constant $C > 0$ with
			\[
				C^{-1} \norm{x}_{D(A_j^{\theta})} \le \norm{x}_{[X,D(A_j)]_{\theta}} \le C \norm{x}_{D(A_j^{\theta})} \quad \text{for all } x \in D(A_j^{\theta}) \text{ and } j \in J.
			\]
	\end{proposition}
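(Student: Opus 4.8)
The classical (non-uniform) identification $[X,D(A)]_\theta \cong D(A^\theta)$ for a sectorial operator with bounded imaginary powers is a standard theorem (see e.g. Triebel or Haase). My plan is to revisit its proof and track the constants, showing that they depend only on $M$, $\omega$, $\theta$ and the sectoriality constant — quantities which are uniform over the family $(A_j)_{j\in J}$ by hypothesis. I would split this into the two inclusions separately.

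For the embedding $[X,D(A_j)]_\theta \hookrightarrow D(A_j^\theta)$ I would use the complex interpolation method directly: given $x \in [X,D(A_j)]_\theta$ pick $f \in \mathcal J(X,D(A_j))$ with $f(\theta)=x$ and $\norm{f}_{\mathcal J} \le 2\norm{x}_{[X,D(A_j)]_\theta}$. Recalling that on $D(A_j)$ we use the norm $\norm{y}_{D(A_j)} = \norm{A_j y}$, the function $g(z) \coloneqq A_j^{z} f(z)$ is (after the usual regularization to make it bounded on $\overline S$, inserting a factor $e^{\delta z^2}$ and letting $\delta \to 0$) analytic on $S$ with $\norm{g(it)}_X = \norm{A_j^{it} f(it)}_X \le M e^{\omega\abs t}\norm{f}_{\mathcal J}$ and $\norm{g(1+it)}_X = \norm{A_j^{it} A_j f(1+it)}_X \le M e^{\omega\abs t}\norm{f}_{\mathcal J}$; by the three lines theorem (in its version with exponential growth on the boundary, absorbed by the $e^{\delta z^2}$ trick) one gets $\norm{A_j^\theta x}_X = \norm{g(\theta)}_X \le C(M,\omega,\theta) \norm{f}_{\mathcal J} \le 2C(M,\omega,\theta)\norm{x}_{[X,D(A_j)]_\theta}$, and the constant manifestly does not depend on $j$.

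For the reverse embedding $D(A_j^\theta) \hookrightarrow [X,D(A_j)]_\theta$ I would, given $x \in D(A_j^\theta)$, construct an explicit admissible function realizing $x$ at the point $\theta$. The natural candidate is $f(z) \coloneqq A_j^{\theta - z} x = e^{(\theta-z)\log A_j}$ applied to $x$, which satisfies $f(\theta) = x$, $f(it) = A_j^{\theta-it}x = A_j^{it}(A_j^\theta x)$ wait — more carefully $f(it)=A_j^{\theta}A_j^{-it}x$ has $\norm{f(it)}_X \le M e^{\omega\abs t}\norm{A_j^\theta x}_X$, and $f(1+it) = A_j^{\theta-1-it}x$, so $A_j f(1+it) = A_j^{\theta-it}x = A_j^{-it}(A_j^\theta x)$, giving $\norm{f(1+it)}_{D(A_j)} \le M e^{\omega \abs t}\norm{A_j^\theta x}_X$; again one first checks $f$ maps into $X+Y$, is analytic, and then modifies it by $e^{\delta(z-\theta)^2}$ to obtain a genuine element of $\mathcal J(X,D(A_j))$, whose norm is $\le C(M,\omega,\theta)\norm{A_j^\theta x}_X$, uniformly in $j$. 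Evaluating at $\theta$ yields $\norm{x}_{[X,D(A_j)]_\theta} \le C(M,\omega,\theta)\norm{x}_{D(A_j^\theta)}$.

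The only genuinely delicate point — and the one I would expect to be the main obstacle — is making the exponential boundary growth $M e^{\omega\abs t}$ compatible with the rigid definition of $\mathcal J(X,Y)$, which demands \emph{bounded} boundary traces: one must verify that the regularized functions $f_\delta(z) = e^{\delta(z-\theta)^2} f(z)$ genuinely lie in $\mathcal J(X,Y)$ (boundedness on all of $\overline S$, continuity up to the boundary, analyticity in $S$), that $\delta$ can be chosen as a fixed small number depending only on $\omega$ so that the Gaussian kills the exponential, and that the resulting bound on $\norm{f_\delta}_{\mathcal J}$ is then uniform in $j$. Since all the relevant constants ($M$, $\omega$, the sectoriality bound entering the density-of-range and closedness arguments for $A_j^\theta$) are assumed uniform over $J$, the constant $C$ produced by this argument is uniform, which is exactly the quantitative refinement claimed. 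One should also note at the outset that $A_j^\theta$ is itself invertible sectorial with uniformly bounded imaginary powers $\normalnorm{(A_j^\theta)^{it}} = \normalnorm{A_j^{i\theta t}} \le M e^{\omega\theta\abs t}$, which guarantees $D(A_j^\theta)$ with the norm $\norm{A_j^\theta\cdot}$ is a Banach space continuously embedded in $X$, so that both sides of the claimed inequality make sense.
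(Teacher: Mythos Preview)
Your proposal is correct and follows essentially the same route as the paper, which likewise just revisits the classical proof (citing \cite[Theorem~6.6.9]{Haa06}) and tracks the dependence of the constants on $M$, $\omega$ and $\theta$; in particular the paper uses exactly your candidate function $f_j(z)=e^{(z-\theta)^2}A_j^{-z}A_j^{\theta}x$ for the embedding $D(A_j^{\theta})\hookrightarrow [X,D(A_j)]_{\theta}$, simply taking $\delta=1$ since $e^{-t^2}e^{\omega\abs{t}}$ is bounded for any $\omega$.

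One small technical point worth tightening in your first direction: forming $g(z)=A_j^{z}f(z)$ for an arbitrary $f\in\mathcal{J}(X,D(A_j))$ is not well-defined, since $f(z)$ need only lie in $X$ and $A_j^{z}$ is unbounded for $\Re z>0$; the Gaussian regularization does not help here. The paper handles this (as does Haase) by first restricting to $x\in D(A_j)$ and to a norming subset $\mathcal{V}_j\subset\mathcal{J}(X,D(A_j))$ of functions with values in $D(A_j)$, and then passing to general $x\in D(A_j^{\theta})$ by density---you should make this step explicit.
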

	\begin{proof}
		This is a consequence of the classical proof. For the sketchy proof given here we follow the lines and notation in~\cite[Theorem~6.6.9]{Haa06}. For given $x \in D(A_j^{\theta})$ the functions $f_j(z) = e^{(z-\theta)^2}  A_j^{-z} A_j^{\theta}x$ lie in $\mathcal{J}(X, D(A_j))$ and satisfy
		\begin{align*}
			\norm{f_j(it)}_X & \le M e^{\theta^2} e^{-t^2} e^{\omega \abs{t}} \normalnorm{A_j^{\theta} x}_X \\
			\norm{f_j(1+it)}_{D(A_j)} & \le M e^{(1-\theta)^2} e^{-t^2} e^{\omega \abs{t}} \normalnorm{A_j^{\theta} x}_X.
		\end{align*}
		This shows the uniformity of the second estimate. The converse inequality is shown for all $x$ in the set $D(A_j)$ and then follows by density. In fact, for all $f_j$ with $f_j(\theta) = x$ in a set $\mathcal{V}_j \subset \mathcal{J}(X,D(A_j))$ that satisfies $\norm{x}_{[X,D(A_j)]_{\theta}} = \inf_{f_j \in \mathcal{V}_j} \norm{f_j}_{\mathcal{J}(X,D(A_j))}$ one shows that
		\begin{align*}
			\MoveEqLeft \norm{x}_{D(A_j^{\theta})} \le M \sup_{s \in \IR} \max \{ e^{\theta^2} e^{-s^2} e^{\omega \abs{s}} \norm{f_j(is)}_X, e^{(1-\theta)^2} e^{-s^2} e^{\omega \abs{s}} \norm{f_j(1 + is)}_{D(A)} \}.
		\end{align*}
		This shows that for some constant $C \ge 0$ independent of $j \in J$ and $f_j \in \mathcal{V}_j$ one has $\norm{x}_{D(A_j^{\theta})} \le C \norm{f_j}_{\mathcal{J}(X, D(A_j))}$ and consequently $\norm{x}_{D(A_j^{\theta})} \le \norm{x}_{[X,D(A_j^{\theta})]}$.
	\end{proof}
	
\subsection{Second order elliptic problems in divergence form}

	In the main part we will consider realizations of differential operators of the form
		\begin{equation}
			\label{eq:elliptic_operator}
			Lu = -\divv(A\nabla u + a \cdot \nabla u) + b \cdot \nabla u + c_0 u
		\end{equation}
	with bounded measurable real coefficients $A = (a_{ij})$, $a = (a_i)$, $b = (b_i)$ and $c_0$ subject to Dirichlet, Neumann or Robin boundary conditions, i.e.\ $\mathcal{B}u = 0$ on $\partial \Omega$ for some sufficiently regular domain $\Omega \subset \IR^N$, where 
		\begin{equation*}
			\mathcal{B}u = \begin{cases}
				u & \text{for Dirichlet}, \\
				\sum_{i=1}^N (\sum_{j=1}^N a_{ij} \partial_j u) + a_i u \nu_i & \text{for Neumann}, \\
				\sum_{i=1}^N (\sum_{j=1}^N a_{ij} \partial_j u) + a_i u \nu_i + \beta u & \text{for Robin}.
			\end{cases}
		\end{equation*}

\subsubsection{The form method}\label{sec:form_method}
	
	We begin with introducing the form method. For further details and proofs we refer to~\cite{Ouh05}. Let $V$ be a real Hilbert space and let $a\colon V \times V \to \IR$ be a \emph{form}, i.e.\ a bilinear mapping $V \times V \to \IR$. Suppose further that $V$ is densely embedded in a second Hilbert space $H$. The form $a$ induces an unbounded operator on $H$ defined as
	\begin{align*}
		D(A) & = \{ u \in V: \exists f \in H: a(u,v) = (f | v)_{H} \text{ for all } v \in V \}, \\
		Au & = f.
	\end{align*}
	Suppose further that for all $u, v \in V$ one has
	\begin{align*}
		\abs{a(u,v)} &\le M \norm{u}_V \norm{v}_V, \qquad a(u,u) \ge \alpha_0 \norm{u}_V^2
	\end{align*}
	for some positive constants $\alpha_0$ and $M$. Then one can show that the operator $-A$ generates an analytic semigroup on the complexification of $H$. Further, the analyticity angle of the generated semigroup depends only on upper bounds for $M$ and $\alpha_0^{-1}$. If we replace $a$ by the form $b = a + \omega(\cdot | \cdot)_H$ for some $\omega \in \IR$, then the operator associated to $b$ is given by $A + \omega$. Hence, we may apply the above generation result to shifted forms if these satisfy the above estimates. %
	
	If $H = L^2(\Omega)$, we want to extrapolate the operator $A_2 = A$ given by the form $a$ to the spaces $L^p(\Omega)$. Of course, in general this is not possible. However, suppose that the semigroup $(T_2(t))_{t \ge 0}$ generated by $-A_2$ leaves $L^p(\Omega) \cap L^2(\Omega)$ invariant and satisfies $\norm{T_2(t)f}_p \le C \norm{f}_p$ for some $C \ge 0$ for all $t \in [0,1]$ and all $f \in L^p(\Omega) \cap L^2(\Omega)$. Then, provided $p \in [1, \infty)$, $(T_2(t))_{t \ge 0}$ induces a compatible $C_0$-semigroup $(T_p(t))_{t \ge 0}$ whose generator we denote by $-A_p$. Clearly, one has $A_p = A_2$ on the intersection of their domains. In practice, the invariance can be checked directly on the form. The most simple case is here the invariance of $L^2(\Omega) \cap L^{\infty}(\Omega)$. Moreover, positive results can be obtained by a more sophisticated approach that is able to treat the case $p \in (1, \infty)$ directly~\cite{Nit12}.

\subsubsection{The form method for elliptic operators}\label{sec:form_elliptic_operator}
	
	Let $\Omega$ be a domain in $\IR^N$. Coming back to the elliptic operator~\eqref{eq:elliptic_operator}, for Dirichlet and Neumann boundary conditions we introduce the form
		\begin{equation}
			\label{eq:form_dn}
			a_0(u,v) = \sum_{i=1}^N \int_{\Omega} \biggl( \sum_{j=1}^N a_{ij} \partial_j u + a_i u \biggr) \partial_i v + \int_{\Omega} \biggl( \sum_{i=1}^N b_i \partial_i u + c_0 u \biggr) v,
		\end{equation}
	where the form domain is $W_0^{1,2}(\Omega)$ in the Dirichlet and $W^{1,2}(\Omega)$ in the Neumann case. As always from now on, we assume that all the above coefficients bounded measurable real functions. Further, we assume that the ellipticity condition
		\begin{equation*}
			\label{eq:ellipticity_condition}
			\tag{E}
			\sum_{i,j=1}^N a_{ij}(x) \xi_i \xi_j \ge \alpha_0 \abs{\xi}^2 \qquad \text{for } \xi \in \IR^N
		\end{equation*}
	holds almost everywhere for some $\alpha_0 > 0$. In the case of Robin boundary conditions we use the form
		\begin{equation}
			\label{eq:form_robin}
			a_1(u,v) = a_0(u,v) + \int_{\partial \Omega} \beta u v \d\mathcal{H}_{N-1},
		\end{equation}
	where $\mathcal{H}_{N-1}$ is the $N-1$-dimensional Hausdorff measure on $\partial \Omega$ and $\beta$ is a bounded measurable function on $\partial \Omega$. We will always assume in the Robin case that $\Omega$ is a bounded Lipschitz domain. It is known that in this case $\mathcal{H}_{N-1}$ coincides with the surface measure on $\partial \Omega$~\cite[Proposition~12.9]{Tay06} and that $W^{1,2}(\Omega)$ is the appropriate form domain.
	
	In all these cases, by the general form theory, the form induces an operator $A_2$ and $-A_2$ generates an analytic semigroup. Moreover, one can show that for all $p \in (1, \infty)$ these semigroups leave $L^2(\Omega) \cap L^p(\Omega)$ invariant and therefore induce compatible generators $-A_p$ as described in the last subsection (see~\cite{Dan00} and~\cite{Ouh05}). 

\subsection{Interpolation and extrapolation scales}

	We now give an introduction to the concept of interpolation and extrapolation scales. These are closely related to the study of weak solutions of PDEs. We only present those parts of the theory necessary for our purposes. A complete development of the theory can be found in \cite[Chapter~V]{Ama95}.

\subsubsection{The power scale}
			
	Let $A$ be an invertible sectorial operator on some Banach space $X$. As a first step we define a scale of Banach spaces associated to $X$ and $A$.
	
	For $n \in \IN$ we endow $D(A^n)$ with the norm $\norm{x}_{D(A^n)} = \norm{A^n x}_X$. This norm is equivalent to the graph norm of $A$ and we obtain a Banach space denoted by $X_{n,A}$. Further for negative integer values, let $\norm{x}_{-n} = \norm{A^{-n} x}_X$. Then $(X, \norm{\cdot}_{-n})$ is a normed vector space. By choosing compatible completions $X_{-n,A}$ and denoting $X_{0,A} \coloneqq X$, we obtain for each $m \in \IN$ a family of Banach spaces $(X_{n,A})_{n \ge -m}$ with
		\[
			\cdots \xhookrightarrow{d} X_{2,A} \xhookrightarrow{d} X_{1,A} \xhookrightarrow{d} X \xhookrightarrow{d} X_{-1,A} \xhookrightarrow{d} \cdots \xhookrightarrow{d} X_{-m+1,A} \xhookrightarrow{d} X_{-m,A}.	
		\]
	Further, if $X$ is reflexive, which we assume from now on, one can extend this scale to the whole range of integers by using duality arguments~\cite[Theorem~V.1.4.9]{Ama95}. %
	For our further needs the finite version will be sufficient.

	As a second step we extend the action of $A$ to $X_{n,A}$. By definition, for $n \ge 0$, the map $x \mapsto Ax$ is an isometric isomorphism from $X_{n+1,A}$ to $X_{n,A}$. Similarly, for $n \le 0$, $A\colon D(A) \mapsto X_{-(n+1),A}$ extends to an isometric isomorphism from $X_{-n,A}$ onto $X_{-(n+1),A}$. All these extensions are compatible. To distinguish between the actions of $A$ on the different spaces, we denote for $n \in \IZ$ the action of $A$ as an element of $\mathcal{B}(X_{n+1,A}, X_{n,A})$ by $A_n$. Furthermore, we see $A_n$ as an unbounded operator on $X_{n,A}$ with domain $X_{n+1,A}$. Using these agreements, the original operator $A$ coincides with $A_0$. Moreover, one can show that for all $n \in \IZ$ one has $\rho(A_n) = \rho(A)$ and that $A_n$ satisfies the exact same spectral estimates as $A$ does~\cite[Lemma~V.13.7]{Ama95}. We call the family $(X_{n,A}, A_n)_{n \in \IZ}$ the \emph{power scale} associated to $A$.

	As a third step we study duality properties. Recall that for a densely defined operator $A$ on $X$ its adjoint is the unbounded operator on the dual $X'$ defined by
	\begin{align*}
		D(A') & = \{ x' \in X': \exists y' \in X': \langle x', Ax \rangle = \langle y', x \rangle \text{ for all } x \in D(A) \} \\
		A'x' & = y'.
	\end{align*}
	Suppose again that $A$ is an invertible sectorial operator on a reflexive Banach space $X$. Then its adjoint $A'$ is an invertible sectorial operator on $X'$ with $\rho(A') = \rho(A)$. Therefore one can define the scale $(X_{n,A'})_{n \in \IZ}$ associated to $A'$. For each $n \in \IZ$ this induces compatible pairings
	\[
		X_{n,A} \times X'_{-n,A'} \ni (x,x') \mapsto \langle A^n x, (A_{-n}')^{-n} x' \rangle_{X,X'}.
	\]
	Further, the spaces $X_{n,A}$ are reflexive and one has $(X_{n,A})' \simeq X'_{-n,A'}$ and $(A_n)' = A'_{-n}$ with respect to the above pairing~\cite[Theorem~V.1.4.9]{Ama95}.

\subsubsection{Interpolation-Extrapolation scales}

	Let $A$ be an invertible sectorial operator on a reflexive Banach space $X$. In this section we extend the power scale associated to $A$ to the real numbers via interpolation theory. For this notice that $(X_{n,A}, X_{n+1,A})_{n \in \IZ}$ is a Banach couple for $n \in \IZ$. Hence, we can apply interpolation functors to these couples. To simplify notation and our presentation, we make the following agreement.
	
	\begin{convention}
		From now on let $(\cdot, \cdot)_{\theta}$ either be the family of complex interpolation functors $[\cdot,\cdot]_{\theta}$ or or the family of real interpolation functors $(\cdot,\cdot)_{\theta,q}$ for some $q \in (1, \infty)$.
	\end{convention}
	
	\begin{remark}
		Note that some of the following results and methods can be generalized to general regular interpolation functors, varying methods in $\theta$ or even certain abstract Banach scales, like the fractional power scale. 
		For a more general treatment of interpolation-extrapolation scales we refer to~\cite[Section~V.1.5]{Ama95}.
	\end{remark}
	
	\begin{definition}
		For $\alpha \in \IR$ and $n \in \IZ$ with $n < \alpha < n + 1$ set
			\[
				X_{\alpha,A} = (X_{n,A}, X_{n+1,A})_{\alpha - n} \qquad \text{and} \qquad A_{\alpha} = \text{part of } A_{n} \text{ in } X_{\alpha,A}.
			\]
		The family $(X_{\alpha,A}, A_{\alpha})_{\alpha \in \IR}$ is called the \emph{interpolation-extrapolation scale} associated to $A$.
	\end{definition}
	
	Loosely spoken, the properties of the power scale pass to the interpolation-extrapolation scale. In fact, for $\alpha < \beta$ one obtains dense natural embeddings $X_{\beta,A} \xhookrightarrow{} X_{\alpha,A}$ compatible with the actions of the operators $A_{\alpha}$. Furthermore for all $\alpha \in \IR$ one has $\rho(A_{\alpha}) = \rho(A)$ and $A_{\alpha}$ satisfies the same sectorial estimates as $A$~\cite[Proposition~1.5.5]{Ama95}. 
	
	Since $A$ is sectorial, there exist $\varphi \in (0, \frac{\pi}{2})$ and $C \ge 0$ with
		\[
			\norm{R(\lambda,A)}_{\mathcal{B}(X,X)} \le C \abs{\lambda}^{-1} \qquad \text{and} \qquad \norm{R(\lambda,A)}_{\mathcal{B}(X,D(A))} \le C
		\]
	for all $\lambda \not\in \overline{\Sigma_{\varphi}}$. Interpolating these estimates, we obtain for $\alpha \in (0,1)$ and $\lambda \not\in \overline{\Sigma_{\varphi}}$
		\begin{equation}
			\label{eq:resolvent_estimate}
			\norm{R(\lambda, A)}_{\mathcal{B}(X, X_{\alpha,A})} \le C \abs{\lambda}^{\alpha - 1}.
		\end{equation}
	Concerning duality, if one sets for $\theta \in (0,1)$
		\[
			(\cdot, \cdot)^{'}_{\theta} = \begin{cases}
				[\cdot,\cdot]_{\theta} & \text{if } (\cdot,\cdot)_{\theta} = [\cdot,\cdot]_{\theta}, \\
				(\cdot,\cdot)_{\theta, q'} & \text{if } (\cdot,\cdot)_{\theta} = (\cdot,\cdot)_{\theta, q},
			\end{cases}
		\]
	where $q'$ denotes the Hölder conjugate of $q$, and let $(X'_{\alpha,A'}, A'_{\alpha})_{\alpha \in \IR}$ be the scale for the functor $(\cdot,\cdot)'_{\theta}$, then $X_{\alpha,A}$ is reflexive for all $\alpha \in \IR$, $(X_{\alpha,A})' \simeq X'_{-\alpha,A'}$ and $(A_{\alpha})' = A'_{-\alpha}$ with respect to the duality pairing induced by $\langle \cdot, \cdot \rangle_{X,X'}$ \cite[Theorem~V.1.5.12]{Ama95}.

\subsection{Acquistapace--Terreni condition}\label{sec:at}

	In this section we present the connection between the Acquistapace--Terreni condition and maximal regularity. This is well-known and can be found in~\cite{HieMon00b}. However, the literature on this topic tends to be very sketchy and we feel that the reader can benefit from a complete presentation.
			
	For a family of sectorial operators $(A(t))_{t \in [0,T]}$ on some Banach space $X$ we denote by $L^{\infty}([0,T];D(A(t)))$  the space of all measurable functions $f\colon [0,T] \to X$ for which $f(t) \in D(A(t))$ for almost all $t \in [0,T]$ and $A(\cdot)f(\cdot) \in L^{\infty}([0,T];X)$. Then the following density result holds.
		
	\begin{lemma}\label{lem:inhom_approximation}
		Let $(A(t))_{t \in [0,T]}$ be a family of sectorial operators on some Banach space $X$ for which there exist $\varphi \in (0, \frac{\pi}{2})$ and $C > 0$ with
			\[
				\sup_{t \in [0,T]} \sup_{\lambda \not\in \overline{\Sigma_{\varphi}}} \norm{\lambda R(\lambda, A(t))} \le C.
			\]
		If $t \mapsto R(\lambda, A(t))$ is strongly measurable for all $\lambda < 0$, then $L^{\infty}([0,T]; D(A(t)))$ is dense in $L^p([0,T];X)$ for all $p \in [1, \infty)$.
	\end{lemma}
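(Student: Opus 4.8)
The plan is to reduce the claim to the well-understood autonomous case by a step-function approximation argument, exploiting that $\rho(A(t))$ contains a fixed negative ray uniformly and that on that ray the resolvents are uniformly bounded. First I would fix $p \in [1,\infty)$ and an arbitrary $f \in L^p([0,T];X)$ together with $\epsilon > 0$, and observe that since step functions with values in $X$ are dense in $L^p([0,T];X)$, it suffices to approximate a single function of the form $g = \mathds{1}_{E} x_0$ for a measurable set $E \subset [0,T]$ and a fixed vector $x_0 \in X$ (and then take finite linear combinations); in fact, by a further reduction, it is enough to handle $g = \mathds{1}_{[0,T]} x_0$, i.e.\ a constant function, because indicator functions of measurable sets are themselves $L^p$-approximable by functions constant on small subintervals and one multiplies by $\mathds{1}_E$ at the end (the class $L^\infty([0,T];D(A(t)))$ is stable under multiplication by bounded scalar functions since $A(\cdot)(\varphi(\cdot)h(\cdot)) = \varphi(\cdot)A(\cdot)h(\cdot)$).

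Next I would build the approximant. For $\lambda < 0$ fixed, set $h_n(t) \coloneqq \lambda R(\lambda, A(t)) x_0$. Then $h_n$ is strongly measurable by hypothesis (the map $t \mapsto R(\lambda,A(t))$ is strongly measurable, so $t \mapsto R(\lambda, A(t))x_0$ is strongly measurable), it is bounded by $\sup_t \norm{\lambda R(\lambda,A(t))}\,\norm{x_0} \le C\norm{x_0}$, hence $h_n \in L^\infty([0,T];X)$, and crucially $h_n(t) = \lambda R(\lambda,A(t))x_0 \in D(A(t))$ for every $t$ with $A(t)h_n(t) = \lambda A(t)R(\lambda,A(t))x_0 = \lambda^2 R(\lambda,A(t))x_0 - \lambda x_0$, which is again bounded in $t$ by the uniform resolvent bound. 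Therefore $h_n \in L^\infty([0,T];D(A(t)))$. It remains to see that, letting $\lambda = \lambda_n \to -\infty$, we get $h_n(t) \to x_0$ in $X$ for each $t$: this is the standard fact that for a sectorial (equivalently, bounded-analytic-semigroup-generating) operator $A$ one has $\lambda R(\lambda, A)y \to y$ as $\lambda \to -\infty$ for every $y$, which holds because $A$ is densely defined and the family $\lambda R(\lambda,A)$ is uniformly bounded along the negative ray. Then dominated convergence (the dominating function is the constant $C\norm{x_0} \in L^p([0,T])$ since $T < \infty$) gives $h_n \to \mathds{1}_{[0,T]}x_0$ in $L^p([0,T];X)$.

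Assembling these pieces: given a general $f$, pick a step function $s = \sum_{k} \mathds{1}_{E_k} x_k$ with $\norm{f - s}_{L^p(X)} < \epsilon/2$; for each $k$ approximate the constant function $\mathds{1}_{[0,T]}x_k$ by some $h^{(k)}_{n_k} \in L^\infty([0,T];D(A(t)))$ as above within $\epsilon/(2 m)$ in $L^p$, where $m$ is the number of terms; then $\sum_k \mathds{1}_{E_k} h^{(k)}_{n_k}$ lies in $L^\infty([0,T];D(A(t)))$ (finite sums and multiplication by bounded indicators preserve this class) and is within $\epsilon$ of $f$.

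The main obstacle, such as it is, is the measurability bookkeeping: one must confirm that $t \mapsto A(t)h_n(t)$ is genuinely strongly measurable (not merely pointwise defined), which follows from writing it as $\lambda_n^2 R(\lambda_n, A(t))x_0 - \lambda_n x_0$ and using the assumed strong measurability of $t \mapsto R(\lambda_n, A(t))$; and one must make sure the pointwise convergence $\lambda_n R(\lambda_n, A(t))x_0 \to x_0$ is uniform enough in $t$ — here it need not be uniform, since dominated convergence in $L^p([0,T];X)$ only needs pointwise a.e.\ convergence plus an $L^p$ dominating function, both of which are in hand. No quantitative rate and no analyticity beyond sectoriality is needed.
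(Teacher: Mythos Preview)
Your proof is correct and rests on the same mechanism as the paper's: approximate by the resolvent regularization $t \mapsto \lambda_n R(\lambda_n,A(t))\,(\cdot)$ with $\lambda_n \to -\infty$, use the uniform bound $\sup_t\norm{\lambda R(\lambda,A(t))} \le C$ for domination, and invoke pointwise convergence $\lambda R(\lambda,A)y \to y$ plus dominated convergence in $L^p$.

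The only real difference is organizational. You first reduce to step functions $\sum_k \mathds{1}_{E_k} x_k$, then approximate each constant piece $\mathds{1}_{[0,T]}x_k$ by $t \mapsto \lambda_n R(\lambda_n,A(t))x_k$, and finally reassemble using that $L^\infty([0,T];D(A(t)))$ is stable under multiplication by bounded scalar functions. The paper skips this reduction entirely: starting from an arbitrary $f \in L^\infty([0,T];X)$ (already dense in $L^p$), it sets $f_n(t) \coloneqq -nR(-n,A(t))f(t)$ directly, checks $f_n \in L^\infty([0,T];D(A(t)))$ via $A(t)f_n(t) = -n f(t) - n^2 R(-n,A(t))f(t)$, and runs dominated convergence with the bound $\norm{f_n(t)} \le C\norm{f(t)}$. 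Your step-function detour is harmless but unnecessary; applying the resolvent pointwise to $f(t)$ itself already produces an element of $L^\infty([0,T];D(A(t)))$, so nothing is gained by first passing to simple functions. The measurability of $t \mapsto R(\lambda,A(t))f(t)$ for measurable $f$ follows from strong measurability of $t \mapsto R(\lambda,A(t))$ by a standard pointwise-limit argument, so the paper's direct route needs no extra hypotheses beyond what you already use.
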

	\begin{proof}
		It suffices to show that functions in $L^{\infty}([0,T];X)$ can be approximated by elements of $L^{\infty}([0,T];D(A(t)))$ in $L^p([0,T];X)$. Recall that for a sectorial operator $A$ one has $-nR(-n,A)x \to x$ as $n \to \infty$ for all $x \in X$. Now, let $f \in L^{\infty}([0,T];X)$ and set $f_n(t) = -nR(-n,A(t))f(t)$ for $n \in \IN$. Then for all $n \in \IN$ the function $f_n\colon [0,T] \to X$ is measurable and satisfies $f_n(t) \in D(A(t))$ almost everywhere as well as
		\[
			\norm{A(t)f_n(t)}_X \le (1+C)n \norm{f(t)}_X.
		\]
		Moreover, one has $f_n(t) \to f(t)$ and $\norm{f_n(t)}_X \le C \norm{f(t)}_X$ almost everywhere. Hence, the convergence $f_n \to f$ in $L^p([0,T];X)$ holds by dominated convergence.
	\end{proof}
	
	Now suppose that $(A(t))_{t \in [0,T]}$ is a family of sectorial operators on some Banach space $X$ satisfying the following conditions.
	\begin{thm_enum}
		\item For some $\varphi \in (0,\frac{\pi}{2})$ and $C \ge 0$ one has $\sigma(A(t)) \subset \overline{\Sigma_{\varphi}}$ for all $t \in [0,T]$ and
			\begin{equation}
				\label{eq:uniform_sectorial}
				\sup_{t \in [0,T]} \sup_{\lambda \not\in \overline{\Sigma_{\varphi}}} \norm{(1 + \abs{\lambda}) R(\lambda, A(t))} \le C.
			\end{equation}
		\item There exist constants $0 \le \gamma < \beta \le 1$ and $K \ge 0$ such that for all $t,s \in [0,T]$ and all $\lambda \not\in \overline{\Sigma_{\varphi}}$
			\begin{equation*}
				\label{eq:at}
				\tag{AT}
				\normalnorm{A(t)R(\lambda,A(t))(A(t)^{-1} - A(s)^{-1})}_{\mathcal{B}(X)} \le K \frac{\abs{t-s}^{\beta}}{1+\abs{\lambda}^{1-\gamma}}.
			\end{equation*}
	\end{thm_enum}
	The resolvent estimate~\eqref{eq:at} goes back to the work~\cite{AcqTer87}. Not indicating the involved parameters, we therefore call the operator
	\[
		L = A(t)R(\lambda, A(t))(A(t)^{-1} - A(s)^{-1})
	\]
	the \emph{Acquistapace--Terreni} operator for the parameters $t,s \in [0,T]$. Note that $A(t)R(\lambda, A(t))$ is a bounded operator on $X$ and consequently one has $L \in \mathcal{B}(X)$. Let us discuss some immediate consequences of the above assumptions. First, \eqref{eq:uniform_sectorial} implies $0 \in \rho(A(t))$ for all $t \in [0,T]$ together with the uniform estimate $\normalnorm{A^{-1}(t)} \le C$. Applying the operator $(\lambda - A(t)) A(t)^{-1} = \lambda A^{-1}(t) - \Id$ to the left hand side of~\eqref{eq:at} for some $\lambda \not\in \overline{\Sigma}_{\varphi}$ we get
	\[
		\normalnorm{A(t)^{-1} - A(s)^{-1}} \lesssim \abs{t-s}^{\beta}.
	\]
	A fortiori, $t \mapsto A(t)^{-1}$ is measurable. Now, using the analytic expansion of the resolvent~\cite[Chapter~IV, Proposition~1.3(i)]{EngNag00}
	\[
		R(\lambda, A(t)) = -\sum_{n=0}^{\infty} A(t)^{-(n+1)} \lambda^n,
	\]
	which holds for all $\abs{\lambda} < C^{-1}$ independently of $t$, we see that $t \mapsto R(\lambda, A(t))$ is measurable for all $\abs{\lambda} < C^{-1}$. Proceeding with this argument for different balls in the complex plane, we get the measurability of $(t, \lambda) \mapsto R(\lambda, A(t))$. 
	
	Now, let $f \in L^{\infty}([0,T];D(A(t)))$ and $u_0 \in X$. Unwinding the definitions used in the formulation of~\cite[Theorem~6.6]{AcqTer87}, we obtain that~\eqref{nacp} has a classical solution $u$ that satisfies
	\begin{align*}
		\MoveEqLeft A(t)u(t) = \int_0^t A(t)^2 e^{-(t-s) A(t)} (A(s)^{-1} - A(t)^{-1}) A(s) u(s) \d s \\
		& + \int_0^t A(t) e^{-(t-s) A(t)} f(s) \d s + A(t) e^{-t A(t)}u_0.
	\end{align*}
	Introducing formally operators $R,S,Q$ whose mapping properties we study soon as
	\begin{align*}
		(Ru_0)(t) & = A(t)e^{-tA(t)}u_0, \\
		(Sf)(t) & = \int_0^t A(t)e^{-(t-s) A(t)} f(s) \d s, \\
		(Qg)(t) & = \int_0^t A(t)^2 e^{-(t-s) A(t)} (A(s)^{-1} - A(t)^{-1}) g(s) \d s,
	\end{align*} 
	the above equation becomes
	\begin{equation*}
		(\Id - Q)(A(\cdot)u(\cdot))(t) = (Sf)(t) + (Ru_0)(t).
	\end{equation*}
	We first deal with the operator $Q$ and show that $\Id - Q\colon L^p([0,T];X) \to L^p([0;T];X)$ is invertible for $p \in [1,\infty)$ provided the constant in~\eqref{eq:at} is small enough. 
	
	\begin{proposition}\label{prop:invertible}
		Let $p \in [1, \infty)$ and $(A(t))_{t \in [0,T]}$ be a family of sectorial operators satisfying~\eqref{eq:uniform_sectorial}. Suppose additionally that $(A(t))_{t \in [0,T]}$ satisfies~\eqref{eq:at} for some sufficiently small $K > 0$. Then $\Id - Q\colon L^p([0,T];X) \to L^p([0,T];X)$ is invertible. 
	\end{proposition}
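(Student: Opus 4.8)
The plan is to prove that, under \eqref{eq:uniform_sectorial} and \eqref{eq:at}, the operator $Q$ is bounded on $L^p([0,T];X)$ with operator norm at most $C_0 K$, where $C_0$ depends only on $\varphi$, $\beta$, $\gamma$ and $T$ but not on $K$. Granting this, for $K$ so small that $C_0 K < 1$ the operator $\Id - Q$ is invertible and its inverse is given by the Neumann series $(\Id - Q)^{-1} = \sum_{n=0}^{\infty} Q^n$, which converges in $\mathcal{B}(L^p([0,T];X))$. So the whole statement reduces to a quantitative boundedness estimate for $Q$.

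The core of the argument is a pointwise operator-norm bound on the integral kernel of $Q$, that is, on $T(t,s) \coloneqq A(t)^2 e^{-(t-s)A(t)}\bigl(A(s)^{-1} - A(t)^{-1}\bigr) \in \mathcal{B}(X)$ for $0 \le s < t \le T$. I would fix an angle $\psi$ with $\varphi < \psi < \tfrac{\pi}{2}$ and let $\Gamma$ be the boundary of $\Sigma_{\psi}$, positively oriented with respect to $\Sigma_{\psi}$. Starting from the Cauchy representation of the bounded analytic semigroup generated by $-A(t)$, using $A(t)R(\lambda,A(t)) = \lambda R(\lambda,A(t)) - \Id$ together with $\int_{\Gamma}\lambda e^{-(t-s)\lambda}\,d\lambda = 0$ and $\int_{\Gamma} e^{-(t-s)\lambda}\,d\lambda = 0$ (both by direct integration, the endpoint terms vanishing since $\Re\lambda \to +\infty$ along $\Gamma$), and pulling the fixed bounded operator $A(s)^{-1} - A(t)^{-1}$ inside the Bochner integral, one rewrites
\[
	T(t,s) = \frac{1}{2\pi i}\int_{\Gamma} \lambda\, e^{-(t-s)\lambda}\, A(t)R(\lambda,A(t))\bigl(A(s)^{-1} - A(t)^{-1}\bigr)\,d\lambda.
\]
The point of this rearrangement is that the integrand is now $\lambda e^{-(t-s)\lambda}$ times the Acquistapace--Terreni operator for $(t,s)$, so \eqref{eq:at} bounds its norm by $K\abs{\lambda}\,\abs{e^{-(t-s)\lambda}}\,\abs{t-s}^{\beta}(1+\abs{\lambda}^{1-\gamma})^{-1}$. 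Parametrising the two rays of $\Gamma$ by $r \mapsto re^{\pm i\psi}$, using $\abs{e^{-(t-s)\lambda}} = e^{-(t-s)r\cos\psi}$ on $\Gamma$ and the elementary inequality $r(1+r^{1-\gamma})^{-1} \le r^{\gamma}$, one obtains
\[
	\norm{T(t,s)}_{\mathcal{B}(X)} \le \frac{K}{\pi}\,\abs{t-s}^{\beta}\int_0^{\infty} r^{\gamma} e^{-(t-s)r\cos\psi}\,dr = c_{\psi}\,K\,\abs{t-s}^{\beta-\gamma-1},
\]
where $c_{\psi}$ is a finite constant depending only on $\gamma$ and $\psi$, the $r$-integral converging at both ends, and the exponent $\beta - \gamma - 1$ being $> -1$ precisely by the hypothesis $\gamma < \beta$ in \eqref{eq:at}.

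It then remains to pass from this kernel bound to the norm bound for $Q$ and conclude. Since $\tau \mapsto \tau^{\beta-\gamma-1}$ is integrable on $[0,T]$ with $\int_0^T \tau^{\beta-\gamma-1}\,d\tau = T^{\beta-\gamma}/(\beta-\gamma)$, extending $g$ by zero outside $[0,T]$ and applying Young's convolution inequality to the scalar functions $t \mapsto \norm{g(t)}_X$ and $\tau \mapsto c_{\psi}K\tau^{\beta-\gamma-1}$ shows that $Q$ is well defined and bounded on $L^p([0,T];X)$ for every $p \in [1,\infty)$, with $\norm{Q}_{\mathcal{B}(L^p([0,T];X))} \le C_0 K$ for $C_0 \coloneqq c_{\psi}T^{\beta-\gamma}/(\beta-\gamma)$. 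The Neumann series argument of the first paragraph then finishes the proof.

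The only genuinely delicate point is the kernel estimate of the second paragraph: one must arrange the contour integral so that the Acquistapace--Terreni operator itself appears under it — rather than the $\lambda$-unbounded operator $A(t)^2 R(\lambda,A(t))$ in isolation — and then check that the surviving factor $\abs{\lambda}(1+\abs{\lambda}^{1-\gamma})^{-1}$, weighed against the semigroup decay $e^{-(t-s)\abs{\lambda}\cos\psi}$, produces exactly the locally integrable singularity $\abs{t-s}^{\beta-\gamma-1}$. Everything after that — the passage to $L^p$ via Young and the inversion via the Neumann series — is routine.
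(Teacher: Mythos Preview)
Your argument is correct and follows essentially the same route as the paper's: both represent the kernel via the contour integral $\frac{1}{2\pi i}\int_{\Gamma}\lambda e^{-(t-s)\lambda}\,A(t)R(\lambda,A(t))(A(s)^{-1}-A(t)^{-1})\,d\lambda$, invoke~\eqref{eq:at} together with $\abs{\lambda}(1+\abs{\lambda}^{1-\gamma})^{-1}\le\abs{\lambda}^{\gamma}$ to obtain the bound $cK\abs{t-s}^{\beta-\gamma-1}$, and then pass to $L^p$ by the $L^1\ast L^p$ convolution inequality (which the paper calls Minkowski's inequality for convolutions) before concluding with the Neumann series. The only cosmetic difference is that you justify the contour representation more explicitly via the vanishing auxiliary integrals, whereas the paper writes $A(t)e^{-(t-s)A(t)}$ directly as $\frac{1}{2\pi i}\int_{\Gamma} z e^{-(t-s)z}R(z,A(t))\,dz$ and leaves one factor of $A(t)$ outside.
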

	\begin{proof}
		We show that $\norm{Q} < 1$ provided $K$ is small enough. The assertion then follows from the Neumann series expansion. Choose $\psi \in (\varphi, \frac{\pi}{2})$. Then for $\Gamma = \partial \Sigma_{\psi}$, $g \in L^p([0,T];X)$ and $t \in [0,T]$ we may write
			\begin{align*}
				\MoveEqLeft \int_0^t A(t)^2 e^{-(t-s)A(t)} (A(s)^{-1} - A(t)^{-1}) g(s) \d s \\
				& = \frac{1}{2\pi i} \int_0^t A(t) \int_{\Gamma} ze^{-(t-s)z} R(z,A(t)) \d z \, (A(s)^{-1} - A(t)^{-1}) g(s) \d s
			\end{align*}
		Hence, using~\eqref{eq:at}, the norm of the above expression is bounded by
			\begin{align*}
				\MoveEqLeft \frac{K}{2\pi} \int_{0}^t \abs{t-s}^{\beta} \norm{g(s)} \int_{\Gamma} \frac{\abs{z}}{1+\abs{z}^{1-\gamma}} e^{-(t-s) \Re z} \d\abs{z} \d s \\
				& \le \frac{K}{2\pi} \int_0^t \abs{t-s}^{\beta} \norm{g(s)} \int_{\Gamma} \abs{z}^{\gamma} e^{-(t-s) \Re z} \, d\abs{z} \, ds \\
				& = \frac{C K}{2\pi} \int_0^t \abs{t-s}^{\beta - \gamma - 1} \norm{g(s)} \, \d s,
			\end{align*}
		where $C$ is a universal constant only depending on $\gamma$ and $\Gamma$. Consequently, it follows from Minkowski's inequality for convolutions that
			\begin{align*}
				\MoveEqLeft \biggl( \int_0^T \norm{(Qg)(t)}^p \d t \biggr)^{1/p} \le \frac{CK}{2\pi} \normalnorm{(\abs{\cdot}^{\beta - \gamma - 1} \mathds{1}_{[0,T]}) * (\norm{g} \mathds{1}_{[0,T]})}_{L^p(\IR)} \\
				& \le \frac{CK}{2\pi} \biggr( \int_0^{T} t^{\beta - \gamma - 1} \d t \biggr) \norm{g}_{L^p([0,T];X)}.
			\end{align*}
		Note that the integral in brackets is finite because of $\beta > \gamma$. 
	\end{proof}
	
	Suppose now additionally that $\Id - Q\colon L^p([0,T];X) \to L^p([0;T];X)$ is invertible and that $Sf + Ru_0$ lies in $L^p([0,T];X)$ for some $p \in (1, \infty)$. Taking the inverse, we get
	\[
		A(\cdot)u(\cdot) = (\Id - Q)^{-1}(Sf + Ru_0) \in L^p([0,T];X).
	\]
	Using that $u$ is a solution of~\eqref{nacp}, $f \in L^p([0,T];X)$ gives that $\dot{u} \in L^p([0,T];X)$ as well. Hence, $u$ lies in the maximal regularity space for $(A(t))_{t \in [0,T]}$. Further, if $S \in \mathcal{B}(L^p([0,T];X))$ and if there exists a Banach space $Z$ continuously embedded in $X$ such that $R\colon Z \to L^p([0,T];X)$ is bounded, then for some constant $C \ge 0$ independent of the inhomogeneity and of the initial value in $Z$ the solution satisfies the maximal regularity estimate
	\begin{equation}
		\label{eq:general_mr_estimate}
		\norm{u}_{W^{1,p}([0,T];X)} + \norm{A(\cdot)u(\cdot)}_{L^p([0,T];X)} \le C (\norm{f}_{L^p([0,T];X)} + \norm{u_0}_Z).
	\end{equation}
	The case of general inhomogeneities $f \in L^p([0,T];X)$ follows from an approximation argument: by Lemma~\ref{lem:inhom_approximation} there exists a sequence $(f_n)_{n \in \IN} \subset L^{\infty}([0,T];D(A(t)))$ with $f_n \to f$ in $L^p([0,T];X)$. We denote by $(u_n)_{n \in \IN}$ the corresponding classical solutions of~\eqref{nacp} with initial value $u_0 \in Z$. For $n, m \in \IN$ the difference $u_n - u_m$ solves~\eqref{nacp} for $u_0 = 0$. By the maximal regularity estimate~\eqref{eq:general_mr_estimate} one therefore has
	\[
		\norm{u_n - u_m}_{W^{1,p}([0,T];X)} + \norm{A(\cdot)(u_n - u_m)(\cdot)}_{L^p([0,T];X)} \le C \norm{f_n - f_m}_{L^p([0,T];X)}.
	\]
	Thus $(u_n)_{n \in \IN}$ and $(A(\cdot)u_n(\cdot))_{n \in \IN}$ are Cauchy sequences in $W^{1,p}([0,T];X)$ and $L^p([0,T];X)$ respectively and therefore converge to elements $u \in W^{1,p}([0,T];X)$ and $w \in L^p([0,T];X)$. After passing to subsequences all convergences hold almost everywhere in $t \in [0,T]$. For such $t \in [0,T]$ one has $u(t) \in D(A(t))$ and $A(t)u(t) = w(t)$ by the closedness of $A(t)$. Hence, taking limits yields
	\[
		\dot{u}(t) + A(t)u(t) = \lim_{n \to \infty} \dot{u}_n(t) + A(t)u_n(t) = \lim_{n \to \infty} f_n(t) = f(t).
	\]
	Since $u_n(0) \to u(0)$ by the embedding $W^{1,p}([0,T];X) \hookrightarrow C([0,T];X)$, we see that $u$ is a solution of~\eqref{nacp}. Moreover, note that the maximal regularity estimate~\eqref{eq:general_mr_estimate} passes to the limit. It remains to show the uniqueness of the solution in the maximal regularity space. By linearity it suffices to consider the case $u_0 = 0$ and $f = 0$. Differentiating for such a solution $u$ the function $v(s) = e^{-(t-s)A(t)} u(s)$ for fixed $t \in (0,T)$ gives
	 \begin{align*}
	 	\dot{v}(s) & = A(t)e^{-(t-s)A(t)}u(s) + e^{-(t-s)A(t)}\dot{u}(s) \\
		& = A(t)e^{-(t-s)A(t)}u(s) - e^{-(t-s)A(t)}A(s)u(s) \\
		& = A(t)e^{-(t-s)A(t)} (A(s)^{-1} - A(t)^{-1}) A(s) u(s)   
	 \end{align*}
	Integrating over $(0,t)$ and applying $A(t)$ at both sides gives
	\[
		A(t)u(t) = \int_0^t A(t)^2 e^{-(t-s)A(t)} (A(s)^{-1} - A(t)^{-1}) A(s) u(s) \d s = Q(A(\cdot)u(\cdot))(t).
	\]
	Since $\Id - Q$ is invertible, we obtain $A(\cdot)u(\cdot) = 0$ and consequently $u = 0$. Altogether we obtain maximal regularity for the problem~\eqref{nacp}. Hence, everything boils down to the mapping properties of the operators $S$ and $R$. We will deal with $R$ in the context of Banach scales in the next section.
	
\subsubsection{Boundedness of \texorpdfstring{$S$}{S}}

	As done in~\cite{HieMon00b} one can reduce the boundedness of $S$ to the study of a pseudodifferential operator with an operator-valued symbol. In fact, for $f \in C_c^{\infty}([0,T])$ one has
	\[
		(Sf)(t) = \int_{-\infty}^{\infty} A(t)e^{-(t-s) A(t)} \mathds{1}_{[0,\infty)}(t-s)  f(s) \d s.
	\]
	Extend $t \mapsto A(t)$ with $A(0)$ and $A(T)$ at the left and right end of $[0,T]$. Rewriting the above integral gives
	\begin{align*}
		\MoveEqLeft \int_{-\infty}^{\infty} A(t) e^{-(t-s) A(t)} \mathds{1}_{[0,\infty)}(t-s) f(s) \d s \\
		& = \int_{-\infty}^{\infty} A(t) e^{-(t-s) A(t)} \mathds{1}_{[0,\infty)}(t-s) \int_{-\infty}^{\infty} \hat{f}(\xi) e^{2\pi i s \xi} \d\xi \d s \\
		& = \int_{-\infty}^{\infty} A(t) \int_{-\infty}^{\infty} e^{-(t-s)A(t)} \mathds{1}_{[0,\infty)}(t-s) e^{2\pi i s \xi} \d s \, \hat{f}(\xi) \d\xi \\
		& = \int_{-\infty}^{\infty} A(t) \int_{0}^{\infty} e^{-s A(t)} e^{-2\pi i s \xi} \d s \, e^{2\pi i t \xi} \hat{f}(\xi) \d\xi \\
		& = -\int_{-\infty}^{\infty} A(t) R(2\pi i \xi, A(t)) e^{2\pi i t \xi} \hat{f}(\xi) \d\xi.
	\end{align*}
	Hence, the boundedness of $S$ follows from the boundedness of the vector-valued pseudodifferential operator
	\[
		(\hat{S}f)(t) \coloneqq \int_{-\infty}^{\infty} a(t,\xi) \hat{f}(\xi) e^{2\pi i t \xi} \d\xi 
	\]
	shown in \cite{PorStr06} (see also~\cite{HytPor08}), where $a\colon \IR \times \IR \to \mathcal{B}(X)$ denotes the symbol
	\[
		a(t,\xi) = \begin{cases}
			i\xi R(i\xi, A(0)) & t < 0 \\ 
			i\xi R(i\xi, A(t)) & t \in [0,T] \\
			i\xi R(i\xi, A(T)) & t > T.
		\end{cases} 
	\]
	The actual formulation of the theorem uses concepts from vector-valued harmonic analysis to be introduced next.
	
\subsection{Vector-valued harmonic analysis}

	First recall that the Hilbert transform
	\[
		(Hf)(t) \coloneqq \frac{1}{\pi} \lim_{\varepsilon \downarrow 0} \int_{\abs{x} \ge \epsilon} \frac{f(x-t)}{t} \d t
	\]
	initially defined for functions $f \in \mathcal{S}(\IR)$ extends to a bounded operator $H\colon L^p(\IR) \to L^p(\IR)$ for all $p \in (1, \infty)$. A Banach space $X$ is called a \emph{UMD space} if the operator $H \otimes \Id_X$ extends to a bounded operator on $L^p(\IR;X)$ for one or equivalently all $p \in (1, \infty)$. It is easy to see from this definition that $\sigma$-finite $L^p$-spaces for $p \in (1, \infty)$ are UMD spaces. Note that UMD spaces are precisely those spaces on which the most fundamental Fourier multipliers, indicator functions of intervals in $\IR$, define bounded operators on $L^p(\IR;X)$. Hence, only on these spaces a rich theory can be developed. For further details on UMD spaces we refer to~\cite{Fra86} and~\cite{Bur01}.
	
	Further, it is by now well-known that vector-valued analogues of classical theorems in harmonic analysis, e.g.\ the Mikhlin multiplier theorem, only hold in the non-Hilbert space setting if boundedness in operator norm is replaced by a stronger boundedness concept called $\mathcal{R}$-boundedness. A subset $\mathcal{T} \subset \mathcal{B}(X)$ for some Banach space $X$ is called \emph{$\mathcal{R}$-bounded} if there exists a constant $C \ge 0$ such that for all $n \in \IN$, $T_1, \ldots, T_n \in \mathcal{T}$ and $x_1, \ldots, x_n \in X$ one has
	\[
		\IE \biggnorm{\sum_{k=1}^n \epsilon_k T_k x_k} \le C \IE \biggnorm{\sum_{k=1}^n \epsilon_k x_k},
	\]
	where $\epsilon_1, \ldots, \epsilon_n$ are $n$ independent identically distributed Rademacher variables, i.e.\ $\IP(\epsilon_k = \pm 1) = \frac{1}{2}$ for all $k = 1, \ldots, n$. The smallest constant $C \ge 0$ for which the above inequality holds is denoted by $\mathcal{R}(\mathcal{T})$. Note that this implies $\mathcal{R}(\mathcal{T} \mathcal{S}) \le \mathcal{R}(\mathcal{T}) \mathcal{R}(\mathcal{S})$ for $\mathcal{T}, \mathcal{S} \subset \mathcal{B}(X)$. Furthermore, it follows from Kahane's contraction principle~\cite[Proposition~2.5]{KunWei04} that $\mathcal{R} \{ \lambda \Id_X: \lambda \in [-1,1] \} = 1$.

\section{Maximal Regularity via Acquistapace--Terreni and Banach Scales}

	We now present a new variant of the known Acquistapace--Terreni result for maximal regularity in terms of Banach scales that is motivated by the methods used in~\cite{OuhSpi10} in the Hilbert space setting.
	
\subsection{Initial value zero}
		
	We start with the case of initial value zero.
	
	\begin{proposition}\label{prop:mr_zero}
		For $T \in (0, \infty)$ let $(A(t))_{t \in [0,T]}$ be a family of sectorial operators on some UMD Banach space $X$. Suppose that $(A(t))_{t \in [0,T]}$ satisfies the following assumptions.
		\begin{thm_enum}
			\item \label{thm:abstract_mr_zero:r_boundedness} For some $\varphi \in (0,\frac{\pi}{2})$ one has $\sigma(A(t)) \subset \overline{\Sigma_{\varphi}}$ for all $t \in [0,T]$ and
				\[
					\R{(1 + \abs{\lambda}) R(\lambda, A(t)): \lambda \not\in \overline{\Sigma_{\varphi}}, t \in [0,T]} < \infty.
				\]
			\item\label{thm:abstract_mr_zero:hoelder} There exist constants $0 \le \gamma < \beta \le 1$ and $C \ge 0$ such that for all $t,s \in [0,T]$ one has
				\begin{equation}
					\label{eq:hölder_estimate_zero}
					\abs{\langle (A_{-1}(t) - A_{-1}(s))x, x' \rangle} \le C \abs{t-s}^{\beta} \norm{x}_{D(A(s))} \norm{x'}_{X'_{\gamma, A(t)'}}
				\end{equation}
				for all $x \in D(A(s))$ and all $x' \in D(A(t)')$.
		\end{thm_enum}
		Then the non-autonomous Cauchy problem \eqref{nacp} for $(A(t))_{t \in [0,T]}$ has maximal $L^p$-regularity for $u_0 = 0$ and all $p \in (1, \infty)$.
	\end{proposition}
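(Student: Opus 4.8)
The strategy is to verify the hypotheses of the Acquistapace--Terreni machinery developed in Section~\ref{sec:at}, which reduces maximal regularity with zero initial value to two things: the invertibility of $\Id - Q$ on $L^p([0,T];X)$, and the boundedness of the operator $S$ on $L^p([0,T];X)$. Assumption~\ref{thm:abstract_mr_zero:r_boundedness} immediately gives the uniform sectoriality~\eqref{eq:uniform_sectorial} (by taking $\mathcal{R}$-bounds to ordinary norm bounds). The first task is to deduce the classical Acquistapace--Terreni condition~\eqref{eq:at} from~\ref{thm:abstract_mr_zero:hoelder}. The point is that, by the duality identification $(X_{\gamma,A(t)'}')' \simeq X_{-\gamma,A(t)}$ in the extrapolation scale, the bound~\eqref{eq:hölder_estimate_zero} says precisely that $A_{-1}(t) - A_{-1}(s)$ maps $D(A(s))$ into $X_{-\gamma,A(t)}$ with norm $\lesssim \abs{t-s}^\beta$; equivalently, $A(s)^{-1} - A(t)^{-1}$ maps $X$ into $X_{1-\gamma,A(t)}$ with the same bound (apply $A_{-1}(s)^{-1}$ on one side and $A_{-1}(t)^{-1}$ on the other, using that these are isometries between the relevant scale spaces). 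Combining this with the resolvent estimate~\eqref{eq:resolvent_estimate}, namely $\norm{R(\lambda,A(t))}_{\mathcal{B}(X_{1-\gamma,A(t)},D(A(t)))} \lesssim \abs{\lambda}^{-\gamma}$ (a rescaled version of interpolating the two bounds $\norm{R(\lambda,A)}_{\mathcal{B}(X,X)} \lesssim \abs{\lambda}^{-1}$ and $\norm{R(\lambda,A)}_{\mathcal{B}(X,D(A))}\lesssim 1$ within the shifted scale), one obtains
\[
	\normalnorm{A(t)R(\lambda,A(t))(A(t)^{-1} - A(s)^{-1})}_{\mathcal{B}(X)} \lesssim \frac{\abs{t-s}^\beta}{1 + \abs{\lambda}^{1-\gamma}},
\]
which is exactly~\eqref{eq:at}. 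Note however that Proposition~\ref{prop:invertible} requires the constant $K$ in~\eqref{eq:at} to be \emph{small}, whereas~\ref{thm:abstract_mr_zero:hoelder} only gives a finite constant; this is resolved by the standard localization argument — partition $[0,T]$ into finitely many subintervals of length $\delta$ so small that the constant becomes $\lesssim K\delta^{\beta-\gamma}$ small on each piece, solve the Cauchy problem on each subinterval successively (feeding the endpoint value into the next interval's initial datum, which is handled by the $R$-operator mapping $X \to L^p$, itself bounded once sectoriality is in place since here the domain spaces on each subinterval can be patched), and paste the solutions. Hence $\Id - Q$ is invertible.

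The second task is the boundedness of $S$ on $L^p([0,T];X)$. As explained in the excerpt, this reduces to the boundedness of the pseudodifferential operator $\hat S$ with symbol $a(t,\xi) = i\xi R(i\xi,A(t))$ (extended constantly outside $[0,T]$). The relevant theorem from~\cite{PorStr06} (see also~\cite{HytPor08}) requires $X$ to be a UMD space — which is hypothesized — together with an $\mathcal{R}$-boundedness condition on the symbol and its ``derivative'' in $\xi$, roughly that $\{a(t,\xi) : t,\xi\}$ and $\{\xi \partial_\xi a(t,\xi): t, \xi\}$ be $\mathcal{R}$-bounded, plus a Hölder-type regularity in $t$. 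The $\mathcal{R}$-boundedness of $\{i\xi R(i\xi,A(t))\}$ and of $\{(i\xi)^2 R(i\xi,A(t))^2\} = \{\xi \partial_\xi(i\xi R(i\xi,A(t)))\} + \{i\xi R(i\xi,A(t))\}$ follows directly from~\ref{thm:abstract_mr_zero:r_boundedness} together with the product rule $\mathcal{R}(\mathcal{T}\mathcal{S}) \le \mathcal{R}(\mathcal{T})\mathcal{R}(\mathcal{S})$. The Hölder regularity in $t$ of $\xi \mapsto i\xi R(i\xi, A(t))$ in the appropriate sense is extracted from~\eqref{eq:at} (which, as noted after its statement, already implies $\normalnorm{A(t)^{-1} - A(s)^{-1}} \lesssim \abs{t-s}^\beta$, and more precisely via the $X_{1-\gamma}$-improvement one gets the needed fractional estimate on the resolvents). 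Once the symbol verifies the hypotheses of the cited pseudodifferential theorem, $\hat S$ — and therefore $S$ — is bounded on $L^p([0,T];X)$ for all $p \in (1,\infty)$.

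With $\Id - Q$ invertible and $S \in \mathcal{B}(L^p([0,T];X))$, the abstract argument in Section~\ref{sec:at} gives, for $f \in L^\infty([0,T];D(A(t)))$ and $u_0 = 0$, a classical solution lying in the maximal regularity space together with the estimate~\eqref{eq:general_mr_estimate}; the density Lemma~\ref{lem:inhom_approximation} extends this to all $f \in L^p([0,T];X)$, and the invertibility of $\Id - Q$ gives uniqueness. This is exactly maximal $L^p$-regularity for $u_0 = 0$, as claimed.

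I expect the main obstacle to be the careful translation of the scale-theoretic bound~\ref{thm:abstract_mr_zero:hoelder} into the classical pointwise resolvent form~\eqref{eq:at} — in particular getting the exponent $1-\gamma$ right by correctly interpolating resolvent bounds \emph{inside the shifted extrapolation scale} $X_{\alpha, A(t)}$ rather than the unshifted one, and keeping all constants uniform in $t$ — together with checking that the $t$-regularity required by the pseudodifferential theorem of~\cite{PorStr06} is genuinely implied (and not merely suggested) by~\eqref{eq:at}. The smallness-of-$K$ issue is routine but must be dealt with by the interval-subdivision argument rather than ignored.
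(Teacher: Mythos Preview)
Your derivation of~\eqref{eq:at} from assumption~\ref{thm:abstract_mr_zero:hoelder} via the extrapolation scale is essentially the paper's argument in dual form (the paper tests $Lx$ against $x'\in D(A(t)')$ and bounds $\|R(\lambda,A(t)')x'\|_{X'_{\gamma,A(t)'}}\lesssim|\lambda|^{\gamma-1}\|x'\|$ using~\eqref{eq:resolvent_estimate}). One slip: your intermediate resolvent bound should read $\|R(\lambda,A(t))\|_{\mathcal{B}(X_{1-\gamma,A(t)},D(A(t)))}\lesssim|\lambda|^{-(1-\gamma)}$, not $|\lambda|^{-\gamma}$, though the final~\eqref{eq:at} estimate you write down has the correct exponent.

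The genuine gap is your treatment of the smallness of $K$. The subdivision argument requires solving~\eqref{nacp} on $[\delta,2\delta]$ with the nonzero initial datum $u(\delta)$, and you claim this works because ``the $R$-operator mapping $X\to L^p$ [is] bounded once sectoriality is in place''. That is false: for generic $u_0\in X$ one has $\|A(t)e^{-tA(t)}u_0\|\sim t^{-1}\|u_0\|$, which is not $L^p$-integrable near $t=0$. The operator $R$ is only bounded from the trace space $(D(A(0)),X)_{1/p,p}$, and establishing \emph{that} bound is precisely the content of the proof of Theorem~\ref{thm:abstract_mr_non-zero}, which is proved \emph{after} the present proposition and itself uses assumption~\ref{thm:abstract_mr_zero:hoelder}; so the subdivision as you describe it is circular. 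The paper instead \emph{shifts}: one checks that $(A(t)+\mu)_{t\in[0,T]}$ still satisfies both assumptions, and that its Acquistapace--Terreni constant acquires an extra factor $\mu^{-\epsilon}$ for any fixed $\epsilon\in(0,\beta-\gamma)$ (split $|\lambda-\mu|^{-(1-\gamma)}=|\lambda-\mu|^{-\epsilon}\cdot|\lambda-\mu|^{-(1-\gamma-\epsilon)}$ and use $|\lambda-\mu|\ge\mu\sin\varphi$ for $\lambda\notin\overline{\Sigma_\varphi}$). For $\mu$ large enough Proposition~\ref{prop:invertible} then applies directly on all of $[0,T]$, \cite[Corollary~14]{PorStr06} gives the boundedness of $\hat S$, and Lemma~\ref{lem:mr_shift} removes the shift afterwards---no nonzero initial values ever enter.
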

	
	Before giving a proof of this theorem, let us comment on the various assumptions. Concerning assumption~\ref{thm:abstract_mr_zero:hoelder}, notice that $A_{-1}(t)x \in X_{-1,A(t)}$ for all $x \in X$ and therefore the duality is well-defined for all $x' \in X'_{1,A(t)'} = D(A(t)')$. Assumption~\ref{thm:abstract_mr_zero:hoelder} is probably the most restrictive. The limitation to Hölder regularity stems from our approach via the Acquistapace--Terreni representation formula for solutions of~\eqref{nacp}. Nevertheless, the recent negative solution to Lions' problem shows that in the Hölder scale the required regularity is optimal for general operators~\cite{Fac16c}.
		
	Assumption~\ref{thm:abstract_mr_zero:r_boundedness} is rather natural. It reduces to uniform boundedness in the Hilbert space case and therefore to the result proved in~\cite{HieMon00b}. In the non-Hilbert space case boundedness in operator norm cannot be sufficient for maximal regularity as the existence of counterexamples shows already in the autonomous case (see~\cite{KalLan00}, \cite{Fac14} and~\cite[Theorem~3.18]{Fac14c} for a self-contained counterexample on $L^p$-spaces). In Section~\ref{sec:r_boundedness} we will verify assumption~\ref{thm:abstract_mr_zero:r_boundedness} for a broad class of second order elliptic operators in divergence form.
	
	In the following proofs we will often shift the involved operators. This does not influence maximal regularity as the next  easy to prove lemma shows.
	
	\begin{lemma}\label{lem:mr_shift}
		Let $(A(t))_{t \in [0,T]}$ be a family of closed operators for which the problem~\eqref{nacp} has maximal $L^p$-regularity. Then $(A(t) + \mu)_{t \in [0,T]}$ has maximal $L^p$-regularity for all $\mu \in \IR$ and the same space of initial values.
	\end{lemma}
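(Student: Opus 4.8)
The plan is to reduce the shifted problem to the unshifted one by the elementary change of variables $v(t) \coloneqq e^{\mu t} u(t)$. First I would record two structural facts. Since $A(t)$ is closed, $A(t) + \mu$ is closed with $D(A(t) + \mu) = D(A(t))$, so the maximal regularity spaces of the two families coincide as sets of functions. Moreover, for $u \in L^p([0,T];X)$ one has $\mu u(\cdot) \in L^p([0,T];X)$ automatically, hence $(A(\cdot) + \mu) u(\cdot) \in L^p([0,T];X)$ if and only if $A(\cdot) u(\cdot) \in L^p([0,T];X)$, with equivalence of norms and constants depending only on $\mu$ and $T$.

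Next I would carry out the substitution. If $u$ solves \eqref{nacp} for $(A(t) + \mu)_{t \in [0,T]}$ with right-hand side $f$ and initial value $u_0$, then, using the product rule for Sobolev functions, $v(t) = e^{\mu t} u(t)$ satisfies $\dot v(t) = \mu e^{\mu t} u(t) + e^{\mu t} \dot u(t) = e^{\mu t} f(t) - A(t) v(t)$, so $v$ solves \eqref{nacp} for $(A(t))_{t \in [0,T]}$ with right-hand side $e^{\mu \cdot} f$ and the same initial value $v(0) = u_0$; conversely $u(t) = e^{-\mu t} v(t)$ reverses the substitution. On the finite interval $[0,T]$ the map $g \mapsto e^{\pm \mu \cdot} g$ is a bounded bijection of $L^p([0,T];X)$ onto itself and of $W^{1,p}([0,T];X)$ onto itself, and it transports the maximal regularity space of $(A(t))_{t}$ onto that of $(A(t) + \mu)_{t}$. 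Hence existence and uniqueness of a solution in the maximal regularity space for the shifted family, with the maximal regularity estimate, is equivalent to the same assertion for $(A(t))_{t}$. Finally, since $v(0) = u(0)$, the admissible initial values are literally the same, so the space $Z$ is unchanged.

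There is no genuine obstacle; the only points needing a line of care are the justification of $\dot v = \mu e^{\mu t} u + e^{\mu t} \dot u$ for $W^{1,p}$-functions (product rule, or approximation by smooth functions), and the bookkeeping that the constant in the transported maximal regularity estimate depends only on $\mu$, $T$, and the constant for $(A(t))_{t}$, but not on $f$ or $u_0$. Note in particular that no sectoriality of $A(t) + \mu$ is needed, which is consistent with the hypothesis that the $A(t)$ are merely closed.
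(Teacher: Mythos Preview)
Your argument is correct and is exactly the standard substitution $v(t)=e^{\mu t}u(t)$ one has in mind here; the paper itself omits the proof, merely calling the lemma ``easy to prove'', so there is nothing further to compare.
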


	We are now ready to give the proof of Proposition~\ref{prop:mr_zero}.
	
	\begin{proof}[Proof of Proposition~\ref{prop:mr_zero}]
		We first treat the Acquistapace--Terreni estimate~\eqref{eq:at}. For this let $x \in X$ and fix $s, t \in [0,T]$ as well as $\lambda \not\in \overline{\Sigma_{\varphi}}$. Using the fact that $A_{-1}(t) \in \mathcal{B}(X, X_{-1,A(t)})$ extends the action of $A$, we can rewrite the Acquistapace--Terreni operator as
			\begin{align*}
				Lx & = A(t) R(\lambda, A(t)) (A(t)^{-1}  - A(s)^{-1})x \\
				& = R(\lambda, A_{-1}(t)) A_{-1}(t) (A(t)^{-1} - A(s)^{-1})x \\
				& = R(\lambda, A_{-1}(t)) (A(s) - A_{-1}(t)) A(s)^{-1}x
			\end{align*}
		Testing this identity with $x' \in D(A(t)')$ gives
			\begin{align*}
				\langle Lx, x' \rangle_{X,X'} & = \langle R(\lambda, A_{-1}(t)) (A(s) - A_{-1}(t)) A(s)^{-1}x, x' \rangle_{X_{-1,A(t)},X_{1,A(t)'}} \\
				& = \langle (A(s) - A_{-1}(t)) A(s)^{-1}x, R(\lambda,A_1(t)')x' \rangle_{X_{-1,A(t)},X_{1,A(t)'}}.
			\end{align*}
		By density the above identity extends to all $x' \in X'$. Hence, for all $x' \in X'$ we have
			\begin{align*}
				\langle Lx, x' \rangle & = \langle A_{-1}(s) A(s)^{-1}x, R(\lambda, A(t)') x' \rangle - \langle A_{-1}(t) A(s)^{-1}x, R(\lambda, A(t)') x' \rangle.
			\end{align*}
		By the Hölder continuity assumption \ref{thm:abstract_mr_zero:hoelder} we further get
			\begin{align*}
				\abs{\langle Lx, x' \rangle} & \le C \abs{t-s}^{\beta} \normalnorm{A(s)^{-1}x}_{D(A(s))} \normalnorm{R(\lambda, A(t)')x'}_{X'_{\gamma, A(t)'}}. 
			\end{align*}
		
		Now fix $\mu_0 > 0$. For $\mu \in (\mu_0, \infty)$ assumption~\ref{thm:abstract_mr_zero:hoelder} holds for the shifted family $(A(t) + \mu)_{t \in [0,T]}$ as well, i.e.\ one has
			\begin{equation}
				\label{eq:shifted_hoelder}
				\abs{\langle ((A(t) + \mu)_{-1} - (A(s) + \mu)_{-1})x, x' \rangle} \le C \abs{t-s}^{\beta} \norm{x}_{D(A(s))} \norm{x'}_{X'_{\gamma, A(t)'}}.
			\end{equation}
		Here $(A(t) + \mu)_{-1}$ denotes the extension of $A(t) + \mu$ to $X_{A(t),-1}$ which agrees with $A_{-1}(t) + \mu$. Choose $\epsilon \in (0, \beta - \gamma)$. Then the above deduced estimate applied to $(A(t) + \mu)_{t \in [0,T]}$ gives for some fixed $M \ge 0$ and all $x \in X$, $x' \in X'$ and $\lambda \not\in \overline{\Sigma_{\varphi}}$
			\begin{align*}
				\MoveEqLeft \langle (A(t) + \mu) R(\lambda, A(t) + \mu) ((\mu + A(t))^{-1}  - (\mu + A(s))^{-1})x, x' \rangle \\
				& \le C \abs{t-s}^{\beta} \normalnorm{(\mu + A(s))^{-1}x}_{D(A(s))} \normalnorm{R(\lambda - \mu, A(t)') x'}_{X'_{\gamma,A(t)'}} \\
				& \le CM \abs{t-s}^{\beta} \abs{\lambda - \mu}^{\gamma - 1} \normalnorm{A(s)(\mu + A(s))^{-1}x}_{X} \norm{x'}_{X'},
			\end{align*}
		where we have used estimate~\eqref{eq:resolvent_estimate} in the last inequality. We first remark that $A(s)(\mu + A(s))^{-1} = \Id - \mu(\mu + A(s))^{-1}$ which is uniformly bounded in $\mu \in (\mu_0, \infty)$ and $s \in [0,T]$ as a consequence of assumption~\ref{thm:abstract_mr_zero:r_boundedness}. Secondly, for $\lambda \not\in \overline{\Sigma_{\varphi}}$ and $\mu \in (\mu_0, \infty)$ one has
			\begin{align*}
				\frac{1}{\normalabs{\lambda - \mu}^{1-\gamma}} = \frac{1}{\normalabs{\lambda - \mu}^{\epsilon}} \frac{1}{\normalabs{\lambda - \mu}^{1-(\gamma + \epsilon)}} \le \sup_{\lambda \not\in \overline{\Sigma_{\varphi}}} \frac{1}{\normalabs{\lambda - \mu}^{\epsilon}} \cdot \sup_{\mu \in (\mu_0,\infty)} \frac{1}{\normalabs{\lambda - \mu}^{1-(\gamma + \epsilon)}}.
			\end{align*}
		We now deal with both factors separately. For the first factor on the right hand side we use $\abs{\lambda - \mu} \ge  \mu \sin \varphi$, whereas for the second one has $\abs{\lambda - \mu} \ge \abs{\lambda - \mu_0}$ if $\Re \lambda \le \mu_0$ and $\abs{\lambda - \mu} \ge \abs{\Im \lambda} \ge \sin \varphi \abs{\lambda}$ if $\Re \lambda \ge \mu_0$. Furthermore the quotients
			\begin{align*}
				\frac{1 + \normalabs{\lambda}^{1-(\gamma + \epsilon)}}{\normalabs{\lambda - \mu_0}^{1-(\gamma + \epsilon)}} & \text{ on } \overline{\Sigma_{\varphi}}^c \cap \{ \lambda: \Re \lambda \le \mu_0 \}, \\
				\frac{1 + \normalabs{\lambda}^{1-(\gamma + \epsilon)}}{\normalabs{\lambda}^{1-(\gamma + \epsilon)}} & \text{ on } \overline{\Sigma_{\varphi}}^c \cap \{ \lambda: \Re \lambda \ge \mu_0 \}
			\end{align*}
		are bounded. Altogether we obtain the estimate
		\begin{align*}
			\MoveEqLeft \normalnorm{(A(t) + \mu)R(\lambda, A(t) + \mu) ((A(t) + \mu)^{-1}  - (A(s) + \mu)^{-1})} \\
			& \lesssim \frac{1}{\mu^{\epsilon}} \abs{t-s}^{\beta} \frac{1}{1 + \normalabs{\lambda}^{1-(\gamma + \epsilon)}},
		\end{align*}
		where the implicit constant in the estimate is universal for all variables on the right hand side. Hence, for suitable large $\mu$ the corresponding operator $\Id - Q$ is invertible on $L^p([0,T];X)$ by Proposition~\ref{prop:invertible}. We now fix such a $\mu \in (\mu_0, \infty)$. Note that
		\begin{align*}
			\MoveEqLeft \mathcal{R}\{ (1 + \abs{\lambda}) R(\lambda, A(t) + \mu): \lambda \not\in \overline{\Sigma_{\varphi}}, t \in [0,T] \} \\
			& = \mathcal{R}\{ (1 + \abs{\lambda}) R(\lambda - \mu, A(t)): \lambda \not\in \overline{\Sigma_{\varphi}}, t \in [0,T] \} \\
			& \le \sup_{\lambda \not\in \overline{\Sigma_{\varphi}}} \frac{1 + \abs{\lambda}}{1 + \abs{\lambda - \mu}} \cdot \mathcal{R}\{ (1+\abs{\lambda}) R(\lambda, A(t)): \lambda \not\in \overline{\Sigma_{\varphi}}, t \in [0,T] \},
		\end{align*}
		where we have used Kahane's contraction principle in the last inequality. Since the supremum is finite, we see that $(A(t) + \mu)_{t \in [0,T]}$ satisfies assumption~\ref{thm:abstract_mr_zero:r_boundedness}. 
		
		It is shown in~\cite[Corollary~14]{PorStr06} that if $\Id - Q$ is invertible as shown above and if assumption~\ref{thm:abstract_mr_zero:r_boundedness} holds as just shown, then the singular integral operator $\hat{S}$ is bounded on $L^p([0,T];X)$ for all $p \in (1, \infty)$. By the considerations at the beginning of this section we have shown maximal regularity for the problem
		\[
			\dot{u}(t) + (A(t) + \mu)u(t) = f(t)
		\]
		with initial value equal to zero. The assertion now follows from Lemma~\ref{lem:mr_shift}.
	\end{proof}
	
\subsection{Non-zero initial value}

	From Section~\ref{sec:at} we know that maximal regularity holds for all initial values in some embedded Banach space $Z$ under the assumptions of Proposition~\ref{prop:mr_zero} provided $R\colon Z \to L^p([0,T];X)$ is bounded. Recall that in the autonomous case, i.e.\ $A(t) = A$, maximal regularity holds precisely for initial values in the real interpolation space $(D(A),X)_{1/p,p}$ (\cite[Lemma~1]{ChiFio14} and~\cite[Corollary 1.14]{Lun09}). Hence, this is the best we can expect in the non-autonomous case and, indeed, this holds automatically by~\cite[Lemma~2.1]{GioLunSch05}.	However, we prefer to give a self-contained more streamlined proof. In the following, as an exception to the rule, the space $D(A(0))$ is endowed with the graph norm whenever it is used in the context of initial values for~\eqref{nacp}.
	
	\begin{theorem}\label{thm:abstract_mr_non-zero}
		For $T \in (0, \infty)$ let $(A(t))_{t \in [0,T]}$ be a family of sectorial operators on some UMD Banach space $X$. Suppose that $(A(t))_{t \in [0,T]}$ satisfies the following assumptions.
		\begin{thm_enum}
			\item \label{thm:abstract_mr_non-zero:r_boundedness} For some $\varphi \in (0,\frac{\pi}{2})$ one has $\sigma(A(t)) \subset \overline{\Sigma_{\varphi}}$ for all $t \in [0,T]$ and
				\[
					\R{ (1 + \abs{\lambda}) R(\lambda, A(t)): \lambda \not\in \overline{\Sigma_{\varphi}}, t \in [0,T]} < \infty.
				\]
			\item\label{thm:abstract_mr_non-zero:hoelder} There exist constants $0 \le \gamma < \beta \le 1$ and $C \ge 0$ such that for all $t,s \in [0,T]$ one has
				\begin{equation}
					\label{eq:hölder_estimate_non-zero}
					\abs{\langle (A_{-1}(t) - A_{-1}(s))x, x' \rangle} \le C \abs{t-s}^{\beta} \norm{x}_{D(A(s))} \norm{x'}_{X'_{\gamma,A(t)'}}
				\end{equation}
				for all $x \in D(A(s))$ and all $x' \in D(A(t)')$.
		\end{thm_enum}
		Then for all $p \in (1, \infty)$ and $\mu \in \IR$ the non-autonomous Cauchy problem \eqref{nacp} for $(A(t) + \mu)_{t \in [0,T]}$ has maximal $L^p$-regularity for all $u_0 \in (D(A(0)+\mu),X)_{1/p,p}$.
	\end{theorem}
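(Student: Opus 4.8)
The plan is to reduce, via Section~\ref{sec:at} and Lemma~\ref{lem:mr_shift}, to a single boundedness statement for $R$, and then to prove it by comparison with the frozen operator. Recall from the discussion following Proposition~\ref{prop:invertible} that once $\Id - Q$ is invertible on $L^p([0,T];X)$ and $S\in\mathcal{B}(L^p([0,T];X))$, the problem~\eqref{nacp} for a family $(B(t))_{t\in[0,T]}$ has maximal $L^p$-regularity for all initial values in an embedded space $Z\hookrightarrow X$ as soon as $R\colon Z\to L^p([0,T];X)$, $(Ru_0)(t)=B(t)e^{-tB(t)}u_0$, is bounded. Since $D(A(0)+\mu)$ is independent of $\mu$ with equivalent graph norms, so is the space $(D(A(0)+\mu),X)_{1/p,p}$ up to equivalent norms; by Lemma~\ref{lem:mr_shift} it therefore suffices to treat one large value of $\mu$. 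Setting $B(t)\coloneqq A(t)+\mu$, the reductions carried out in the proof of Proposition~\ref{prop:mr_zero} show that the $B(t)$ are invertible, satisfy~\ref{thm:abstract_mr_non-zero:r_boundedness}, satisfy~\eqref{eq:hölder_estimate_non-zero} with $\gamma$ replaced by some $\tilde\gamma\in(\gamma,\beta)$ (so $\tilde\gamma>0$), and are such that $\Id-Q$ is invertible and $S$ is bounded. Hence it only remains to show that $R$ maps $(D(B(0)),X)_{1/p,p}$ boundedly into $L^p([0,T];X)$; afterwards Lemma~\ref{lem:mr_shift} yields the theorem for every $A(t)+\mu'$, $\mu'\in\IR$.

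We compare $R$ with the frozen operator $(R^{(0)}u_0)(t)\coloneqq B(0)e^{-tB(0)}u_0$. Since $B(0)$ is invertible, the classical semigroup description of $(D(B(0)),X)_{1/p,p}$ (see~\cite[Lemma~1]{ChiFio14} and~\cite[Corollary~1.14]{Lun09}) shows that $R^{(0)}$ is bounded from $(D(B(0)),X)_{1/p,p}$ into $L^p([0,T];X)$. For the difference, using the analytic calculus along a sectorial contour $\Gamma$ surrounding $\sigma(B(t))$ and staying in $\bigcap_t\rho(B(t))$ (possible uniformly in $t$ since $\sup_t\norm{B(t)^{-1}}<\infty$), we write
\[
	(Ru_0)(t)-(R^{(0)}u_0)(t)=\frac{1}{2\pi i}\int_\Gamma z\,e^{-tz}\bigl(R(z,B(t))-R(z,B(0))\bigr)u_0\,\d z.
\]
The resolvent difference is estimated from the extrapolation-scale identity $R(z,B(t))-R(z,B(0))=R(z,B_{-1}(t))(B_{-1}(t)-B_{-1}(0))R(z,B(0))$ together with~\eqref{eq:hölder_estimate_non-zero} for $\tilde\gamma$, the resolvent estimate~\eqref{eq:resolvent_estimate} applied in the interpolation--extrapolation scale of $B(t)$, and the uniform sectoriality of the $B(t)$, along the lines of the proof of Proposition~\ref{prop:mr_zero}. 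This yields, for $z\notin\overline{\Sigma_\varphi}$,
\begin{align*}
	\bigl\|(R(z,B(t))-R(z,B(0)))u_0\bigr\|_X &\lesssim t^{\beta}\abs{z}^{\tilde\gamma-1}\norm{u_0}_X \qquad (u_0\in X), \\
	\bigl\|(R(z,B(t))-R(z,B(0)))u_0\bigr\|_X &\lesssim t^{\beta}\abs{z}^{\tilde\gamma-2}\norm{u_0}_{D(B(0))} \qquad (u_0\in D(B(0))),
\end{align*}
where the extra factor $\abs{z}^{-1}$ in the second line comes from $\norm{B(0)R(z,B(0))u_0}_X=\norm{R(z,B(0))B(0)u_0}_X\lesssim\abs{z}^{-1}\norm{u_0}_{D(B(0))}$, as opposed to the merely bounded $\norm{B(0)R(z,B(0))u_0}_X\lesssim\norm{u_0}_X$ for general $u_0\in X$. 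Inserting these bounds into the contour integral and using $\int_\Gamma\abs{z}^a e^{-t\Re z}\,\d\abs{z}\approx t^{-a-1}$ for $a>-1$ (here $\tilde\gamma>0$ is used) gives
\begin{align*}
	\norm{(R-R^{(0)})u_0(t)}_X &\lesssim t^{\beta-\tilde\gamma-1}\norm{u_0}_X \qquad (u_0\in X), \\
	\norm{(R-R^{(0)})u_0(t)}_X &\lesssim t^{\beta-\tilde\gamma}\norm{u_0}_{D(B(0))} \qquad (u_0\in D(B(0))).
\end{align*}

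Finally we interpolate these two endpoint estimates. For $u_0\in(D(B(0)),X)_{1/p,p}$ and any decomposition $u_0=a+b$ with $a\in X$ and $b\in D(B(0))$, adding the bounds gives $\norm{(R-R^{(0)})u_0(t)}_X\lesssim t^{\beta-\tilde\gamma-1}\bigl(\norm{a}_X+t\norm{b}_{D(B(0))}\bigr)$; taking the infimum over all such decompositions,
\[
	\norm{(R-R^{(0)})u_0(t)}_X\lesssim t^{\beta-\tilde\gamma-1}\,K\bigl(t,u_0;X,D(B(0))\bigr).
\]
Since $\beta-\tilde\gamma>0$ we have $t^{(\beta-\tilde\gamma-1)p}\le T^{(\beta-\tilde\gamma)p}\,t^{-p}$ on $(0,T)$, hence
\begin{align*}
	\int_0^T\norm{(R-R^{(0)})u_0(t)}_X^p\,\d t &\lesssim T^{(\beta-\tilde\gamma)p}\int_0^T t^{-p}\,K\bigl(t,u_0;X,D(B(0))\bigr)^p\,\d t \\
	&\lesssim\norm{u_0}_{(X,D(B(0)))_{1-1/p,p}}^p,
\end{align*}
the last step being the very definition of the real interpolation norm together with $(X,D(B(0)))_{1-1/p,p}=(D(B(0)),X)_{1/p,p}$. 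Combined with the boundedness of $R^{(0)}$ this shows $R\colon(D(B(0)),X)_{1/p,p}\to L^p([0,T];X)$, and the theorem follows as explained. The main obstacle is the derivation of the two sharp resolvent-difference estimates in the extrapolation scale --- in particular the gain of the additional power $\abs{z}^{-1}$ when $u_0\in D(B(0))$ --- which is exactly what makes the final real-interpolation argument close; the comparison with the frozen operator $R^{(0)}$ is essential, since $R$ itself is not bounded from $X$ into $L^q([0,T];X)$ for any $q\in[1,\infty]$.
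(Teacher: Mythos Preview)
Your argument is correct and follows essentially the same route as the paper: reduce via Lemma~\ref{lem:mr_shift} and Section~\ref{sec:at} to the boundedness of $R$ for a shifted family, compare $R$ with the frozen operator $R^{(0)}$, and estimate the difference through the contour representation combined with the resolvent identity and assumption~\ref{thm:abstract_mr_non-zero:hoelder}. The only technical variation is in the interpolation step: the paper interpolates the operator bound $\|R(z,B(0))\|_{\mathcal{B}((D(B(0)),X)_{1/p,p},D(B(0)))}\lesssim|z|^{1/p-1}$ once and then performs a single contour integral, whereas you establish the two endpoint bounds on $X$ and $D(B(0))$ separately and interpolate afterwards via the $K$-functional --- this forces you to pass to some $\tilde\gamma>0$ for the $D(B(0))$-endpoint integral to converge, a step the paper avoids, but the outcome and the underlying mechanism are the same.
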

	
	\begin{proof}
		In the following we use a perturbation argument. We consider the the operator
		\[
			(R_0u_0)(t) = A(0) e^{-tA(0)}u_0,
		\]
		which is bounded from $(D(A(0)),X)_{1/p,p}$ to $L^p([0,T];X)$. By the arguments already given in the proof of Proposition~\ref{prop:mr_zero}, it suffices to establish the boundedness of $R - R_0\colon (D(A(0)),X)_{1/p,p} \to L^p([0,T];X)$ if $(A(t))_{t \in [0,T]}$ is replaced by $(A(t) + \mu)_{t \in [0,T]}$ for some fixed $\mu > 0$ large enough. Indeed, as seen in Proposition~\ref{prop:mr_zero}, one obtains the invertibility of $Q$ and the boundedness of $S$ for such $\mu$. Furthermore, we have seen that $(A(t) + \mu)_{t \in [0,T]}$ satisfies the assumptions of this theorem as well as inequality~\eqref{eq:shifted_hoelder}. Therefore the following calculations remain valid if one replaces $(A(t))_{t \in [0,T]}$ by $(A(t) + \mu)_{t \in [0,T]}$ and consequently one obtains maximal regularity for the family $(A(t) + \mu)_{t \in [0,T]}$. Once maximal regularity for $(A(t) + \mu)_{t \in [0,T]}$ is established for some $\mu > 0$, maximal regularity follows for all $\mu \in \IR$ by Lemma~\ref{lem:mr_shift}. Here we use the equivalence between the graph norms of $A(0)$ and $A(0) + \mu$ which passes to the interpolation spaces.
		
		First recall that $A(t) e^{-tA(t)}$ are bounded operators on $X$ because $-A(t)$ generate analytic semigroups. Now choose $\psi \in (\varphi, \frac{\pi}{2})$. Then for $x \in X$ and $\Gamma = \partial \Sigma_{\psi}$ we have
			\begin{align*}
				\MoveEqLeft A(t)e^{-tA(t)}x - A(0)e^{-tA(0)}x = \frac{1}{2\pi i} \int_{\Gamma} ze^{-tz} [R(z,A(t)) - R(z,A(0))] x \d z \\
				& = \frac{1}{2\pi i} \int_{\Gamma} ze^{-tz} R(z,A_{-1}(t)) [A_{-1}(t) - A(0)] R(z,A(0))x \d z,
			\end{align*}
		by the second resolvent identity. Testing with $x' \in X'$ gives
			\begin{align*}
				\MoveEqLeft \langle A(t)e^{-tA(t)}x - A(0)e^{-tA(0)}x, x' \rangle \\
				& = \frac{1}{2\pi i} \int_{\Gamma} ze^{-tz} \langle R(z,A_{-1}(t)) [A_{-1}(t) - A(0)] R(z,A(0))x, x' \rangle \d z \\
				& = \frac{1}{2\pi i} \int_{\Gamma} ze^{-tz} \langle (A_{-1}(t) - A(0)) R(z,A(0))x, R(z,A(t)')x' \rangle \d z.
			\end{align*}
		One has the resolvent estimates
		\[
			\norm{R(z,A(0))}_{\mathcal{B}(X,D(A(0)))} \lesssim 1,  \qquad \norm{R(z,A(0))}_{\mathcal{B}(D(A(0)),D(A(0)))} \lesssim \abs{z}^{-1}.
		\] 
		Using real interpolation we see that for $\theta \in [0,1]$ one has
		\[
			\norm{R(z,A(0))}_{\mathcal{B}((D(A(0)),X)_{1/p,p},D(A(0)))} \lesssim \normalabs{z}^{1/p - 1}.
		\] 
		Using assumption~\ref{thm:abstract_mr_non-zero:hoelder} together with this estimate, for $x \in (D(A(0)),X)_{1/p,p}$ we obtain the pointwise estimate
			\begin{align*}
				\MoveEqLeft \normalabs{\langle A(t)e^{-tA(t)}x - A(0)e^{-tA(0)}x, x' \rangle} \\
				& \le \frac{C}{2\pi} t^{\beta} \int_{\Gamma} \abs{z} e^{-t \Re z} \norm{R(z,A(0))x}_{D(A(0))} \norm{R(z,A(t)')x'}_{X'_{\gamma,A(t)'}} \d\abs{z} \\
				& \lesssim t^{\beta} \int_{\Gamma} \abs{z}^{1/p + \gamma - 1} e^{-t \Re z} \d\abs{z} \norm{x}_{(D(A(0)),X)_{1/p,p}} \norm{x'}_{X'} \\
				& \lesssim t^{\beta - 1/p - \gamma} \norm{x}_{(D(A(0)),X)_{1/p,p}} \norm{x'}_{X'}.
			\end{align*}
		Integrating over the time variable, we see that $R-R_0$ satisfies the estimate
			\begin{align*}
				\norm{(R - R_0)x}_{L^p([0,T];X)} \lesssim \biggl( \int_0^T t^{p(\beta - 1/p - \gamma)} \d t \biggr)^{1/p} \norm{x}_{(D(A(0)),X)_{1/p,p}}.
			\end{align*}
		Now, the assumption $\beta > \gamma$ guarantees that the above integral is finite and therefore the boundedness of $R\colon (D(A(0)),X)_{1/p,p} \to L^p([0,T];X)$ is shown.
	\end{proof}

	\begin{remark} 
		Note that in the general setting of~Theorem~\ref{thm:abstract_mr_non-zero} we cannot apply the extrapolation results for maximal regularity proved in~\cite[Corollary~3]{ChiFio14} and generalized in~\cite[Theorem~5.3]{ChiKro14} as these require both $(A(t))_{t \in [0,T]}$ and $(A(t)')_{t \in [0,T]}$ to satisfy conditions~\eqref{eq:uniform_sectorial} and~\eqref{eq:at}. However, we will see later that even in concrete applications~\eqref{eq:at} can only be verified for $(A(t))_{t \in [0,T]}$.
	\end{remark}
	
	\begin{remark}\label{rem:comparison_form}
		Let $a\colon [0,T] \times V \times V \to \IR$ be a family of forms for which $t \mapsto a(t,u,v)$ is measurable for all $u, v \in V$. Moreover, we require that there exist positive constants $\alpha_0$ and $M$ such that for all $t \in [0,T]$ and all $u,v \in V$ one has
		\begin{align*}
			\abs{a(t,u,v)} \le M \norm{u}_V \norm{v}_V, \qquad a(t,u,u) \ge \alpha_0 \norm{u}_V^2.
		\end{align*}
		Maximal regularity for forms then asks under which conditions on the forms and the initial data one has maximal regularity for the family $(A(t))_{t \in [0,T]}$, where $A(t)$ is the operator associated to $a(t, \cdot, \cdot)$. Note that there is a Banach scale adapted to this problem. For this we set
		\begin{equation*}
			H_{\alpha,A(t)} =
			\begin{cases}
				[H,V]_{2\alpha} & \text{if } \alpha \in [0, \frac{1}{2}] \\
			 	[V,D(A(t))]_{2(\alpha - \frac{1}{2})} & \text{if } \alpha \in [\frac{1}{2},1]. \\
			\end{cases}
		\end{equation*}
		Analogously, one interpolates between the spaces $H$, $V_{-1,A(t)}$ and $H_{-1,A(t)}$ in the case $\alpha \in [-1,0]$. In the setting of time dependent forms with independent form domains the extrapolated operators $A(t)_{-1/2}$ get stable domains. Furthermore, the scale $H_{\alpha,A(t)}$ is equivalent to the complex interpolation scale associated to the operator $A(t)$ if the form $a$ has the Kato square root property, i.e.\ if $[H,D(A(t))]_{1/2} \simeq D(A(t)^{1/2})$. Although we do not have presented a proof of Theorem~\ref{thm:abstract_mr_non-zero} for abstract Banach scales in order to simplify matters, one can verify that the proof also works for the above Banach scale.
		
		Let us now check the assumptions of Theorem~\ref{thm:abstract_mr_non-zero} in the form setting. First note that~\ref{thm:abstract_mr_non-zero:r_boundedness} is satisfied because our assumptions on the forms guarantee uniform sectorial estimates for all associated operators $(A(t))_{t \in [0,T]}$ and because boundedness in operator norm is equivalent to $\mathcal{R}$-boundedness on Hilbert spaces. Secondly, the Hölder estimate in assumption~\ref{thm:abstract_mr_non-zero:hoelder} is satisfied for $\gamma = \frac{1}{2}$ if for some $C \ge 0$ and $\beta > \frac{1}{2}$ the forms satisfy for all $u, v \in V$
		\[
			\abs{a(t,u,v) - a(s,u,v)} \le C \abs{t-s}^{\beta} \norm{u}_V \norm{v}_V \qquad \text{for all } s, t \in [0,T].
		\]
		This essentially reproduces the results in~\cite{HaaOuh15}. Further, by taking the flexibility of the choice of $H'_{\gamma,A(t)'}$ into account, we also obtain the results proved~\cite{AreMon14} and~\cite{Ouh15}. Under more restrictive assumptions different positive results are known. In fact, if the forms are additionally assumed to be symmetric, then one has maximal regularity if the time dependence is of bounded variation~\cite{Die15}. 
	\end{remark}

	In order to apply Theorem~\ref{thm:abstract_mr_non-zero} one has to deal with two different assumptions: on the one hand one must check the $\mathcal{R}$-boundedness condition and on the other hand the Hölder estimate has to be verified. We deal with the first one in Section~\ref{sec:r_boundedness} and will verify the second one in concrete applications in the sections thereafter.

\section{Assumptions Satisfied for General Elliptic Operators}\label{sec:r_boundedness}

	In this section we give sufficient conditions for assumption~\ref{thm:abstract_mr_non-zero:r_boundedness} of Theorem~\ref{thm:abstract_mr_non-zero} to hold and discuss a further technical issue, namely uniform estimates for bounded imaginary powers as needed in Proposition~\ref{prop:bip_fractional}. For this we need further results on $\mathcal{R}$-boundedness and its generalization $\mathcal{R}_q$-boundedness which are special for $L^p$-spaces (or more general for Banach lattices with finite concavity). Further, Gaussian estimates play a central role for the verification of the needed estimates. 
	
	\subsection{The \texorpdfstring{$\mathcal{R}$}{R}-boundedness assumption}
	
	As a motivation we note that as a consequence of the Khintchine inequality~\cite[Appendix~C]{Gra08} the $\mathcal{R}$-boundedness of a family $\mathcal{T} \subset \mathcal{B}(L^p(\Omega))$ for $p \in [1, \infty)$ is equivalent to the validity of the square function estimate
	\begin{equation*}
		\biggnorm{\biggl( \sum_{k=1}^n \abs{T_kf_k}^2 \biggr)^{1/2}}_p \le C \biggnorm{\biggl( \sum_{k=1}^n \abs{f_k}^2 \biggr)^{1/2}}_p
	\end{equation*}
	for a constant $C \ge 0$ independent of $n \in \IN$, $T_1, \ldots, T_n \in \mathcal{T}$ and $f_1, \ldots, f_n \in L^p(\Omega)$. Using this characterization, one obtains a straightforward generalization by replacing the squares by general exponents.
	
	\begin{definition}
		Let $p, q \in [1, \infty]$ and $(\Omega, \Sigma, \mu)$ be a measure space. A subset $\mathcal{T} \subset \mathcal{B}(L^p(\Omega))$ is called \emph{$\mathcal{R}_q$-bounded} if there exists $C \ge 0$ such that for all $n \in \IN$, $T_1, \ldots, T_n \in \mathcal{T}$ and $f_1, \ldots, f_n \in L^p(\Omega)$ one has
			\begin{align*}
				\biggnorm{\biggl( \sum_{k=1}^n \abs{T_kf_k}^q \biggr)^{1/q}}_p & \le C \biggnorm{\biggl( \sum_{k=1}^n \abs{f_k}^q \biggr)^{1/q}}_p & \text{for } q \in [1, \infty), \\
				\normalnorm{\sup_{k} \abs{T_kf_k}}_p & \le C \normalnorm{\sup_k \abs{f_k}}_p & \text{for } q = \infty.
			\end{align*}
		We denote by $\mathcal{R}_q(\mathcal{T})$ the smallest constant $C$ for which the above inequality holds.
	\end{definition}
	
	Note that $\mathcal{R}_q$-bounded subsets are always bounded in operator norm and that the converse holds in the special case $p = q$. Further, $\mathcal{R}_2$-boundedness is equivalent to $\mathcal{R}$-boundedness. We need the permanence properties of $\mathcal{R}_q$-bounded sets given in the next two lemmata. For the proof of the first one see~\cite[Proposition~3.1.10]{Ull10}.

	\begin{lemma}\label{lem:r_boundedness_l1}
		Let $(\Omega, \Sigma,\mu)$ be a $\sigma$-finite measure space, $p, q \in (1, \infty)$ and $\tau \subset \mathcal{B}(L^p(\Omega))$ an $\mathcal{R}_q$-bounded set. Then
			\begin{equation*}
				\mathcal{T} \coloneqq \left\{ \int_{\Omega} h(\omega) N(\omega) \d\mu(\omega): \norm{h}_{L^1(\Omega)} \le 1, N\colon \Omega \to \tau \text{ measurable} \right\}
			\end{equation*}
		is $\mathcal{R}_q$-bounded with $\mathcal{R}_q(\mathcal{T}) \le 2 \mathcal{R}_q(\tau)$.
	\end{lemma}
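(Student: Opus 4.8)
The plan is to reduce the averaged family $\mathcal{T}$ to the original $\mathcal{R}_q$-bounded family $\tau$ by a discretization/approximation argument followed by an application of the $\mathcal{R}_q$-bound for $\tau$. First I would fix $n \in \IN$, operators $S_1, \dots, S_n \in \mathcal{T}$ of the form $S_k = \int_{\Omega} h_k(\omega) N_k(\omega)\d\mu(\omega)$ with $\norm{h_k}_{L^1} \le 1$ and $N_k \colon \Omega \to \tau$ measurable, and functions $f_1, \dots, f_n \in L^p(\Omega)$. The goal is to bound $\bignorm{(\sum_k \abs{S_k f_k}^q)^{1/q}}_p$ by $2\mathcal{R}_q(\tau) \bignorm{(\sum_k \abs{f_k}^q)^{1/q}}_p$. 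The key pointwise tool is that, for any measurable scalar functions $(g_{k,\omega})$, one has (by Minkowski's integral inequality applied in the $\ell^q$-direction, using $\norm{h_k}_{L^1}\le 1$)
\[
	\biggl( \sum_{k=1}^n \biggl\lvert \int_{\Omega} h_k(\omega) g_{k,\omega} \d\mu(\omega) \biggr\rvert^q \biggr)^{1/q} \le \int_{\Omega} \biggl( \sum_{k=1}^n \abs{h_k(\omega)}^q \abs{g_{k,\omega}}^q \biggr)^{1/q} \d\mu(\omega),
\]
but since this is a pointwise-in-$x$ statement about the functions $S_k f_k$ one must be a little careful: $S_k f_k = \int_\Omega h_k(\omega) N_k(\omega) f_k \d\mu(\omega)$, and the inner integrand $N_k(\omega)f_k$ is an $L^p$-function, so I would first apply the above $\ell^q$-Minkowski inequality pointwise in $x$ to get $(\sum_k \abs{S_k f_k(x)}^q)^{1/q} \le \int_\Omega (\sum_k \abs{h_k(\omega)}^q \abs{(N_k(\omega)f_k)(x)}^q)^{1/q}\d\mu(\omega)$, then take $L^p(dx)$-norms and use Minkowski's integral inequality once more to pull the $\Omega$-integral outside.

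After those two applications of Minkowski, the problem reduces to bounding, for each fixed $\omega$, the quantity $\bignorm{(\sum_k \abs{h_k(\omega)}^q \abs{(N_k(\omega)f_k)(x)}^q)^{1/q}}_{L^p(dx)}$, and then integrating against $\d\mu(\omega)$ with the constraint $\sum$-type weights $\abs{h_k(\omega)}$. At fixed $\omega$ the operators $N_1(\omega),\dots,N_n(\omega)$ all lie in $\tau$, so applying the defining $\mathcal{R}_q$-inequality for $\tau$ to the operators $N_k(\omega)$ and the functions $h_k(\omega) f_k$ (absorbing the nonnegative scalars $\abs{h_k(\omega)}$ into the $f_k$) gives
\[
	\biggnorm{\biggl( \sum_{k=1}^n \abs{h_k(\omega)}^q \abs{N_k(\omega)f_k}^q \biggr)^{1/q}}_p \le \mathcal{R}_q(\tau) \biggnorm{\biggl( \sum_{k=1}^n \abs{h_k(\omega)}^q \abs{f_k}^q \biggr)^{1/q}}_p.
\]
The remaining step is to integrate this over $\omega$ and to control $\int_\Omega \bignorm{(\sum_k \abs{h_k(\omega)}^q\abs{f_k}^q)^{1/q}}_p \d\mu(\omega)$ by $2\bignorm{(\sum_k\abs{f_k}^q)^{1/q}}_p$. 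Here I would use that pointwise in $x$, $\int_\Omega (\sum_k \abs{h_k(\omega)}^q \abs{f_k(x)}^q)^{1/q}\d\mu(\omega) \le \int_\Omega \sup_k\abs{h_k(\omega)} \cdot (\sum_k\abs{f_k(x)}^q)^{1/q}\d\mu(\omega)$ is the wrong direction; instead the slicker route is the standard trick: $\sup_k \abs{h_k(\omega)} \le \sum_k \abs{h_k(\omega)}$, or rather one writes $(\sum_k \abs{h_k(\omega)}^q\abs{f_k}^q)^{1/q} \le (\max_k\abs{h_k(\omega)})(\sum_k\abs{f_k}^q)^{1/q}$ and observes $\int_\Omega \max_k\abs{h_k(\omega)}\d\mu(\omega) \le \sum_k\int_\Omega\abs{h_k(\omega)}\d\mu(\omega) = \sum_k \norm{h_k}_{L^1} \le n$ — which is too lossy. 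The correct, loss-free bound uses that one may assume without loss of generality $n$ is a power of two or, more cleanly, that by a direct-sum/renormalization argument (the "$2\mathcal{R}_q(\tau)$" in the statement already signals some slack) one discretizes $\Omega$ into finitely many pieces on which $h_k$ is essentially constant and $N_k$ essentially constant, reducing $\mathcal{T}$ to genuine convex-type combinations $\sum_j c_j T_j$ with $\sum_j \abs{c_j}\le 1+\varepsilon$ of elements $T_j\in\tau$, where the iteration property of $\mathcal{R}_q$-bounds (triangle inequality in the $\ell^q(\ell^q)$ norm, exactly as for $\mathcal{R}$-boundedness) gives the constant $1+\varepsilon$ rather than $n$; letting $\varepsilon\to 0$ yields $\mathcal{R}_q(\tau)$, and the factor $2$ covers the measurability/approximation overhead.

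The main obstacle I anticipate is precisely this last point: getting the constant to be $2\mathcal{R}_q(\tau)$ rather than something growing with $n$. The naive Minkowski estimate loses a factor, so the real content is the reduction of the integral average to a (nearly) $\ell^1$-normalized discrete average of operators from $\tau$, together with the permanence of $\mathcal{R}_q$-bounds under such averages — which itself follows because the $\mathcal{R}_q$-norm of a set is unchanged by passing to its absolutely convex hull (an elementary consequence of the triangle inequality for the mixed norm $\norm{(\sum_k\abs{\cdot}^q)^{1/q}}_p$, analogous to the convex-hull stability of $\mathcal{R}$-boundedness). Everything else — the two applications of Minkowski's integral inequality, the measurability of $x\mapsto \sup_\omega\abs{h(\omega)}$ and of the relevant integrands, and the passage to the limit — is routine. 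Since the reference~\cite[Proposition~3.1.10]{Ull10} is cited, I would in fact just invoke it; the sketch above indicates the argument one would reconstruct if needed.
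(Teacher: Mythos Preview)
Your initial double-Minkowski approach is a genuine dead end, and you correctly diagnose this: after reducing to $\int_\Omega \bigl\lVert(\sum_k |h_k(\omega)|^q|f_k|^q)^{1/q}\bigr\rVert_p\,d\mu(\omega)$, the bound you need simply fails in general (take $h_k = \mathds{1}_{\{k\}}$ on $\Omega = \{1,\dots,n\}$ with counting measure and $f_k$ with disjoint supports to see the constant blow up like $n^{1-1/p}$). So the first half of the proposal should be discarded, not merely flagged as ``lossy.''

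The fallback you then sketch---discretize the integral to an absolutely convex combination $\sum_j c_j T_j$ with $\sum_j|c_j|\le 1+\varepsilon$, $T_j\in\tau$, and use stability of $\mathcal{R}_q$-bounds under such combinations---is exactly the paper's argument: it observes that every element of $\mathcal{T}$ lies in the strong-operator-topology closure of the absolutely convex hull of $\tau$, which is $\mathcal{R}_q$-bounded with constant $\le 2\mathcal{R}_q(\tau)$. Two corrections to your account: the remark about ``$n$ a power of two'' is irrelevant here and should be dropped; and the factor $2$ is \emph{not} ``measurability/approximation overhead'' but comes from passing from the convex hull (nonnegative coefficients summing to $\le 1$, where the triangle inequality in the mixed norm gives the bound $\mathcal{R}_q(\tau)$ exactly) to the \emph{absolutely} convex hull (split $c_j = c_j^+ - c_j^-$, each a sub-convex combination). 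The SOT-closure step preserves the bound because the defining inequality involves only finitely many vectors at a time. With these fixes your outline matches the paper's one-line justification and the cited reference.
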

	
	This result follows from the fact that the elements of $\mathcal{T}$ lie in the closure of the absolute convex hull of $\tau$ in the strong operator topology which is $\mathcal{R}_q$-bounded with bound at most $2 \mathcal{R}_q(\tau)$. The next result is a consequence of Lemma~\ref{lem:r_boundedness_l1} and the Poisson integral formula (compare with the proof of~\cite[Example~2.16]{KunWei04}).
	
	\begin{lemma}\label{lem:r_boundedness_analytic}
		Let $p, q \in (1, \infty)$ and $I$ be an arbitrary index set. Further suppose that for $i \in I$ one has a family of analytic functions $\Sigma_{\theta'} \ni \lambda \mapsto N_i(\lambda) \in \mathcal{B}(L^p(\Omega))$ for $\theta' \in (0, \pi)$ satisfying
			\[
				\tau \coloneqq \R[q]{N_i(\lambda): \lambda \in \partial \Sigma_{\theta}, \lambda \neq 0, i \in I} < \infty.
			\]
		for some $\theta \in (0,\theta')$. Then
			\[
				\R[q]{N_i(\lambda): \lambda \in \Sigma_{\theta}, i \in I} \le 2 \mathcal{R}_q(\tau).
			\]
	\end{lemma}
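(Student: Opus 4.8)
The plan is to recover each interior value $N_i(\lambda)$, $\lambda \in \Sigma_\theta$, from the boundary values $N_i(\mu)$, $\mu \in \partial\Sigma_\theta$, by a Poisson-type formula, and then to read off the assertion from the permanence property in Lemma~\ref{lem:r_boundedness_l1}. Concretely, I aim to produce, for every $\lambda \in \Sigma_\theta$ and every $i \in I$, a representation
\[
	N_i(\lambda) = \int_{\partial\Sigma_\theta} h_\lambda(\mu)\, N_i(\mu) \, \d\abs{\mu}
\]
with a \emph{nonnegative} density $h_\lambda \in L^1(\partial\Sigma_\theta)$ of total mass $\norm{h_\lambda}_{L^1(\partial\Sigma_\theta)} = 1$ (independent of $i$), the map $\mu \mapsto N_i(\mu)$ being strongly measurable with values in $\tau$. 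Granting this, $N_i(\lambda)$ lies in the set $\mathcal{T}$ attached to $\tau$ in Lemma~\ref{lem:r_boundedness_l1}, applied with $\partial\Sigma_\theta$ carrying arclength measure (a $\sigma$-finite measure space); hence $\{N_i(\lambda) : \lambda \in \Sigma_\theta, i \in I\} \subseteq \mathcal{T}$, and that lemma yields $\R[q]{N_i(\lambda) : \lambda \in \Sigma_\theta, i \in I} \le \mathcal{R}_q(\mathcal{T}) \le 2\mathcal{R}_q(\tau)$, which is the assertion.

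To obtain the representation I would straighten the sector by a conformal map: $z \mapsto z^{\pi/(2\theta)}$ maps $\Sigma_\theta$ onto the right half-plane $\{\Re w > 0\}$ and $\partial\Sigma_\theta$ onto the imaginary axis (restricting to a subsector $\Sigma_{\theta''}$ with $\theta < \theta'' < \theta'$ if one needs this explicit branch to stay injective; alternatively use the principal logarithm onto a strip). Since $\tau$ is $\mathcal{R}_q$-bounded it is bounded in operator norm, so $\normalnorm{N_i(\mu)} \le \mathcal{R}_q(\tau)$ on $\partial\Sigma_\theta$ uniformly in $i$; together with analyticity of $N_i$ on the \emph{strictly larger} sector $\Sigma_{\theta'}$ -- plus the mild growth control present in the applications, where the $N_i$ are uniformly bounded resolvent-type families -- the maximum principle in Phragm\'en--Lindel\"of form (which also controls the behaviour towards $0$ and $\infty$) gives a uniform interior bound $\sup_i \sup_{\lambda \in \Sigma_\theta} \normalnorm{N_i(\lambda)} < \infty$. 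For bounded analytic functions continuous up to the boundary, the classical Poisson formula on the half-plane applies: writing $\tilde N_i$ for the transported function, for $\Re w > 0$ one has
\[
	\tilde N_i(w) = \frac{1}{\pi} \int_{-\infty}^{\infty} \frac{\Re w}{\abs{w - is}^2}\, \tilde N_i(is) \, \d s,
\]
and this scalar identity upgrades to the operator-valued one by pairing with vectors in $L^p(\Omega)$ and functionals in its dual. The kernel $s \mapsto \frac{1}{\pi}\frac{\Re w}{\abs{w - is}^2}$ is nonnegative with integral $1$ over $\IR$; pulling the identity back through the conformal map only introduces a nonnegative Jacobian, so both the sign and the unit $L^1$-mass of the density survive, and one arrives at exactly the displayed representation. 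Measurability of $\mu \mapsto N_i(\mu)$ on $\partial\Sigma_\theta$ is immediate from analyticity, and boundedness of the integrand makes the integral converge as a Bochner integral when applied to any vector; hence $N_i(\lambda) \in \mathcal{T}$ in the sense of Lemma~\ref{lem:r_boundedness_l1}.

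The remaining step is formal: Lemma~\ref{lem:r_boundedness_l1} gives $\mathcal{R}_q(\mathcal{T}) \le 2\mathcal{R}_q(\tau)$, and the inclusion above finishes the proof. I expect the one genuinely delicate point to be the passage from the boundary bound to a uniform interior bound for the $N_i$ -- i.e.\ legitimising the Poisson representation -- which is precisely where the hypothesis that the $N_i$ are analytic on the strictly larger sector $\Sigma_{\theta'}$ (and, in applications, uniformly bounded there) is used; everything else is the classical half-plane Poisson formula together with Lemma~\ref{lem:r_boundedness_l1}, whose own proof identifies such nonnegatively weighted, unit-mass integral averages with elements of the closure of the absolutely convex hull of $\tau$ in the strong operator topology.
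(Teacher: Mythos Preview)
Your proposal is correct and follows exactly the route the paper indicates: the paper's entire proof is the one-line remark that the lemma ``is a consequence of Lemma~\ref{lem:r_boundedness_l1} and the Poisson integral formula (compare with the proof of~\cite[Example~2.16]{KunWei04}),'' and you have unpacked precisely this --- conformally mapping the sector to a half-plane, writing interior values as Poisson averages of boundary values with a nonnegative unit-mass kernel, and feeding the result into Lemma~\ref{lem:r_boundedness_l1}. You also correctly isolate the one genuinely delicate point (an a~priori interior bound is needed to justify the Poisson representation, which the bare hypotheses of the lemma do not supply but which holds in every application in the paper); the paper itself leaves this implicit.
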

	
	We are now ready to prove the following domination result for families of sectorial operators.
	
	\begin{proposition}\label{prop:r_boundedness}
		Let $(\Omega, \Sigma, \mu)$ be a $\sigma$-finite measure space and $p \in (1, \infty)$. Suppose that $(A(t))_{t \in [0,T]}$ is a family of sectorial operators on $L^p(\Omega, \Sigma, \mu)$ with the following properties. 
		\begin{thm_enum}
			\item\label{prop:r_boundedness:uniform_boundedness} There exists $\varphi \in (0,\frac{\pi}{2})$ such that $\sigma(A(t)) \subset \overline{\Sigma_{\varphi}}$ for all $t \in [0,T]$ and
				\[
					\sup_{t \in [0,T]} \sup_{\lambda \not\in \overline{\Sigma_{\varphi}}} \norm{\lambda R(\lambda, A(t))} < \infty.
				\]
			\item\label{prop:r_boundedness:domination} For $p \neq q$ there exists an $\mathcal{R}_q$-bounded family $\mathcal{T}$ dominating the semigroups generated by $-A(t)$ in the following sense: for all $t \in [0,T]$ and all $s \ge 0$ there exists $T \in \mathcal{T}$ such that for all $f \in L^p(\Omega)$ one has $\normalabs{e^{-sA(t)}f}(x) \le (T \abs{f})(x)$ almost everywhere.
		\end{thm_enum} 
		 Then for all $r \in (p,q)$ respectively $r \in (q,p)$ there exists some $\theta \in (\varphi, \frac{\pi}{2})$ with
			\[
				\R[r]{\lambda R(\lambda, A(t)): \lambda \not\in \overline{\Sigma_{\theta}}, t \in [0,T]} < \infty.
			\]
		In particular, if the interval is such that $2 \in (p,q)$ respectively $2 \in (q,p)$, then
			\[
				\R{\lambda R(\lambda, A(t)): \lambda \not\in \overline{\Sigma_{\theta}}, t \in [0,T]} < \infty.
			\]
	\end{proposition}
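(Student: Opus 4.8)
The plan is to identify the rescaled resolvents with Laplace transforms of the semigroups, to feed the domination hypothesis into Lemma~\ref{lem:r_boundedness_l1} so as to obtain $\mathcal{R}_q$-boundedness on a subsector of the left half-plane, to shift the exponent from $q$ to $r$ by interpolating $\mathcal{R}$-boundedness, and finally to widen the sector past the half-plane by means of the analyticity statement Lemma~\ref{lem:r_boundedness_analytic}. We may assume $p \neq q$, as otherwise the interval $(p,q)$ is empty.

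First I would substitute $\mu = -\lambda$ and study $N_t(\mu) \coloneqq \mu(\mu + A(t))^{-1} = \lambda R(\lambda, A(t))$. Since $-\mu \in \rho(A(t))$ precisely for $\lvert \arg \mu \rvert < \pi - \varphi$, the map $\mu \mapsto N_t(\mu)$ is analytic on $\Sigma_{\pi - \varphi}$, and assumption~\ref{prop:r_boundedness:uniform_boundedness} yields $\sup_{t} \sup_{\mu \in \Sigma_{\pi - \varphi}} \norm{N_t(\mu)}_{\mathcal{B}(L^p)} < \infty$, i.e.\ on an $L^p$-space the family $\{ N_t(\mu) : \mu \in \Sigma_{\pi-\varphi},\, t \in [0,T] \}$ is $\mathcal{R}_p$-bounded. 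Moreover, $-A(t)$ generating a bounded analytic semigroup, for $\Re \mu > 0$ one has the absolutely convergent representation $N_t(\mu) f = \int_0^\infty \mu e^{-\mu s} e^{-s A(t)} f \d s$.

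Now fix $\nu \in (0, \tfrac{\pi}{2})$. For $\mu \in \overline{\Sigma_\nu}$ with $\mu \neq 0$ the weight $h_\mu(s) \coloneqq |\mu| e^{-(\Re \mu) s}$ satisfies $\norm{h_\mu}_{L^1(0,\infty)} = |\mu|/\Re\mu \le 1/\cos\nu$, and by assumption~\ref{prop:r_boundedness:domination} there are positive $T_s \in \mathcal{T}$ with $\abs{e^{-sA(t)}f} \le T_s\abs{f}$ pointwise; hence $\abs{N_t(\mu)f} \le \tfrac{1}{\cos\nu}\bigl(\int_0^\infty (\cos\nu\, h_\mu(s)) T_s \d s\bigr)\abs{f}$ pointwise. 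The bracketed operator belongs to the set produced by Lemma~\ref{lem:r_boundedness_l1} (applied on $(0,\infty)$ with Lebesgue measure and the $\mathcal{R}_q$-bounded set $\mathcal{T}$), so it is positive and $\mathcal{R}_q$-bounded with bound $2\mathcal{R}_q(\mathcal{T})$; since pointwise domination by a positive $\mathcal{R}_q$-bounded family transfers $\mathcal{R}_q$-boundedness, $\{ N_t(\mu) : \mu \in \overline{\Sigma_\nu},\, t \}$ is $\mathcal{R}_q$-bounded. Interpolating this with the $\mathcal{R}_p$-bound of the first step — using that $\mathcal{R}_{r}$-boundedness of a set $\mathcal{S} \subset \mathcal{B}(L^p)$ is the uniform boundedness of the induced diagonal maps on the spaces $L^p(\ell^r_n)$, which form a complex interpolation scale in $r$ — one obtains that $\{ N_t(\mu) : \mu \in \overline{\Sigma_\nu},\, t \}$ is $\mathcal{R}_r$-bounded for every $r$ between $p$ and $q$.

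At this stage one already has the asserted estimate with some $\theta > \tfrac{\pi}{2}$ (namely $\theta = \pi - \nu$), but the statement demands $\theta < \tfrac{\pi}{2}$; the remaining — and decisive — point is therefore to prove $\mathcal{R}_r$-boundedness of $\{N_t(\mu) : \mu \in \Sigma_\psi,\, t\}$ for some $\psi \in (\tfrac{\pi}{2}, \pi - \varphi)$, which then gives the claim with $\theta \coloneqq \pi - \psi \in (\varphi, \tfrac{\pi}{2})$. Here I would use that $N_t(\cdot)$ is analytic on $\Sigma_{\pi-\varphi} \supsetneq \overline{\Sigma_\psi}$ and apply Lemma~\ref{lem:r_boundedness_analytic}, which reduces everything to $\mathcal{R}_r$-boundedness on the two boundary rays $\partial\Sigma_\psi$. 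On those rays $\Re\mu < 0$, so the Laplace integral must be taken along a contour $s \mapsto e^{i\omega}s$ rotated into the sector of analyticity of the semigroups; admissible rotation angles $\omega$ exist because $\psi < \pi - \varphi$, the analyticity angle being controlled by assumption~\ref{prop:r_boundedness:uniform_boundedness}. The argument of the preceding paragraph is then rerun with the rotated semigroups $e^{-zA(t)}$. I expect exactly this last step to be the main obstacle: it takes us outside the range in which $N_t(\mu)$ is directly an absolutely convergent integral of the real-time semigroup, and hence requires a domination statement for the rotated semigroups rather than only for $e^{-sA(t)}$, $s \ge 0$. The concluding assertion is then free of charge: when $2$ lies between $p$ and $q$, take $r = 2$ and recall that $\mathcal{R}_2$-boundedness coincides with $\mathcal{R}$-boundedness.
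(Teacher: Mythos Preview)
Your argument is correct up to the point where you obtain $\mathcal{R}_r$-boundedness of $\{N_t(\mu):\mu\in\overline{\Sigma_\nu},\,t\}$ for every $\nu<\frac{\pi}{2}$; the obstacle you flag in the last paragraph is real and is not removed by the route you suggest. Rotating the Laplace integral would require pointwise domination of the complex-time semigroups $e^{-zA(t)}$, which assumption~\ref{prop:r_boundedness:domination} does not provide, and Lemma~\ref{lem:r_boundedness_analytic} only fills in a sector whose boundary rays are already known to be $\mathcal{R}_r$-bounded --- it cannot enlarge the sector beyond what you already have.

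The paper closes the gap by coupling the angle and the exponent in a single \emph{Stein} interpolation rather than interpolating at a fixed angle. Fix $\theta_0\in(\varphi,\frac{\pi}{2})$ and $\theta_1>\frac{\pi}{2}$; for finitely many $t_1,\ldots,t_n\in[0,T]$ and $s_1,\ldots,s_n>0$ consider the analytic family
\[
	M(\lambda)(f_k)_{k=1}^n \coloneqq \bigl(s_k e^{i\lambda} R(s_k e^{i\lambda},A(t_k))f_k\bigr)_{k=1}^n
\]
on the strip $\{\theta_0\le\Re\lambda\le\theta_1\}$. On the line $\Re\lambda=\theta_0$ the resolvent parameters sit on the ray of argument $\theta_0>\varphi$, so assumption~\ref{prop:r_boundedness:uniform_boundedness} gives uniform boundedness on $L^p(\Omega;\ell_n^p)$; on the line $\Re\lambda=\theta_1$ they sit on a ray of argument $>\frac{\pi}{2}$, where your own Laplace-transform step yields uniform boundedness on $L^p(\Omega;\ell_n^q)$. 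The abstract Stein interpolation theorem then gives boundedness of $M(\theta)$ on $L^p(\Omega;\ell_n^r)$ at the intermediate angle $\theta=(1-\alpha)\theta_0+\alpha\theta_1$ with $\frac{1}{r}=\frac{1-\alpha}{p}+\frac{\alpha}{q}$. For a prescribed $r$ between $p$ and $q$ one chooses $\theta_1$ close enough to $\frac{\pi}{2}$ that this $\theta$ drops below $\frac{\pi}{2}$. This settles one boundary ray; the conjugate ray is handled symmetrically, and then Lemma~\ref{lem:r_boundedness_analytic} fills in the sector. The device is that the analytic parameter in Stein's theorem moves the angle and the Lebesgue exponent simultaneously, so one trades the surplus in angle available at exponent $p$ against the deficit at exponent $q$.
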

	\begin{proof}
		It follows from the domination assumption~\ref{prop:r_boundedness:domination} that the set $\{ e^{-sA(t)}: s \ge 0, t \in [0,T] \}$ is $\mathcal{R}_q$-bounded. Now, recall that in the strong operator topology
			\[
				\lambda R(\lambda, -A(t)) = \int_0^{\infty} \lambda e^{-\lambda t} e^{-s A(t)} \d s
			\]
		for all $\Re \lambda > 0$. This representation applied to Lemma~\ref{lem:r_boundedness_l1} shows that for all $\theta' \in (\frac{\pi}{2}, \pi)$ the set $\{ \lambda R(\lambda, A(t)): \lambda \not\in \overline{\Sigma_{\theta'}}, t \in [0,T] \}$ is $\mathcal{R}_q$-bounded. Further, it follows from the uniform boundedness assumption~\ref{prop:r_boundedness:uniform_boundedness} that $\{ \lambda R(\lambda, A(t)): \lambda \not\in \overline{\Sigma_{\varphi}}, t \in [0,T] \}$ is $\mathcal{R}_p$-bounded. We now employ an interpolation argument. Let $\theta_0 > \varphi$ and $\theta_1 > \frac{\pi}{2}$. For $n \in \IN$ fix $t_1, \ldots, t_n \in [0,T]$ and $s_1, \ldots, s_n > 0$. Now, for $\lambda$ in the strip $S = \{ \lambda \in \IC: \theta_0 \le \Re \lambda \le \theta_1 \}$ consider the linear operators
			\begin{align*}
				M(\lambda)\colon L^p(\Omega; \ell^{\infty}_n) & \to L^p(\Omega; \ell^{\infty}_n) \\
				(f_1, \ldots, f_n) & \mapsto (s_1 e^{i\lambda} R(s_1 e^{i\lambda}, A(t_1))f_1, \ldots, s_1 e^{i\lambda} R(s_n e^{i \lambda}, A(t_n))f_n).
			\end{align*}
		depending analytically on $\lambda$. Reformulating the findings of the first part of the proof, we see that
			\[
				M(\theta_0 + iy)\colon L^p(\Omega; \ell^p_n) \to L^p(\Omega; \ell^p_n) \quad \text{and} \quad M(\theta_1 + iy)\colon L^p(\Omega; \ell^q_n) \to L^p(\Omega; \ell^q_n)
			\]
		are bounded for $y \in \IR$ with $\norm{M(\theta_j + i y)} \le C$ for $j = 0, 1$ and some $C \ge 0$. As a consequence of the abstract Stein interpolation theorem~\cite[Theorem~2.1]{Voi92} we obtain for all $\alpha \in (0,1)$ that the operator $M(\theta)$ is a bounded operator in $L^p(\Omega; \ell^r_n)$ with $\norm{M(\theta)} \le C$, where $\theta \in (\theta_0, \theta_1)$ and $r$ satisfy
			\[
				\theta = (1-\alpha) \theta_0 + \alpha \theta_1 \qquad \text{and} \qquad \frac{1}{r} = (1-\alpha) \frac{1}{p} + \alpha \frac{1}{q}.
			\]
		Note that the constant $C$ and the involved parameters are independent of $n \in \IN$, $s_1, \ldots, s_n$ and $t_1, \ldots, t_n$. Now, for fixed $r$ between $p$ and $q$ and therefore for fixed $\alpha \in (0,1)$ the angle $\theta$ is smaller than $\frac{\pi}{2}$ provided $\theta_1$ is chosen close enough to $\frac{\pi}{2}$. Consequently, for every $r$ between $p$ and $q$ there exists $\theta' \in (\varphi, \frac{\pi}{2})$ with
			\[
				\R[r]{ \lambda R(\lambda, A(t)): t \in [0,T], \lambda \in \partial \Sigma_{\theta'} \setminus \{0\}, \arg \lambda > 0} < \infty.
			\]
		We repeat the argument for $e^{-i\lambda}$. The assertion then follows from Lemma~\ref{lem:r_boundedness_analytic}.
	\end{proof}
	
	The above abstract criterion is particularly useful when the semigroups generated by $-A(t)$ are given by kernels satisfying uniform bounds. As a particular important instance we discuss Gaussian estimates in the next result.
	
	\begin{proposition}\label{prop:r_boundedness_kernel}
		Let $(\Omega, \Sigma, \mu)$ be a $\sigma$-finite measure space and $p \in (1, \infty)$. Suppose that $(A(t))_{t \in [0,T]}$ is a family of sectorial operators on $L^p(\Omega, \Sigma, \mu)$ such that the following holds. 
		\begin{thm_enum}
			\item\label{prop:r_boundedness_kernel:uniform_boundedness} There exists $\varphi \in (0,\frac{\pi}{2})$ such that $\sigma(A(t)) \subset \overline{\Sigma_{\varphi}}$ for all $t \in [0,T]$ and
				\[
					\sup_{t \in [0,T]} \sup_{\lambda \not\in \overline{\Sigma_{\varphi}}} \norm{\lambda R(\lambda, A(t))} < \infty.
				\]
			\item\label{prop:r_boundedness_kernel:kernel} The semigroups $(e^{-sA(t)})_{s \ge 0}$ satisfy uniform Gaussian estimates, i.e.\ for all $s > 0$ and $t \in [0,T]$ the operators $e^{-sA(t)}$ are given by kernels $k_{t}(x,y,s)$ for which there exist constants $C \ge 0$, $\beta > 0$ and $\omega_1 \in \IR$ such that for all $s > 0$ the estimate
				\[
					\abs{k_t(x,y,s)} \le C s^{-N/2} e^{\omega_1 s} \exp \biggl( -\frac{\abs{x-y}^2}{4\beta s} \biggr)
				\]
			holds almost everywhere on $\Omega \times \Omega$ and for all $t \in [0,T]$.
		\end{thm_enum} 
		Then there exists $\theta \in (\varphi, \frac{\pi}{2})$ such that for $\omega \ge \omega_1$
			\[
				\R{\lambda R(\lambda, A(t) + \omega): \lambda \not\in \overline{\Sigma_{\theta}}, t \in [0,T]} < \infty.
			\]
	\end{proposition}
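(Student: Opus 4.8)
The plan is to deduce the claim from Proposition~\ref{prop:r_boundedness} applied to the shifted family $(A(t)+\omega)_{t\in[0,T]}$; we may assume in addition $\omega\ge 0$, after replacing $\omega_1$ by $\max\{\omega_1,0\}$ (which only weakens assumption~\ref{prop:r_boundedness_kernel:kernel}). First I would check that $(A(t)+\omega)_{t\in[0,T]}$ inherits uniform sectoriality from assumption~\ref{prop:r_boundedness_kernel:uniform_boundedness}: since $R(\lambda,A(t)+\omega)=R(\lambda-\omega,A(t))$ and $[0,\infty)\subset\overline{\Sigma_{\varphi'}}$ for every $\varphi'\in(\varphi,\frac{\pi}{2})$, the geometry of the sector gives, for $\lambda\notin\overline{\Sigma_{\varphi'}}$, both $\lambda-\omega\notin\overline{\Sigma_{\varphi}}$ and $\abs{\lambda-\omega}\ge\abs{\lambda}\sin\varphi'$, whence $\sup_{t}\sup_{\lambda\notin\overline{\Sigma_{\varphi'}}}\norm{\lambda R(\lambda,A(t)+\omega)}<\infty$. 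So assumption~\ref{prop:r_boundedness:uniform_boundedness} of Proposition~\ref{prop:r_boundedness} holds for the shifted family, and the whole problem reduces to verifying the domination assumption~\ref{prop:r_boundedness:domination}, i.e.\ to exhibiting an $\mathcal{R}_q$-bounded family that dominates $(e^{-s(A(t)+\omega)})_{s\ge0}$ for a suitable $q$.

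For this I would use the Gaussian bound~\ref{prop:r_boundedness_kernel:kernel}. Since the semigroup generated by $-(A(t)+\omega)$ is $e^{-s(A(t)+\omega)}=e^{-\omega s}e^{-sA(t)}$, it has kernel $e^{-\omega s}k_t(x,y,s)$, and $\omega\ge\omega_1$ kills the exponential factor, leaving $\abs{e^{-\omega s}k_t(x,y,s)}\le C(4\pi\beta)^{N/2}\,g_{\beta s}(x-y)$ almost everywhere, where $g_r(z)=(4\pi r)^{-N/2}\exp(-\abs{z}^2/(4r))$ is the normalized Gauss kernel on $\IR^N$ (total mass $1$). Consequently, for $s>0$, $\abs{e^{-s(A(t)+\omega)}f}(x)\le C'\,(G_{\beta s}\abs{f})(x)$ a.e., where $C'=C(4\pi\beta)^{N/2}$ and $G_r$ is the positive linear operator $(G_r h)(x)=\int_{\Omega}g_r(x-y)h(y)\d y$; the case $s=0$ is handled by $\Id$. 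Hence the family
\[
	\mathcal{T}\coloneqq\{\Id\}\cup\{\,C'G_r:r>0\,\}
\]
dominates $(e^{-s(A(t)+\omega)})_{s\ge0,\,t\in[0,T]}$ in the sense of assumption~\ref{prop:r_boundedness:domination}, provided it is $\mathcal{R}_q$-bounded for the relevant $q$.

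The only real work — and what I expect to be the crux of the argument — is showing that $\mathcal{T}$ is $\mathcal{R}_q$-bounded for \emph{every} $q\in(1,\infty)$. Extending functions on $\Omega$ by zero to $\IR^N$, one has $\abs{G_r h}(x)\le(M\bar h)(x)$ pointwise with a constant independent of $r$, because $g_r$ is a dilate of the fixed nonnegative radially decreasing density $g_1$ and $M$ is the Hardy--Littlewood maximal operator (the standard domination of approximate-identity convolutions by $M$). The Fefferman--Stein vector-valued maximal inequality on $L^p(\IR^N)$, valid for all $p,q\in(1,\infty)$, then gives $\norm{(\sum_k\abs{C'G_{r_k}h_k}^q)^{1/q}}_{L^p(\Omega)}\lesssim\norm{(\sum_k\abs{h_k}^q)^{1/q}}_{L^p(\Omega)}$ with a constant independent of $n$, of $r_1,\dots,r_n>0$ and of $h_1,\dots,h_n\in L^p(\Omega)$, so $\mathcal{T}$ is $\mathcal{R}_q$-bounded. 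If $p\ne2$, I would then choose $q\in(1,\infty)$ on the opposite side of $2$ from $p$, so that $2$ lies strictly between $p$ and $q$, and apply Proposition~\ref{prop:r_boundedness} to $(A(t)+\omega)_{t\in[0,T]}$ to obtain $\theta\in(\varphi,\frac{\pi}{2})$ with $\mathcal{R}_2\{\lambda R(\lambda,A(t)+\omega):\lambda\notin\overline{\Sigma_{\theta}},\,t\in[0,T]\}<\infty$, which is precisely the assertion.

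Finally, the case $p=2$ is immediate and does not even require the Gaussian estimates: on the Hilbert space $L^2(\Omega)$, $\mathcal{R}$-boundedness coincides with boundedness in operator norm, and the latter holds for $\{\lambda R(\lambda,A(t)+\omega):\lambda\notin\overline{\Sigma_{\varphi'}},\,t\in[0,T]\}$ by the uniform sectoriality of the shifted family established in the first step.
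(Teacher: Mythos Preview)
Your proof is correct and follows essentially the same route as the paper: dominate the shifted semigroups via the Gaussian kernel bound by scalar multiples of the heat semigroup, then use the pointwise domination of Gaussian convolutions by the Hardy--Littlewood maximal operator together with the Fefferman--Stein vector-valued maximal inequality to obtain $\mathcal{R}_q$-boundedness of the dominating family for every $q\in(1,\infty)$, and conclude via Proposition~\ref{prop:r_boundedness}. You are in fact slightly more careful than the paper in two respects: you explicitly verify assumption~\ref{prop:r_boundedness:uniform_boundedness} for the shifted family, and you isolate the case $p=2$ (which does not literally fall under the ``$2$ strictly between $p$ and $q$'' clause of Proposition~\ref{prop:r_boundedness}).
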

	\begin{proof}
		By assumption~\ref{prop:r_boundedness_kernel:kernel} one has for all $t \in [0,T]$, $s > 0$ and $f \in L^p(\Omega) \cap L^2(\Omega)$, which after extension by zero we also regard as functions on $\IR^N$, the estimate
			\begin{align*}
				\normalabs{e^{-sA(t)}f}(x) & \le \int_{\Omega} \abs{k_{t}(x,y,s)} \abs{f(y)} \d y \\
				& \le C s^{-N/2} e^{\omega_1 s} \int_{\IR^N} \exp \biggl( -\frac{\abs{x-y}^2}{4\beta s} \biggr) \abs{f(y)} \d y \\
				& = C (4\pi)^{N/2} \beta^{N/2} e^{\omega_1 s} G_p(\beta s) \abs{f}(x), 
			\end{align*}
		where $(G_p(s))_{s \ge 0}$ is the heat semigroup on $L^p(\IR^N)$ given by
			\[
				G_p(s)f = k_s * f \qquad \text{with} \qquad k_s(x) = \frac{1}{(4\pi s)^{N/2}} e^{-\abs{x}^2/4s}.
			\]
		By density, the domination inequality is true for all $f \in L^p(\Omega)$. Writing the above estimate in a more compact form, we have for $\omega \ge \omega_1$ and a universal constant $M \ge 0$ the domination estimate
			\[
				\normalabs{e^{-\omega s} e^{-s A(t)} f}(x) \le M G_p(\beta s)\abs{f}(x) \eqqcolon (T(s)\abs{f})(x)
			\]
		for all $f \in L^p(\Omega)$, $s > 0$ and $t \in [0,T]$. We now show that the family $\mathcal{T} = \{ T(s): s \ge 0 \}$ is $\mathcal{R}_q$-bounded for all $q \in (1, \infty)$. First, as a consequence of the domination result for the maximal operator associated to convolutions~\cite[Theorem~2.1.10]{Gra08} one gets for all $s > 0$
			\[
				(T(s)\abs{f})(x) \le M \sup_{s > 0} (k_s * \abs{f})(x) \le \mathcal{M}(f)(x),
			\]
		where $\mathcal{M}$ is the centered Hardy--Littlewood maximal operator. Now, the $\mathcal{R}_q$-boundedness of $\mathcal{T}$ follows from domination and the fact that the Hardy--Littlewood maximal operator satisfies for all $q \in (1, \infty)$, $n \in \IN$ and $f_1, \ldots, f_n \in L^p(\IR^N)$ the vector-valued estimate~\cite[Theorem~4.6.6]{Gra08}
			\[
				\biggnorm{\biggl(\sum_{k=1}^n \abs{\mathcal{M}(f_k)}^q \biggr)^{1/q}}_p \lesssim \biggnorm{\biggl(\sum_{k=1}^n \abs{f_k}^q \biggr)^{1/q}}_p.
			\]
		 Hence, the result follows from Proposition~\ref{prop:r_boundedness} applied to the family of sectorial operators $(A(t) + \omega)_{t \in [0,T]}$.
	\end{proof}
	
	As a consequence of this result and the heat kernel estimates shown by Daners in~\cite{Dan00} we obtain the following $\mathcal{R}$-boundedness results dealing with Dirichlet, Neumann and Robin boundary conditions respectively.
	
	\begin{theorem}[Dirichlet boundary conditions]\label{thm:r_boundedness_dirichlet}
		Let $\Omega$ be a domain in $\IR^N$. Suppose that one has real-valued coefficients $a_{ij}, a_i, b_i, c_0 \in L^{\infty}([0,T] \times \Omega)$ with
			\[
				\sum_{i,j = 1}^N a_{ij}(t,x) \xi_i \xi_j \ge \alpha_0 \abs{\xi}^2
			\]
		for some $\alpha_0 > 0$. For $p \in (1, \infty)$ and $t \in [0,T]$ let $A(t)$ be the $L^p$-realization of the operator associated to the form~\eqref{eq:form_dn} with form domain $W_0^{1,2}(\Omega)$. Then there exist $\omega_0 \ge 0$ and $\theta \in (0, \frac{\pi}{2})$ such that for all $\omega \ge \omega_0$ one has $\sigma(A(t) + \omega) \subset \overline{\Sigma_{\theta}}$ for all $t \in [0,T]$ and
			\[
				\R{\lambda R(\lambda, A(t) + \omega): \lambda \not\in \overline{\Sigma_{\theta}}, t \in [0,T]} < \infty.
			\]
		Further, in the case $\norm{a_i}_{\infty} = \norm{b_i}_{\infty} = \norm{\min (c_0, 0)}_{\infty} = 0$ we can choose $\omega_0 = 0$.
	\end{theorem}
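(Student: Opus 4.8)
The plan is to derive the statement from Proposition~\ref{prop:r_boundedness_kernel}, so the task reduces to verifying its two hypotheses uniformly in $t \in [0,T]$ for a suitably shifted family $(A(t) + \omega_0)_{t \in [0,T]}$, and then to reading off $\omega_0$ and $\theta$ from the conclusion.

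The heart of the matter is the uniform Gaussian estimate, hypothesis~\ref{prop:r_boundedness_kernel:kernel}. For each fixed $t$ the operator $A(t)$ is the $L^p$-realisation of a divergence form operator with bounded measurable real coefficients subject to Dirichlet boundary conditions on the arbitrary domain $\Omega$, and by the heat kernel estimates of Daners in~\cite{Dan00} the semigroup $(e^{-sA(t)})_{s \ge 0}$ is given by a kernel $k_t(\cdot,\cdot,s)$ satisfying a Gaussian upper bound of exactly the shape required in~\ref{prop:r_boundedness_kernel:kernel}. The point to check is that the constants $C$, $\beta$ and $\omega_1$ there depend only on $N$, on the ellipticity constant $\alpha_0$ and on the $L^\infty$-norms of the coefficients, and in particular neither on $\Omega$ nor on any regularity of the coefficients. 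Since $a_{ij}, a_i, b_i, c_0 \in L^\infty([0,T]\times\Omega)$, these norms are bounded uniformly in $t \in [0,T]$, so one obtains a Gaussian bound with constants independent of $t$. In the special case $\norm{a_i}_\infty = \norm{b_i}_\infty = \norm{\min(c_0,0)}_\infty = 0$, i.e.\ $a_i = b_i = 0$ and $c_0 \ge 0$, the operator obtained by dropping $c_0$ is a pure second order divergence form operator whose semigroup satisfies a Gaussian bound with $\omega_1 = 0$; since $c_0 \ge 0$ the semigroup generated by $-A(t)$ is dominated in modulus by that of the $c_0 = 0$ operator by positivity, so $\omega_1 = 0$ is admissible.

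For hypothesis~\ref{prop:r_boundedness_kernel:uniform_boundedness} I would use the form method of Section~\ref{sec:form_method}. After shifting by a constant depending only on $N$, $\alpha_0$ and the coefficient norms, the forms $a_0(t,\cdot,\cdot) + \omega_0(\cdot|\cdot)_{L^2}$ are bounded and coercive on $V = W_0^{1,2}(\Omega)$ with constants uniform in $t \in [0,T]$, hence the associated operators on $L^2$ generate analytic semigroups of a common analyticity angle. Combined with the uniform Gaussian bounds just obtained, the standard extrapolation of analyticity to the $L^p$-scale (see~\cite{Ouh05}) yields that, after the same shift, the operators $A(t) + \omega_0$ are sectorial on $L^p$ with a common angle $\varphi \in (0,\frac{\pi}{2})$ and $\sup_{t,\lambda} \norm{\lambda R(\lambda, A(t)+\omega_0)} < \infty$, which is~\ref{prop:r_boundedness_kernel:uniform_boundedness}. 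In the special case $a_i = b_i = 0$, $c_0 \ge 0$ the form $a_0(t,\cdot,\cdot)$ is already accretive with numerical range in a fixed sector of half-angle $< \frac{\pi}{2}$ determined by $\alpha_0$ and $\norm{a_{ij}}_\infty$, so the $A(t)$ themselves are sectorial of angle $< \frac{\pi}{2}$ on $L^p$ and no shift is needed.

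With both hypotheses in hand, choose $\omega_0 \ge 0$ at least as large as the shift from the previous paragraph and as the Gaussian growth rate $\omega_1$; then $-(A(t)+\omega_0)$ generate semigroups with a uniform Gaussian bound without exponential factor, so Proposition~\ref{prop:r_boundedness_kernel} applies to $(A(t)+\omega_0)_{t\in[0,T]}$ and produces $\theta \in (\varphi,\frac{\pi}{2})$ with $\mathcal{R}\{\lambda R(\lambda, A(t)+\omega): \lambda \notin \overline{\Sigma_\theta}, t \in [0,T]\} < \infty$ for all $\omega \ge \omega_0$; together with $\sigma(A(t)+\omega) \subset \overline{\Sigma_\varphi} \subset \overline{\Sigma_\theta}$ for such $\omega$ this is the assertion, and $\omega_0 = 0$ is permissible in the stated special case. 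The main obstacle I anticipate is the first step: one must look into Daners' argument and confirm that his Gaussian constants genuinely depend only on the structural data $N$, $\alpha_0$ and the sup-norms of the coefficients, since this is exactly what upgrades a single estimate to a bound uniform across the family $(A(t))_{t\in[0,T]}$; everything else is an assembly of results already available in the excerpt.
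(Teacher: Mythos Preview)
Your proposal is correct and follows essentially the same route as the paper's proof: both reduce to Proposition~\ref{prop:r_boundedness_kernel} by invoking Daners' Gaussian bounds~\cite{Dan00} with constants depending only on the structural data, and both obtain the uniform $L^2$-sectoriality from the form method. The only cosmetic difference is in how the $L^p$-sectoriality with angle $<\frac{\pi}{2}$ is argued: the paper spells out that the Gaussian bound gives a uniformly bounded $C_0$-semigroup on $L^p$ (hence sectoriality for some angle $>\frac{\pi}{2}$) and then explicitly invokes Stein interpolation between this and the $L^2$-estimate, whereas you appeal to ``standard extrapolation of analyticity'' via~\cite{Ouh05}. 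Both amount to the same interpolation mechanism; the paper's version is slightly more self-contained and makes the uniformity in $t$ transparent, while your phrasing would benefit from naming the precise result in~\cite{Ouh05} you have in mind so that the uniform dependence on $t$ is visibly preserved.
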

	\begin{proof}
		It follows from~\cite[Theorem~6.1, discussion after Theorem~5.1]{Dan00} that the semigroups $(e^{-sA(t)})_{s \ge 0}$ are given by kernels $k_t(x,y,s)$ satisfying
			\[
				\abs{k_t(x,y,s)} \le C_N (2\alpha_0)^{-N/2} C^{N/2} s^{-N/2} e^{2\omega_1s} \exp\biggl( - \frac{\abs{x-y}^2}{8 \omega_2 s} \biggr).
			\]
		Whereas $C_N$ only depends on $N$ and $C$ is the constant of some Sobolev inequality, it is shown that $\omega_1$ depends only on upper bounds for $\alpha_0^{-1}$, $\norm{a_i(t, \cdot)}_{\infty}$, $\norm{b_i(t, \cdot)}_{\infty}$ for $i = 1, \ldots, N$ and $\norm{\min (c_0(t, \cdot), 0)}_{\infty}$. Further, $\omega_2$ depends on upper bounds for $\norm{a_{ij}(t, \cdot)}_{\infty}$ and $\alpha_0^{-1}$. Hence, the assumed uniform ellipticity and the uniform boundedness of the coefficients in the time variable imply that assumption~\ref{prop:r_boundedness_kernel:kernel} of Theorem~\ref{prop:r_boundedness_kernel} is satisfied for all $\omega \ge 2 \omega_1$. Moreover, $\omega_1 = 0$ in the case of the addendum. It remains to remark that also assumption~\ref{prop:r_boundedness_kernel:uniform_boundedness} holds: for this recall from Section~\ref{sec:form_method} that the analyticity properties on $L^2$ only depend on the norm and on the coercivity constant of the associated forms $a(t,\cdot, \cdot) + \tilde{\omega} (\cdot, \cdot)_H$ if $\tilde{\omega}$ can be chosen independently of $t \in [0,T]$. Moreover, these constants and the choice of $\tilde{\omega}$ only depend on upper bounds for $\alpha_0^{-1}$ and the essential supremas of the coefficient functions. Hence, assumption~\ref{prop:r_boundedness_kernel:uniform_boundedness} is satisfied on $L^2$. Further, we know from the kernel bounds that the operators $-A(t)$ generate $C_0$-semigroups with uniform growth bounds. This shows assumption~\ref{prop:r_boundedness_kernel:uniform_boundedness} for $p \in (1, \infty)$, but $\theta > \frac{\pi}{2}$. In order to deduce uniform sectorial estimates for some $\theta < \frac{\pi}{2}$, i.e.\ assumption~\ref{prop:r_boundedness_kernel:uniform_boundedness}, one now invokes the classical Stein interpolation theorem (\cite[Theorem~1.3.7]{Gra08}, for an alternative approach to this result see~\cite[Theorem~3.1]{Fac13b}) by interpolating between the $L^2$ and the $L^p$ result in the spirit of the proof of Proposition~\ref{prop:r_boundedness}.
	\end{proof}
	
	For the two remaining boundary conditions we can invoke~\cite[Theorem~6.1]{Dan00} in the same manner. We therefore only state the results.
	
	\begin{theorem}[Neumann boundary conditions]\label{thm:r_boundedness_neumann}
		Let $\Omega$ be a domain in $\IR^N$ satisfying the interior cone condition. Suppose that one has real-valued coefficients $a_{ij}, a_i, b_i, c_0 \in L^{\infty}([0,T] \times \Omega)$ with
			\[
				\sum_{i,j = 1}^N a_{ij}(t,x) \xi_i \xi_j \ge \alpha_0 \abs{\xi}^2
			\]
		for some $\alpha_0 > 0$. For $p \in (1, \infty)$ and $t \in [0,T]$ let $A(t)$ be the $L^p$-realization of the operator associated to the form~\eqref{eq:form_dn} with form domain $W^{1,2}(\Omega)$. Then there exist $\omega_0 \ge 0$ and $\theta \in (0, \frac{\pi}{2})$ such that for all $\omega \ge \omega_0$ one has $\sigma(A(t) + \omega) \subset \overline{\Sigma_{\theta}}$ for all $t \in [0,T]$ and
			\[
				\R{\lambda R(\lambda, A(t) + \omega): \lambda \not\in \overline{\Sigma_{\theta}}, t \in [0,T]} < \infty.
			\]
	\end{theorem}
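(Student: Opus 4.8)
The plan is to argue exactly as in the proof of Theorem~\ref{thm:r_boundedness_dirichlet}, the only structural difference being that for Neumann conditions the Gaussian upper bounds require the domain to be regular enough for the relevant Sobolev inequality to hold, which is guaranteed by the interior cone condition. First I would invoke~\cite[Theorem~6.1]{Dan00}: under this hypothesis the semigroups $(e^{-sA(t)})_{s \ge 0}$ associated with the form~\eqref{eq:form_dn} on the form domain $W^{1,2}(\Omega)$ are given by kernels $k_t(x,y,s)$ satisfying a Gaussian estimate of the form
\[
	\abs{k_t(x,y,s)} \le C s^{-N/2} e^{\omega_1 s} \exp\Bigl( -\frac{\abs{x-y}^2}{4\beta s} \Bigr),
\]
where $C$, $\beta$ and $\omega_1$ can be taken to depend only on $N$, the constant of the Sobolev inequality (hence on the cone parameters of $\Omega$), $\alpha_0^{-1}$ and the $L^{\infty}([0,T] \times \Omega)$-norms of the coefficients $a_{ij}, a_i, b_i, c_0$. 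By uniform ellipticity and uniform boundedness of the coefficients in the time variable, this is precisely assumption~\ref{prop:r_boundedness_kernel:kernel} of Proposition~\ref{prop:r_boundedness_kernel}, valid after shifting by any $\omega \ge \omega_1$.

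It then remains to check the uniform sectoriality assumption~\ref{prop:r_boundedness_kernel:uniform_boundedness}. On $L^2(\Omega)$ this follows from the form method recalled in Section~\ref{sec:form_method}: for $\tilde{\omega}$ chosen large enough, uniformly in $t$, the shifted forms $a(t,\cdot,\cdot) + \tilde{\omega}(\cdot \mid \cdot)_{L^2}$ are uniformly bounded and coercive on $W^{1,2}(\Omega)$, so the associated operators are uniformly sectorial of some angle strictly below $\frac{\pi}{2}$ on $L^2$. On $L^p$ the Gaussian bound only provides uniformly bounded analytic semigroups, i.e.\ sectoriality with an angle possibly exceeding $\frac{\pi}{2}$; to lower the angle below $\frac{\pi}{2}$ I would interpolate the $L^2$ estimate with the $L^p$ estimate by the classical Stein interpolation theorem~\cite[Theorem~1.3.7]{Gra08}, exactly in the spirit of the proof of Theorem~\ref{thm:r_boundedness_dirichlet} and of Proposition~\ref{prop:r_boundedness}. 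With both hypotheses of Proposition~\ref{prop:r_boundedness_kernel} verified, the claimed $\mathcal{R}$-boundedness follows for every $\omega \ge \omega_0$ with $\omega_0 \ge \omega_1$ suitably chosen.

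I expect the only genuinely delicate point --- and the reason one cannot hope for $\omega_0 = 0$ here, in contrast to the Dirichlet case --- to be the coercivity issue: the Neumann form annihilates constants, so neither the $L^2$ form estimates nor Daners' kernel bounds hold without first adding a sufficiently large shift. One therefore has to make sure that a single $\omega$ simultaneously makes the shifted form coercive on $L^2$, keeps the Gaussian bound valid, and survives the Stein interpolation step; this is immediate once all the relevant constants have been tracked as above, but it is the place where the bookkeeping has to be done carefully. Apart from this, the argument is verbatim the one for Dirichlet conditions.
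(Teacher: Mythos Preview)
Your proposal is correct and follows exactly the paper's approach: the paper explicitly states that for Neumann (and Robin) boundary conditions one invokes \cite[Theorem~6.1]{Dan00} ``in the same manner'' as in the Dirichlet proof, and you have reproduced precisely that argument. One small terminological slip: the Gaussian bound on $L^p$ yields uniformly bounded $C_0$-semigroups (hence resolvent estimates only for angles $\ge \frac{\pi}{2}$), not \emph{analytic} semigroups---analyticity on $L^p$ with angle below $\frac{\pi}{2}$ is exactly what the Stein interpolation step then produces.
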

	
	\begin{theorem}[Robin boundary conditions]\label{thm:r_boundedness_robin}
		Let $\Omega$ be a bounded Lipschitz domain in $\IR^N$. Suppose that one has real-valued coefficients $a_{ij}, a_i, b_i, c_0 \in L^{\infty}([0,T] \times \Omega)$ and $\beta \in L^{\infty}([0,T] \times \partial \Omega)$ with $\beta \ge 0$ and
			\[
				\sum_{i,j = 1}^N a_{ij}(t,x) \xi_i \xi_j \ge \alpha_0 \abs{\xi}^2
			\]
		for some $\alpha_0 > 0$. For $p \in (1, \infty)$ and $t \in [0,T]$ let $A(t)$ be the $L^p$-realization of the operator associated to the form~\eqref{eq:form_robin} with form domain $W^{1,2}(\Omega)$. Then there exist $\omega_0 \ge 0$ and $\theta \in (0, \frac{\pi}{2})$ such that for all $\omega \ge \omega_0$ one has $\sigma(A(t) + \omega) \subset \overline{\Sigma_{\theta}}$ for all $t \in [0,T]$ and
			\[
				\R{\lambda R(\lambda, A(t) + \omega): \lambda \not\in \overline{\Sigma_{\theta}}, t \in [0,T]} < \infty.
			\]
	\end{theorem}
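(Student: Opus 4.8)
The plan is to argue exactly as in the proof of Theorem~\ref{thm:r_boundedness_dirichlet}, with the Dirichlet form replaced by the Robin form~\eqref{eq:form_robin}; the only genuinely new point is the uniform treatment of the boundary term. Concretely I would proceed in three steps: first establish uniform $L^2$-sectoriality of the associated operators via form methods, then quote Daners' uniform Gaussian bounds, and finally feed both into Proposition~\ref{prop:r_boundedness_kernel} together with a Stein interpolation argument.

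For the $L^2$-estimate I would fix $t \in [0,T]$ and decompose $a_1(t,\cdot,\cdot) = a_0(t,\cdot,\cdot) + \int_{\partial\Omega}\beta(t,\cdot)\,uv\,\d\mathcal{H}_{N-1}$ on $V = W^{1,2}(\Omega)$. Since $\Omega$ is a bounded Lipschitz domain, the trace operator $W^{1,2}(\Omega)\to L^2(\partial\Omega)$ is bounded with a constant $C_{\mathrm{tr}}$ depending only on $\Omega$, so the boundary term is continuous on $V\times V$ with bound $\norm{\beta}_\infty C_{\mathrm{tr}}^2$; combined with the uniform continuity of $a_0(t,\cdot,\cdot)$ this gives $\abs{a_1(t,u,v)} \le M\norm{u}_V\norm{v}_V$ uniformly in $t$. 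Moreover, as $\beta \ge 0$ the boundary term is nonnegative and therefore does not spoil coercivity: exactly as in Section~\ref{sec:form_elliptic_operator} there is $\tilde\omega \ge 0$, depending only on $\alpha_0^{-1}$ and the $L^\infty$-norms of $a_i, b_i, c_0$, with
\[
	a_1(t,u,u) + \tilde\omega\norm{u}_H^2 \ge a_0(t,u,u) + \tilde\omega\norm{u}_H^2 \ge \alpha_1\norm{u}_V^2 \qquad \text{for all } t \in [0,T].
\]
The general form theory of Section~\ref{sec:form_method} then produces uniform sectorial estimates for the shifted $L^2$-realizations on a common sector $\overline{\Sigma_{\theta_0}}$ with $\theta_0 < \frac{\pi}{2}$, which is assumption~\ref{prop:r_boundedness_kernel:uniform_boundedness} of Proposition~\ref{prop:r_boundedness_kernel} on $L^2$.

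Next, applying~\cite[Theorem~6.1]{Dan00} to the Robin form on the bounded Lipschitz domain $\Omega$ with $\beta \ge 0$ yields kernels $k_t(x,y,s)$ of $(e^{-sA(t)})_{s\ge 0}$ satisfying
\[
	\abs{k_t(x,y,s)} \le C\,s^{-N/2}e^{\omega_1 s}\exp\Bigl(-\frac{\abs{x-y}^2}{4\omega_2 s}\Bigr) \qquad \text{a.e.\ on } \Omega\times\Omega,
\]
where $C$, $\omega_1$, $\omega_2$ depend only on $N$, $\Omega$, $\alpha_0^{-1}$ and the $L^\infty$-norms of the coefficients and of $\beta$; by the uniform-in-time hypotheses they can be chosen independent of $t \in [0,T]$, which is assumption~\ref{prop:r_boundedness_kernel:kernel}. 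These bounds also make each $-A(t)$ the generator of a $C_0$-semigroup on $L^p(\Omega)$ with uniform growth bound, so assumption~\ref{prop:r_boundedness_kernel:uniform_boundedness} holds on $L^p$ for every $p\in(1,\infty)$, a priori only with an angle exceeding $\frac{\pi}{2}$; interpolating this with the $L^2$-estimate from the previous step via the classical Stein interpolation theorem, just as at the end of the proof of Theorem~\ref{thm:r_boundedness_dirichlet}, upgrades the angle to some $\theta < \frac{\pi}{2}$ uniform in $t$. Proposition~\ref{prop:r_boundedness_kernel} applied to $(A(t))_{t\in[0,T]}$ then delivers $\omega_0 \ge 0$ and $\theta \in (0,\frac{\pi}{2})$ with $\sigma(A(t)+\omega)\subset\overline{\Sigma_\theta}$ and $\R{\lambda R(\lambda,A(t)+\omega): \lambda\not\in\overline{\Sigma_\theta},\, t\in[0,T]} < \infty$ for all $\omega \ge \omega_0$.

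The hard part is keeping everything uniform in $t$: one must check that the coercivity shift $\tilde\omega$ and, more importantly, Daners' constants $C$, $\omega_1$, $\omega_2$ really depend only on the time-independent data ($N$, $\Omega$, $\alpha_0$, and the $L^\infty$-bounds of the coefficients and of $\beta$). The sign condition $\beta \ge 0$ and the boundedness of the trace operator on a bounded Lipschitz domain are exactly what turn the boundary term into such a harmless perturbation; without boundedness of $\Omega$, where the trace operator need not even be bounded, this line of argument would not go through.
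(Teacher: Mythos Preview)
Your proposal is correct and follows exactly the route the paper intends: the paper does not give a separate proof but states that for Robin (and Neumann) one invokes \cite[Theorem~6.1]{Dan00} ``in the same manner'' as in the proof of Theorem~\ref{thm:r_boundedness_dirichlet}, and your three steps (uniform $L^2$-sectoriality from the form, uniform Gaussian bounds from Daners, then Stein interpolation plus Proposition~\ref{prop:r_boundedness_kernel}) reproduce that argument verbatim. Your added details on handling the boundary term via the trace operator and the sign condition $\beta \ge 0$ are precisely the modifications needed to make the Dirichlet proof go through in the Robin setting.
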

	
	\begin{remark}
		The above Gaussian bounds can be generalized to complex measurable coefficients provided the leading coefficients $(a_{ij})$ satisfy some additional regularity assumptions~\cite[Section~5]{Ouh04}. Consequently, for such coefficients an analogue of the above $\mathcal{R}$-boundedness results holds. Furthermore, if the domain is assumed to be $C^{1}$ and the leading coefficients lie in $VMO(\Omega)$, then without discussing the dependencies of the constant Gaussian estimates were obtained in~\cite[Theorem~7]{AusTch01b} for complex coefficients.
	\end{remark}
	
	\subsection{Uniformly bounded imaginary powers}
	
	We now prove uniform bounds on the imaginary powers of second order operators. Note that it is known~\cite[Theorem~3.1]{DuoRob96} that Gaussian estimates imply the boundedness of the $H^{\infty}$-calculus (we refer to~\cite{Haa06}) and therefore in particular the boundedness of the imaginary powers. However, verifying uniform constants would involve yet another constant chasing. Therefore, we take another route via dilation theory and transference methods.
	
	\begin{theorem}\label{thm:bip_second_order}
		Consider operators as in~Theorem~\ref{thm:r_boundedness_dirichlet}, \ref{thm:r_boundedness_neumann}, or~\ref{thm:r_boundedness_robin}. Then for all $p \in (1, \infty)$ there exists $\omega_0 \ge 0$ such the associated operators $L_p(t) + \omega$ have uniformly bounded imaginary powers for all $\omega \ge \omega_0$. If $c_0$ and in the Robin case $\beta$ are non-negative and all other coefficients except for $(a_{ij})$ vanish, then one can choose $\omega_0 = 0$.
	\end{theorem}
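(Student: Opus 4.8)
The plan is to reduce the statement, in each of the three cases, to two structural facts that are already at hand -- uniform sectoriality of an angle strictly below $\frac{\pi}{2}$ and, after a suitable shift, positivity together with $L^p$-contractivity of the associated semigroups -- and then to extract the uniform bound on the imaginary powers by dilation theory and transference, so that no constant chasing in the Gaussian kernel estimates is required. Write $A(t)$ for the $L^p$-realisations in the statement (denoted $L_p(t)$ there) and, for $\omega$ to be fixed, $B(t) \coloneqq A(t) + \omega$.

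First, recall from Section~\ref{sec:form_method} that the analyticity angle of the semigroup generated by $-A_2(t)$ on $L^2$ depends only on upper bounds for the boundedness and coercivity constants of the associated (shifted) form, hence is uniform in $t \in [0,T]$; combining this with the uniform Gaussian bounds established in the proofs of Theorems~\ref{thm:r_boundedness_dirichlet}, \ref{thm:r_boundedness_neumann} and~\ref{thm:r_boundedness_robin}, via the same Stein interpolation argument used there, produces $\omega_0 \ge 0$ and $\psi_0 \in (0, \frac{\pi}{2})$ such that $\sigma(B(t)) \subset \overline{\Sigma_{\psi_0}}$ with uniform resolvent estimates for all $\omega \ge \omega_0$ and $t \in [0,T]$. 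Enlarging $\omega_0$ if necessary -- still only in terms of the ellipticity constant and the coefficient norms -- one moreover gets that $(e^{-sB(t)})_{s \ge 0}$ is a positive contraction $C_0$-semigroup on $L^p$ for every fixed $p \in (1,\infty)$ and every $\omega \ge \omega_0$: on $L^2$ this is m-accretivity of $B(t)$, which follows from coercivity of the shifted form, and on the remaining $L^p$ it follows from the Beurling--Deny--Ouhabaz criteria for $L^\infty$-contractivity of form semigroups~\cite{Ouh05} together with the positivity and Gaussian bounds of~\cite{Dan00} (indeed a single $\omega_0$ then works for all $p$ at once). In the purely second order case with non-negative zero order coefficient, and non-negative $\beta$ in the Robin case, the form is already accretive and the semigroup is sub-Markovian, so one may take $\omega = \omega_0 = 0$; this yields the addendum.

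Now, for a positive contraction $C_0$-semigroup on $L^p$, $1 < p < \infty$, whose generator is sectorial, Fendler's dilation theorem yields a larger $L^p$-space, a positive isometric embedding $J$ and a positive contractive projection $P$ with $PJ = \Id$, together with a $C_0$-group $(U_t(s))_{s \in \IR}$ of positive invertible isometries dilating $(e^{-sB(t)})_{s \ge 0}$. Since $(U_t(s))_{s \in \IR}$ is a bounded group of isometries, the Coifman--Weiss transference principle bounds the relevant Fourier multipliers of its generator on $L^p$ by their $L^p(\IR)$-multiplier norms; transporting these bounds back through $P$ and $J$ shows that $B(t)$ has bounded imaginary powers, with $\norm{B(t)^{i\tau}}_{\mathcal{B}(L^p)}$ controlled by a quantity depending only on $p$, the uniform angle $\psi_0$ and the contractivity constant $1$ -- for instance of order $(1+\tau^2) e^{\psi_0 \abs{\tau}}$. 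Taking the supremum over $t \in [0,T]$ gives the asserted uniform bound, which is exactly what Proposition~\ref{prop:bip_fractional} requires.

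The main work lies in the contractivity step and in the bookkeeping of uniformity: one must check that a shift $\omega_0$ depending only on $N$, $\alpha_0^{-1}$ and the coefficient bounds restores positivity and $L^p$-contractivity, which is where the Beurling--Deny--Ouhabaz conditions must be verified quantitatively, and one must make sure that the dilation--transference estimate is genuinely uniform in $t$. The latter is automatic once the former is in place, since the only data entering that estimate are the contractivity constant and the sectoriality angle, both uniform in $t$. An alternative would be to deduce bounded imaginary powers directly from the Gaussian bounds through the holomorphic functional calculus results of Duong and Robinson and then track the Calderón--Zygmund constants of $B(t)^{i\tau}$, but that is precisely the kernel constant chasing that the present route is designed to circumvent.
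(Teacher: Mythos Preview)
Your proposal is correct and follows the same route as the paper: after a uniform shift the semigroups are positive contractions on $L^p$, and then Fendler's dilation combined with Coifman--Weiss transference reduces everything to the imaginary powers of the shift generator on $L^p(\IR)$. The only differences are cosmetic---the paper obtains the $L^p$-contractivity step by citing~\cite{Nit12} directly rather than via Beurling--Deny--Ouhabaz criteria, and the transferred bound carries exponential type $\eta>\frac{\pi}{2}$ from the shift group rather than your sectoriality angle $\psi_0$---neither of which affects the argument or its application to Proposition~\ref{prop:bip_fractional}.
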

	\begin{proof}
		Fix $p \in (1, \infty)$. It is shown in~\cite[Theorem~4.3 \& Remark~4.4]{Nit12} that there exists $\omega_p \ge 0$ such that the semigroups generated by $-L_p(t)$ are positive and satisfy
		\[
			\normalnorm{e^{-sL_p(t)}}_{p} \le e^{\omega_p s}.
		\]
		In fact, the statement there contains an explicit upper bound for $\omega_p$ which only depends on upper bounds for $\alpha_0^{-1}$ in~\eqref{eq:ellipticity_condition}, the $L^{\infty}$-norms of the coefficients and of $\beta$ in the case of Robin boundary conditions. In particular, $(e^{-\omega_p s} T_t(s))_{t \ge 0}$ is a semigroup of positive contractions for all $t \in [0,T]$. By Fendler's dilation theorem~\cite{Fen97} and the transference principle of Coifman--Weiss~\cite{CoiWei76} (used after rewriting the estimate into an estimate for the Hille--Phillips calculus with the help of~\cite[Lemma~2.12]{Mer99}) which both do not increase the size constants of the imaginary powers (see also~\cite{HiePru98} where this strategy was generalized), the problem is reduced to the boundedness of the imaginary powers of the shift group on $L^p(\IR)$ alone. Now, it is well-known (see for example~\cite[Theorem~3.1]{DorVen87} for the analogous case of the unilateral shift on $L^p([0,\infty))$) that if $-A$ generates the shift group, then for all $\eta > \frac{\pi}{2}$ there exists $M > 0$ with
			\[
				\normalnorm{A^{is}} \le M e^{\eta \abs{s}} \qquad \text{for all } s \in \IR.
			\]  
		Hence, $L_p(t) + \omega$ has uniformly bounded imaginary powers for all $\omega \ge \omega_p$. 
	\end{proof}

\section{Maximal Regularity for Non-Autonomous Boundary Conditions}
	
	In this section we apply our results to the Laplacian with non-autonomous boundary conditions on sufficiently smooth domains. This can be considered as the easiest model case for non-autonomous boundary conditions. The domains of the involved operators will be time dependent. In fact, due to the easy structure of the example the domains of the corresponding operators are explicitly known. In the next section we will consider time dependent elliptic operators in divergence form with rough coefficients for which the domains cannot be determined. Although the situation looks quite simple, this example gives a good illustration of the assumptions and limitations of Theorem~\ref{thm:abstract_mr_non-zero} and the result seems to be new.
		 
	 We now fix the setting. Let $\Omega \subset \IR^N$ be a bounded domain with $C^2$ boundary and $\beta\colon [0,T] \times \partial \Omega \to \IR_{\ge 0}$ be a non-negative function with the following properties.
	
	\begin{thm_enum}
		\item The functions $\beta(t,\cdot)$ are uniformly Lipschitz in $t \in [0,T]$, i.e.\ there exists $L > 0$ with
			\[
				\abs{\beta(t,x) - \beta(t,y)} \le L \abs{x-y} \qquad \text{for all } x, y \in \partial \Omega, \, t \in [0,T].
			\]
		\item The functions $\beta(\cdot,x)$ are uniformly Hölder continuous for some $\alpha > 0$, i.e.\ there exists $M \ge 0$ with
			\[
				\abs{\beta(t,x) - \beta(s,x)} \le M \abs{t-s}^{\alpha} \qquad \text{for all } t,s \in [0,T], \, x \in \partial \Omega. 
			\]
	\end{thm_enum}
	
	The second assumption is needed for Theorem~\ref{thm:abstract_mr_non-zero}, whereas the first is in some sense out of convenience. Indeed, it allows us to identify the domain of the Robin Laplacian in $L^p(\Omega)$. Let us fix some $p \in (1, \infty)$. Further, for $t \in [0,T]$ let $A(t)$ be the realization of the Robin problem on $L^p(\Omega)$ induced by the form
	\begin{equation*}
		W^{1,2}(\Omega) \times W^{1,2}(\Omega) \ni (u,v) \mapsto \int_{\Omega} \nabla u \cdot \nabla v + \int_{\Omega} u v + \int_{\partial \Omega} \beta(t, \cdot) u v \d\mathcal{H}_{N-1}.
	\end{equation*}
	Due to the assumed regularity on the boundary and on $\beta$ it follows from~\cite[Theorem~2.4.2.7]{Gri85} that
	\begin{align*}
		D(A(t)) & = \{ u \in W^{2,p}(\Omega): \partial_v u + \beta(t, \cdot) u = 0 \} \\
		A(t)u & = -\Delta u + u,
	\end{align*}	
	where $\partial_{\nu}$ denotes the outer normal derivative. Let us first remark that assumption~\ref{thm:abstract_mr_non-zero:r_boundedness} of Theorem~\ref{thm:abstract_mr_non-zero} is fulfilled after some suitable shift as a particular instance of Theorem~\ref{thm:r_boundedness_robin}. Hence, it remains to check the Hölder assumption~\ref{thm:abstract_mr_zero:hoelder} of Theorem~\ref{thm:abstract_mr_non-zero}. We give two approaches that illustrate the interplay of the assumptions in the theorem.
	
	\subsection{First approach}
	
	It is shown in~\cite{Sob92} that if $\beta(t,\cdot)$ is additionally assumed to be smooth, then the domains $D(A(t)^{1/2})$ are independent of $t$ and for all $t,s \in [0,T]$ and $p \in (1, \infty)$ the difference $A^{1/2}(t) - A^{1/2}(s)$ extends to a bounded operator on $L^p(\Omega)$ satisfying
		\begin{equation*}
			\normalnorm{A^{1/2}(t) - A^{1/2}(s)}_{\mathcal{B}(L^p(\Omega))} \le K \norm{\beta(t,\cdot) - \beta(s,\cdot)}_{\infty} \le KM \abs{t-s}^{\alpha},
		\end{equation*}
	where $K$ is a constant only depending on $p$ and $\Omega$. Now, note that for $u \in D(A(s))$ and $v \in D(A(t)')$ one obtains the time regularity estimate
		\begin{align*}
			\MoveEqLeft \abs{\langle (A_{-1}(t) - A_{-1}(s))u, v \rangle} = \normalabs{\langle A(t)^{1/2}u, A(t)'^{1/2}v \rangle - \langle A(s)^{1/2}u, A(s)'^{1/2}v \rangle} \\
			& = \normalabs{\langle (A(t)^{1/2} - A(s)^{1/2})u, {A(t)'}^{1/2}v \rangle + \langle (A(s)^{1/2}u, ({A(t)'}^{1/2}  - {A(s)'}^{1/2}) v \rangle} \\
			& \le \normalnorm{(A(t)^{1/2} - A(s)^{1/2})u}_{L^p(\Omega)} \normalnorm{{A(t)'}^{1/2}v}_{L^q(\Omega)} \\
			& \qquad + \normalnorm{A(s)^{1/2} u}_{L^p(\Omega)} \normalnorm{({A(t)'}^{1/2} - {A(s)'}^{1/2})v}_{L^q(\Omega)},
		\end{align*}
	where $\frac{1}{p} + \frac{1}{q} = 1$. Putting the two inequalities together, we therefore obtain
		\begin{align*}
			\MoveEqLeft \abs{\langle (A_{-1}(t) - A_{-1}(s))u, v' \rangle} \lesssim \abs{t-s}^{\alpha} \norm{u}_{D(A(s)^{1/2})} \norm{v}_{D(A(t)'^{1/2})}.
		\end{align*}
	It follows from~Theorem~\ref{thm:bip_second_order} together with Proposition~\ref{prop:bip_fractional} that we can exchange $D(A(t)^{1/2})$ and $D(A(s)'^{1/2})$ with complex interpolation spaces of order $\frac{1}{2}$ at the cost of a universal constant. Now, Theorem~\ref{thm:abstract_mr_non-zero} applied to $(A(t) + \omega)_{t \in [0,T]}$ for the complex interpolation scale and some sufficiently large $\omega \ge 0$ implies maximal regularity provided $\alpha > 1/2$. However, one can do better and lower the required regularity on $\beta$. 
	
	\subsection{Second approach}
	
	On an intuitive level this improvement stems from the fact that the difference $A_{-1}(t) - A_{-1}(s)$ only involves the boundary term which is of lower order than the main terms. Indeed, for $p \in (1, \infty)$ one has $\Tr\colon W^{r,p}(\Omega) \to L^p(\Omega)$ for $r > \frac{1}{p}$ \cite[Theorem~7.43 \& Remark~7.45~2.]{AdaFou03} under our assumptions made on $\Omega$. Thus for $r > \frac{1}{q}$, where $\frac{1}{p} + \frac{1}{q} = 1$, and $u \in D(A(s))$ and $v \in D(A(t)')$ we obtain
		\begin{align*}
			\MoveEqLeft \abs{\langle A_{-1}(t) - A_{-1}(s))u, v \rangle} = \abs{\int_{\partial \Omega} (\beta(t,\cdot) - (\beta(s,\cdot)) u_{| \partial \Omega} v_{| \partial \Omega} \d\mathcal{H}_{N-1}} \\
			& \le \norm{\beta(t,\cdot) - \beta(s,\cdot)}_{\infty} \normalnorm{u_{|\partial \Omega}}_{L^p(\partial \Omega)} \normalnorm{v_{|\partial \Omega}}_{L^q(\partial \Omega)} \\
			& \lesssim \abs{t-s}^{\alpha} \norm{u}_{W^{2,p}(\Omega)} \norm{v}_{W^{r,q}(\Omega)}.
		\end{align*}
		Here the first identity follows from approximating elements in $D(A(s))$ and $D(A(t)')$ by elements that additionally lie in the respective $L^2$-realizations. We now need an estimate for the domains of the operators.
		
	\begin{lemma}
		Let $\Omega \subset \IR^N$ be a bounded domain with $C^2$ boundary, $p \in (1, \infty)$ and $I$ an index set. Further let $\beta_i\colon \partial \Omega \to \IR_{\ge 0}$ for $i \in I$ be a uniformly bounded family in $W^{1,\infty}(\partial \Omega)$ and denote by $A_i$ the realization of the Robin Laplacian on $L^p(\Omega)$. Then there exists a constant $C \ge 0$ such that for all $i \in I$ and all $u \in D(A_i)$ one has
			\[
				\norm{u}_{W^{2,p}(\Omega)} \le C (\norm{A_i u}_p + \norm{u}_p).
			\]
	\end{lemma}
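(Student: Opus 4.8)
The plan is to reduce the assertion to a uniform a priori estimate for the inhomogeneous Neumann problem and then to absorb the Robin boundary term. By the domain identification recalled above (see~\cite[Theorem~2.4.2.7]{Gri85}), for each $i \in I$ every $u \in D(A_i)$ lies in $W^{2,p}(\Omega)$, satisfies $A_i u = -\Delta u + u$ and fulfils $\partial_\nu u + \beta_i u_{|\partial\Omega} = 0$ on $\partial\Omega$ in the trace sense. Setting $f \coloneqq A_i u$, the function $u$ therefore solves
\[
	-\Delta u + u = f \text{ in } \Omega, \qquad \partial_\nu u = -\beta_i u_{|\partial\Omega} =: g \text{ on } \partial\Omega.
\]
First I would invoke the classical $L^p$-regularity estimate for the Neumann problem on a bounded $C^2$ domain (an elliptic boundary value problem satisfying the complementing condition): there is a constant $C_1 = C_1(\Omega, N, p)$, independent of $u$ and of the boundary datum, with
\[
	\norm{u}_{W^{2,p}(\Omega)} \le C_1 \bigl( \norm{f}_{L^p(\Omega)} + \norm{g}_{W^{1-1/p,p}(\partial\Omega)} + \norm{u}_{L^p(\Omega)} \bigr).
\]

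Next I would control the boundary datum. Since $p \in (1,\infty)$ we have $1 - \tfrac1p \in (0,1)$, and a Lipschitz function on the compact $C^2$ manifold $\partial\Omega$ is a pointwise multiplier on $W^{\sigma,p}(\partial\Omega)$ for every $\sigma \in (0,1)$, with multiplier norm bounded by a constant depending only on $\Omega, N, p, \sigma$ times its $W^{1,\infty}$-norm; this is the routine Gagliardo-seminorm computation, splitting $\beta_i(x)v(x) - \beta_i(y)v(y) = \beta_i(x)(v(x)-v(y)) + v(y)(\beta_i(x)-\beta_i(y))$ and using $|\beta_i(x)-\beta_i(y)| \lesssim \norm{\beta_i}_{W^{1,\infty}}|x-y|$. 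Writing $\Lambda \coloneqq \sup_{i \in I} \norm{\beta_i}_{W^{1,\infty}(\partial\Omega)} < \infty$, this yields
\[
	\norm{g}_{W^{1-1/p,p}(\partial\Omega)} = \norm{\beta_i u_{|\partial\Omega}}_{W^{1-1/p,p}(\partial\Omega)} \le C_2 \Lambda \norm{u_{|\partial\Omega}}_{W^{1-1/p,p}(\partial\Omega)}
\]
with $C_2 = C_2(\Omega, N, p)$. The crucial point is that the dependence on $i$ is now confined to the single number $\Lambda$.

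The third ingredient is an interpolation inequality that makes the boundary term absorbable: for every $\varepsilon > 0$ there exists $C_\varepsilon = C_\varepsilon(\Omega, N, p)$ with
\[
	\norm{u_{|\partial\Omega}}_{W^{1-1/p,p}(\partial\Omega)} \le \varepsilon \norm{u}_{W^{2,p}(\Omega)} + C_\varepsilon \norm{u}_{L^p(\Omega)}.
\]
I would derive this by combining the interpolation inequality on the boundary, $\norm{v}_{W^{1-1/p,p}(\partial\Omega)} \le \varepsilon \norm{v}_{W^{2-1/p,p}(\partial\Omega)} + C_\varepsilon \norm{v}_{L^p(\partial\Omega)}$, the trace estimate $\norm{u_{|\partial\Omega}}_{W^{2-1/p,p}(\partial\Omega)} \lesssim \norm{u}_{W^{2,p}(\Omega)}$ (valid since $\partial\Omega$ is $C^2$), the trace estimate $\norm{u_{|\partial\Omega}}_{L^p(\partial\Omega)} \lesssim \norm{u}_{W^{1,p}(\Omega)}$, and the Gagliardo--Nirenberg interpolation $\norm{u}_{W^{1,p}(\Omega)} \le \varepsilon \norm{u}_{W^{2,p}(\Omega)} + C_\varepsilon \norm{u}_{L^p(\Omega)}$; alternatively it follows at once from Ehrling's lemma applied to the compact trace map $W^{2,p}(\Omega) \hookrightarrow W^{1-1/p,p}(\partial\Omega)$. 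Plugging the last two displays into the Neumann estimate gives $\norm{u}_{W^{2,p}(\Omega)} \le C_1\bigl(\norm{f}_{L^p} + C_2\Lambda(\varepsilon\norm{u}_{W^{2,p}} + C_\varepsilon\norm{u}_{L^p}) + \norm{u}_{L^p}\bigr)$; choosing $\varepsilon$ so small that $C_1 C_2 \Lambda \varepsilon \le \tfrac12$ (the case $\Lambda = 0$ being the pure Neumann estimate), absorbing $\tfrac12\norm{u}_{W^{2,p}}$ into the left-hand side and recalling $f = A_i u$ yields the claim with $C = C(\Omega, N, p, \Lambda)$, uniform in $i \in I$.

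The main obstacle is precisely the bookkeeping of constants: the Neumann elliptic estimate and the interpolation inequality are manifestly independent of $i$, so everything hinges on the multiplier bound $\norm{\beta_i\,\cdot\,}_{\mathcal{B}(W^{1-1/p,p}(\partial\Omega))} \lesssim \norm{\beta_i}_{W^{1,\infty}(\partial\Omega)}$, i.e.\ on the fact that the Robin coefficient enters the boundary condition multiplicatively and at lower order. One should also keep an eye on the regularity thresholds: the $C^2$ assumption on $\partial\Omega$ is what makes the $W^{2,p}$ Neumann estimate and the trace statement $W^{2,p}(\Omega) \to W^{2-1/p,p}(\partial\Omega)$ available, and $u_{|\partial\Omega} \in W^{2-1/p,p}(\partial\Omega)$ is exactly the trace regularity of a $W^{2,p}(\Omega)$ function, which is consistent since $2 - \tfrac1p > 1 - \tfrac1p$.
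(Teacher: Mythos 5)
Your argument is correct and is essentially the paper's proof: both reduce the claim to the uniform $W^{2,p}$ a priori estimate for the Neumann problem on a bounded $C^2$ domain (Grisvard), bound the Robin term through a multiplier estimate of the form $\norm{\beta_i u}_{W^{1-1/p,p}(\partial\Omega)} \lesssim \norm{\beta_i}_{W^{1,\infty}(\partial\Omega)}$ times a lower-order norm of $u$, and absorb that lower-order term by an $\varepsilon$-interpolation (Ehrling) inequality uniformly in $i$. The only cosmetic difference is that the paper bounds the boundary term by $\norm{u}_{W^{1,p}(\partial\Omega)}$ and applies the Ehrling inequality to $W^{2,p}(\Omega)\hookrightarrow W^{1,p}(\Omega)$, whereas you keep it in $W^{1-1/p,p}(\partial\Omega)$ and use the compactness of the trace map.
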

	\begin{proof}
		It follows from Theorem~\cite[Theorem~2.3.3.6]{Gri85} applied to Neumann boundary conditions that there exist constants $C \ge 0$ and $\lambda > 0$ with
			\[
				\norm{u}_{W^{2,p}(\Omega)} \le C (\norm{\Delta u + \lambda u}_{L^p(\Omega)} + \norm{\partial_{\nu} u}_{W^{1-\frac{1}{p},p}(\partial \Omega)})
			\]
		for all $u \in W^{2,p}(\Omega)$. Now, if $u$ additionally lies in $D(A_i)$, then it follows that
			\begin{align*}
				\norm{u}_{W^{2,p}(\Omega)} & \lesssim \norm{\Delta u + u}_p + \norm{u}_p + \norm{\beta_i u}_{W^{1-\frac{1}{p},p}(\partial \Omega)} \\
				& \lesssim \norm{A_i u}_p + \norm{u}_p + \norm{\beta_i}_{W^{1,\infty}(\partial \Omega)} \norm{u}_{W^{1,p}(\partial \Omega)}.
			\end{align*}
		For every $\epsilon > 0$ there exists $C_{\epsilon} > 0$ such that $\norm{u}_{W^{1,p}(\Omega)} \le \epsilon \norm{u}_{W^{2,p}(\Omega)} + C_{\epsilon} \norm{u}_{L^p(\Omega)}$ as a consequence of the compactness of the embedding $W^{2,p}(\Omega) \hookrightarrow W^{1,p}(\Omega)$. This inequality for some sufficiently small $\epsilon$ together with the uniform boundedness of $\beta_i$ in $W^{1,\infty}(\Omega)$ yields the assertion.
	\end{proof}

		Now, interpolating the uniform embedding $D(A(t)') \hookrightarrow W^{2,q}(\Omega)$ obtained in the lemma against the identity $L^q(\Omega) \to L^q(\Omega)$, we obtain the uniform estimate 
		\[
			\norm{u}_{W^{r,q}(\Omega)} \lesssim \norm{u}_{(L^q(\Omega), D(A(t)'))_{r/2, q}}.
		\]
		Applying Theorem~\ref{thm:abstract_mr_non-zero} for the scale given by $(\cdot, \cdot)_{\theta,p}$, we improve the Hölder regularity in the time variable to $\beta > \frac{1}{2q} = \frac{1}{2} - \frac{1}{2p}$. We have shown the following.

	\begin{theorem}
		Let $\Omega \subset \IR^N$ be a bounded domain with $C^2$ boundary and $p \in (1, \infty)$. Suppose that $\beta\colon [0,T] \times \partial \Omega \to \IR_{\ge 0}$ satisfies for some $\alpha > \frac{1}{2} - \frac{1}{2p}$ and $M, L \ge 0$ the assumptions
		\begin{thm_enum}
    		\item $\abs{\beta(t,x) - \beta(t,y)} \le L \abs{x-y}$ for all $x, y \in \partial \Omega$ and $t \in [0,T]$,
    		\item $\abs{\beta(t,x) - \beta(s,x)} \le M \abs{t-s}^{\alpha}$ for all $t,s \in [0,T]$ and $x \in \partial \Omega$.
		\end{thm_enum}
		Then the non-autonomous Robin problem
    		\begin{equation*}
    			\left\{
    			\begin{aligned}
    				u_t(t,x) - \Delta u(t,x) & = f(t,x) & \text{on } (0,T) \times \Omega \\
					\partial_{\nu} u(t,x) + \beta(t,x) u(t,x) & = 0 & \text{on } (0,T) \times \partial\Omega \\
    				u(0,x) & = u_0(x) & \text{on } \Omega
    			\end{aligned}
    			\right.
    		\end{equation*}
		has maximal $L^q(L^p)$-regularity for all $q \in (1, \infty)$, i.e.\ for all $f \in L^q([0,T];L^p(\Omega))$ and all  initial values $u_0 \in D(A(0),X)_{1/q,q}$ the problem has a unique solution $u \in W^{1,q}([0,T];L^p(\Omega))$ with $u(t) \in D(A(t))$ for almost all $t \in [0,T]$ and $\Delta u \in L^q([0,T];L^p(\Omega))$. Furthermore, there exists a constant $C_{p,q} > 0$ independent of $f$ and $u_0$ with
			\begin{align*}
				\MoveEqLeft \norm{u}_{W^{1,q}([0,T];L^p(\Omega))} + \norm{u}_{L^q([0,T];W^{2,p}(\Omega))} \\
				& \le C_{p,q} (\norm{u_0}_{D(A(0),X)_{1/q,q}} + \norm{f}_{L^q([0,T];L^q(\Omega))}).
			\end{align*}
	\end{theorem}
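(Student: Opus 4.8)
The plan is to assemble the pieces prepared above. Fix $p \in (1,\infty)$ and for $t \in [0,T]$ let $A(t)$ be the $L^p(\Omega)$-realization of the form~\eqref{eq:form_robin} with form domain $W^{1,2}(\Omega)$ associated with the Robin problem, so that by~\cite[Theorem~2.4.2.7]{Gri85} one has $A(t)u = -\Delta u + u$ and $D(A(t)) = \{ u \in W^{2,p}(\Omega) : \partial_\nu u + \beta(t,\cdot) u = 0 \text{ on } \partial\Omega\}$. We apply Theorem~\ref{thm:abstract_mr_non-zero} to $(A(t))_{t \in [0,T]}$ on $X = L^p(\Omega)$ with the \emph{real} interpolation functor $(\cdot,\cdot)_{\theta,p}$ chosen for the underlying Banach scale; then the time integrability exponent in Theorem~\ref{thm:abstract_mr_non-zero} plays the role of our $q$, the dual scale is the one built from $(\cdot,\cdot)_{\theta,p'} = (\cdot,\cdot)_{\theta,q}$, and $X' = L^q(\Omega)$. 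Replacing $A(t)$ by $A(t) + \omega$ for $\omega$ large --- which changes neither the domains nor the boundary condition --- we may assume all $A(t)$ are invertible sectorial with uniform bounds and recover the original problem at the end via Lemma~\ref{lem:mr_shift}.

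Assumption~\ref{thm:abstract_mr_non-zero:r_boundedness} is a special case of Theorem~\ref{thm:r_boundedness_robin} (with $a_{ij} = \delta_{ij}$, $c_0 = 1$, $a_i = b_i = 0$), the requisite shift being absorbed into the $\omega$ above. For assumption~\ref{thm:abstract_mr_non-zero:hoelder} I would argue as in the \emph{second approach} above: given $u \in D(A(s))$ and $v \in D(A(t)')$, an approximation argument with elements that additionally lie in the respective $L^2$-realizations identifies the pairing with the boundary integral
\[
	\langle (A_{-1}(t) - A_{-1}(s)) u, v \rangle = \int_{\partial\Omega} (\beta(t,\cdot) - \beta(s,\cdot))\, u_{|\partial\Omega}\, v_{|\partial\Omega} \,\d\mathcal{H}_{N-1}.
\]
The trace theorem~\cite[Theorem~7.43 \& Remark~7.45~2.]{AdaFou03}, applied in $W^{2,p}(\Omega)$ and, for $r > \frac{1}{q}$, in $W^{r,q}(\Omega)$, together with the $\alpha$-Hölder-in-time bound on $\beta$, then gives
\[
	\abs{\langle (A_{-1}(t) - A_{-1}(s)) u, v \rangle} \lesssim \abs{t-s}^{\alpha}\, \norm{u}_{W^{2,p}(\Omega)}\, \norm{v}_{W^{r,q}(\Omega)}.
\]
By the preceding lemma --- whose hypotheses hold because the Lipschitz-in-space bound makes $(\beta(s,\cdot))_{s \in [0,T]}$ uniformly bounded in $W^{1,\infty}(\partial\Omega)$ --- one has $\norm{u}_{W^{2,p}(\Omega)} \lesssim \norm{A(s)u}_p$, and the same lemma applied to the adjoints $A(t)'$, which are the $L^q(\Omega)$-realizations of the same symmetric forms, yields $D(A(t)') \hookrightarrow W^{2,q}(\Omega)$ uniformly; interpolating this embedding against the identity on $L^q(\Omega)$ by the real method gives $\norm{v}_{W^{r,q}(\Omega)} \lesssim \norm{v}_{(L^q(\Omega), D(A(t)'))_{r/2,q}} = \norm{v}_{X'_{r/2, A(t)'}}$. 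Since $\alpha > \frac{1}{2} - \frac{1}{2p} = \frac{1}{2q}$ one may fix $r \in (\frac{1}{q}, 2)$ with $r < 2\alpha$, and then~\eqref{eq:hölder_estimate_non-zero} holds with $\gamma = r/2 < \alpha = \beta$.

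Theorem~\ref{thm:abstract_mr_non-zero} now yields maximal $L^q$-regularity for the shifted family and hence, by Lemma~\ref{lem:mr_shift}, for the Robin problem $u_t - \Delta u = f$, with initial data in $(D(A(0)), L^p(\Omega))_{1/q,q}$. Since $\norm{A(t)u(t)}_p = \norm{-\Delta u(t) + u(t)}_p$ is uniformly equivalent to $\norm{u(t)}_{W^{2,p}(\Omega)}$ by the preceding lemma, the abstract maximal regularity estimate turns into the asserted a priori bound featuring $\norm{u}_{L^q([0,T];W^{2,p}(\Omega))}$. I expect the main difficulty to lie in the second paragraph: making the $L^2$-approximation for the boundary-integral identity rigorous, and --- as already flagged in the text --- aligning the interpolation functors so that the Sobolev--Slobodeckij space $W^{r,q}(\Omega)$ produced by the trace estimate is precisely the dual interpolation--extrapolation space $X'_{r/2,A(t)'}$ in~\eqref{eq:hölder_estimate_non-zero}.
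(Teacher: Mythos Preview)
Your proposal is correct and follows essentially the same route as the paper's ``second approach'': identify the difference $A_{-1}(t)-A_{-1}(s)$ with the boundary integral, estimate it via the trace theorem by $\norm{u}_{W^{2,p}}\norm{v}_{W^{r,q}}$ for $r>\tfrac{1}{q}$, use the preceding lemma to get the uniform embeddings $D(A(s))\hookrightarrow W^{2,p}(\Omega)$ and $D(A(t)')\hookrightarrow W^{2,q}(\Omega)$, interpolate the latter against the identity with the real functor $(\cdot,\cdot)_{\theta,q}$ (the dual of the chosen primal functor $(\cdot,\cdot)_{\theta,p}$) to land in $X'_{r/2,A(t)'}$, and then invoke Theorem~\ref{thm:abstract_mr_non-zero} with $\gamma=r/2<\alpha$. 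The two points you flag as potential difficulties are exactly the ones the paper glosses over in a line each, and your choice of $(\cdot,\cdot)_{\theta,p}$ for the primal scale is precisely what makes the dual scale produce $(L^q,W^{2,q})_{r/2,q}=B^r_{q,q}\hookrightarrow W^{r,q}$, so the alignment works.
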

	
	\begin{remark}\label{rem:weights}
		Note that if $\beta > \max \{\frac{1}{2} - \frac{1}{2p}, \frac{1}{2p} \}$, then Theorem~\ref{thm:abstract_mr_non-zero} shows that both $(A(t))_{t \in [0,T]}$ and $(A(t)')_{t \in [0,T]}$ satisfy the Acquistapace--Terreni condition. Hence, \cite[Theorem~5.3]{ChiKro14} shows that for $u_0 = 0$ maximal $L^q$-regularity extrapolates to all weights $w \in A_q^{-}(\IR_{>0})$, e.g.\ $w(t) = t^{\alpha}$ for $\alpha \in (-\infty, q-1)$.  
	\end{remark}
	
	\begin{remark}
		As already said, the Lipschitz assumption in the spatial variables is made for the ease of presentation because it allows us to identify the domain of the operator explicitly. Note that for the last proof to work the validity of the weaker estimates $\norm{u}_{W^{1,p}(\Omega)} \lesssim \norm{A(t)u}_p + \norm{u}_p$ and $\norm{v}_{W^{1,q}(\Omega)} \lesssim \norm{v}_{D(A(s)'^{1/2})}$, i.e.\ the boundedness of the associated Riesz transform on $L^q(\Omega)$, would be sufficient. The first estimate holds in particular if $\Omega$ has $C^{1}$ boundary and $\beta$ is merely assumed to lie in $L^{\infty}([0,T] \times \partial \Omega)$~\cite[Theorem~5(ii)]{DonKim10}, whereas for the second we do not know any explicit reference. For details we refer to the next section where the same approach is carried out for second order elliptic operators in divergence form.
	\end{remark}
	
\section{Maximal Regularity for Non-Autonomous Second Order Elliptic Operators in Divergence Form}

	In our second example we apply Theorem~\ref{thm:abstract_mr_non-zero} to time dependent second order elliptic operators in divergence form with real coefficients. The operators are studied on $C^1$ domains subject to Dirichlet boundary conditions. As the used results are rather involved, we neither discuss the case of complex coefficients nor lower order terms and different boundary conditions. However, extensions in these directions are certainly possible as long as the used regularity results are true. Having this in mind, we will comment on the validity of the used results in more general situations.
	
	Let us begin with recalling some concepts from the theory of elliptic equations. Let $\Omega \subset \IR^N$ be a bounded domain and assume that $A = (a_{ij})_{1 \le i,j \le N}\colon \Omega \to \IR^{N \times N}$ is bounded and measurable and satisfies the ellipticity condition~\eqref{eq:ellipticity_condition}. For given $f \in L^p(\Omega)$ we say that the function $u \in W^{1,p}_0(\Omega)$ is a \emph{weak solution} of the inhomogeneous elliptic problem
	\begin{equation*}
		\tag{EP}
		\label{eq:elliptic_problem}
		-\divv(A \nabla u) = f
	\end{equation*}
	subject to Dirichlet boundary conditions if one has
	\begin{equation*}
		\int_{\Omega} A \nabla u \cdot \nabla \phi = \int_{\Omega} f \phi \qquad \text{for all } \phi \in C_c^{\infty}(\Omega).
	\end{equation*}
	The above equality extends to all $\phi \in W_0^{1,q}(\Omega)$. Consider now the form
	\begin{align*}
		W_0^{1,2}(\Omega) \times W_0^{1,2}(\Omega) \ni (u,v) & \mapsto \int_{\Omega} A \nabla u \cdot \nabla v,
	\end{align*} 
	a special case of~\eqref{eq:form_dn}. Let $L_2$ be the associated operator on $L^2(\Omega)$ as explained in Section~\ref{sec:form_method}. Then it follows from the definition of $L_2$ that for $u \in W_0^{1,2}(\Omega)$ one has $u \in D(L_2)$ with $L_2 u = f$ if and only if
		\begin{equation*}
			\int_{\Omega} A \nabla u \cdot \nabla \phi = \int_{\Omega} f \phi \qquad \text{for all } \phi \in C_c^{\infty}(\Omega).
		\end{equation*}
	As shown in Section~\ref{sec:form_elliptic_operator} one can extrapolate the operator $L_2$ to $L^p(\Omega)$ for $p \in (1, \infty)$. We denote by $L_p$ the extrapolated operator on $L^p(\Omega)$. Note that if $p > 2$, then $L_p$ is the part of $L_2$ in $L^p(\Omega)$, i.e.\
		\[
			D(L_p) = \{ u \in L^p(\Omega): L_2 u \in L^p(\Omega) \}.
		\]
	If the coefficients $(a_{ij})$ are merely continuous or measurable, then one cannot determine the domain of $L_p$ explicitly. However, one has $D(L_2) \subset W_0^{1,2}(\Omega)$.
	
\subsection{The higher dimensional case}
	
	Under weak regularity assumptions on the coefficients the inclusion $D(L_2) \subset W_0^{1,2}(\Omega)$ remains true for $p \in (1, \infty)$ and sufficiently regular domains $\Omega \subset \IR^N$. In order to state the results we introduce some new function spaces. We say that a locally integrable function $f\colon \Omega \to \IR$ lies in $BMO(\Omega)$ (put in words, $f$ has \emph{bounded mean oscillation}) if
	\begin{equation*}
		\sup_{B} \frac{1}{\abs{B}} \int_{B} \abs{f(x) - f_B} \d x < \infty,
	\end{equation*}
	where the supremum is taken over all non-empty balls $B \subset \Omega$ of finite measure and $f_B$ denotes the mean of $f$ over $B$. Further, for $f \in BMO(\Omega)$ and $r > 0$ let
	\begin{equation*}
		\eta(r) = \sup_{\rho \le r} \frac{1}{\abs{B_{\rho}}} \int_{B_{\rho}} \abs{f(x) - f_B} \d x.
	\end{equation*}
	Here the supremum runs over all balls $B_{\rho} \subset \Omega$ with radius $\rho$ smaller than $r$. We say that $f \in VMO(\Omega)$ (for \emph{vanishing mean oscillation}) if $\lim_{r \downarrow 0} \eta(r) = 0$ and call $\eta$ the \emph{vmo-modulus} of $f$.
	
	For our further purposes it is important to get uniform control over the involved constants. In the following we will say that the constant only depends on the vmo-modulus of continuity. This means the following: suppose that for a family of functions with vmo-moduli $\eta_i$ there exists a function $\eta\colon (0,1] \to \overline{\IR_{\ge 0}}$ with $\lim_{r \downarrow 0} \eta(r) = 0$ and $\eta_i(r) \le \eta(r)$ for all $r \in (0,1]$ and $i \in I$. Then the constant can be chosen independently of the functions.
	
	For coefficients with vanishing mean oscillation we obtain the inclusion $D(L_p) \subset W_0^{1,p}(\Omega)$ for domains with $C^1$ boundary.
		
	\begin{proposition}\label{prop:sobolev_elliptic_operator}
		For $N \ge 2$ let $\Omega \subset \IR^N$ be a bounded domain with $C^1$ boundary and $p \in [2, \infty)$. Further choose $A \in L^{\infty}(\Omega; \IR^{N\times N})$ that satisfies the ellipticity estimate \eqref{eq:ellipticity_condition} and whose components are in $VMO(\Omega)$. Then $D(L_p) \subset W_0^{1,p}(\Omega)$ and there exists a constant $C \ge 0$ only  depending on $\Omega$, $p$, $N$, $\alpha_0$ in \eqref{eq:ellipticity_condition}, upper bounds for $\norm{A}_{\infty}$ and the vmo-moduli of continuity of the entries of $A$ such that
		\[
			\norm{u}_{W_0^{1,p}(\Omega)} \le C \norm{u}_{D(L_p)} \qquad \forall u \in D(L_p).
		\] 
	\end{proposition}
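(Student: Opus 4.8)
The plan is to reduce the statement to the global $W^{1,p}$-regularity theory for divergence form equations with $VMO$ coefficients on $C^1$ domains. Let $u \in D(L_p)$ and set $f \coloneqq L_p u \in L^p(\Omega)$. Since $u \in L^p(\Omega) \subset L^2(\Omega)$ and, for $p > 2$, $L_p$ is the part of $L_2$ in $L^p(\Omega)$ (and trivially for $p=2$), one has $u \in D(L_2)$ with $L_2 u = f$; by the excerpt $D(L_2) \subset W_0^{1,2}(\Omega)$, so $u \in W_0^{1,2}(\Omega)$, and by the definition of $L_2$,
\[
	\int_\Omega A \nabla u \cdot \nabla \phi = \int_\Omega f \phi \qquad \text{for all } \phi \in C_c^\infty(\Omega),
\]
i.e.\ $u$ is a weak solution of~\eqref{eq:elliptic_problem} with right hand side $f$. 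For $p = 2$ one is already done: testing with $u$, coercivity of the form and the Poincaré inequality give $\norm{u}_{W_0^{1,2}(\Omega)} \le C(\alpha_0,\Omega)\norm{f}_{L^2(\Omega)} = C\norm{u}_{D(L_2)}$. Moreover, since $\Omega$ is bounded, Hölder's and Poincaré's inequalities yield a continuous embedding $L^p(\Omega) \hookrightarrow W^{-1,p}(\Omega) \coloneqq (W_0^{1,p'}(\Omega))'$ with norm depending only on $\Omega$ and $p$, whence $\norm{f}_{W^{-1,p}(\Omega)} \lesssim \norm{f}_{L^p(\Omega)} = \norm{u}_{D(L_p)}$.

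Next I would invoke the following global higher integrability statement for the gradient: there is a constant $C$ depending only on $N$, $p$, the $C^1$ character of $\Omega$, $\alpha_0$, an upper bound for $\norm{A}_\infty$, and a common $vmo$-modulus dominating the $vmo$-moduli of the entries of $A$, such that every $v \in W_0^{1,2}(\Omega)$ with $-\divv(A\nabla v) = g$ in $\mathcal{D}'(\Omega)$ for some $g \in W^{-1,p}(\Omega)$ lies in $W_0^{1,p}(\Omega)$ and satisfies $\norm{v}_{W_0^{1,p}(\Omega)} \le C \norm{g}_{W^{-1,p}(\Omega)}$. Applied to $v = u$ and $g = f$ and combined with the embedding above, this produces $u \in W_0^{1,p}(\Omega)$ together with the claimed bound $\norm{u}_{W_0^{1,p}(\Omega)} \le C' \norm{u}_{D(L_p)}$, with $C'$ of the required form. (Alternatively one could route the argument through the $L^p$-boundedness of the Riesz transform $\nabla L_p^{-1/2}$ together with the moment inequality $\norm{L_p^{1/2}u}_p \lesssim \norm{L_p u}_p^{1/2}\norm{u}_p^{1/2}$, but this only trades one hard input for another.)

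The substance is entirely in the a priori estimate, and above all in the bookkeeping that its constant depends on $A$ only through $\norm{A}_\infty$ and a common $vmo$-modulus. This is obtained along the usual lines: interior estimates on small balls by freezing the coefficients and comparing with the constant-coefficient problem, whose $W^{1,p}$ theory is classical Calderón--Zygmund theory, the $VMO$ hypothesis making the resulting perturbation uniformly small at small scales; boundary estimates by flattening $\partial\Omega$ through a $C^1$ change of variables, under which the transported coefficients remain $VMO$ with a modulus controlled by the original one and by the $C^1$ character of $\Omega$ — it is precisely here that $C^1$ rather than mere Lipschitz regularity is needed, so that every $p \in [2,\infty)$ is admissible; and finally a Calderón--Zygmund/Vitali covering (good-$\lambda$) argument promoting the local reverse-Hölder inequalities to the global $L^p$ bound on $\nabla u$. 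I expect this last verification of uniform dependence of constants to be the main obstacle, and rather than reproduce the proof one would quote a reference in which this dependence is tracked explicitly.
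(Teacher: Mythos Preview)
Your proposal is correct and follows the same overall strategy as the paper: identify $u\in D(L_p)$ as a $W_0^{1,2}$ weak solution of $-\divv(A\nabla u)=f$ with $f\in L^p$, then import the global $W^{1,p}$ regularity theory for VMO coefficients on $C^1$ domains with tracked constants. The only substantive difference is how the right-hand side is put into the form needed by that theory. You use the elementary Poincar\'e embedding $L^p(\Omega)\hookrightarrow W^{-1,p}(\Omega)$ directly. The paper instead constructs an explicit potential: it solves $\Delta\Phi=f$ with the Dirichlet Laplacian, invokes the Jerison--Kenig bound $\norm{\nabla\Phi}_p\le C\norm{f}_p$ on $C^1$ domains, and then applies \cite[Theorem~1]{AusQaf02} to $\divv(A\nabla v)=\divv(\nabla\Phi)$, matching the ``$\divv F$ with $F\in L^p$'' formulation of that reference and obtaining $\norm{\nabla v}_p\lesssim\norm{\nabla\Phi}_p$; uniqueness of weak solutions then gives $v=u$. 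Your route is slightly more economical in that it avoids the Jerison--Kenig step, but it trades this for needing the regularity statement phrased for $W^{-1,p}$ data rather than $\divv(L^p)$ data; since these formulations are equivalent on bounded domains, the difference is cosmetic. The paper also notes explicitly where in \cite{AusQaf02} the dependence on the vmo-modulus enters, which addresses exactly the bookkeeping concern you flag at the end.
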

	\begin{proof}
		Let $u \in D(L_p)$ and set $f = L_p u \in L^p(\Omega)$. Since a fortiori $u$ is contained in $D(L_2)$, we have
			\begin{equation*}
				\label{eq:weak_solution_elliptic}
				\int_{\Omega} A \nabla u \cdot \nabla \phi = \int_{\Omega} f \phi \qquad \text{for all } \phi \in C_c^{\infty}(\Omega).
			\end{equation*}
		Hence, $u$ is a weak solution of the problem~\eqref{eq:elliptic_problem}. Now, note that the Dirichlet Laplacian on $\Omega$ is an isomorphism $\Delta\colon D(\Delta) \xrightarrow{\sim} L^p(\Omega)$. Hence, there exists a potential $\Phi \in D(\Delta)$ with $\Delta \Phi = f$. In particular by~\cite[Theorem~B]{JerKen95}, one has $\nabla \Phi \in L^p(\Omega; \IR^N)$ with $\norm{\nabla \Phi}_p \le C \norm{f}_p$ for some $C \ge 0$ independent of $f$. It follows from~\cite[Theorem~1]{AusQaf02} that the problem
			\[
				\divv(A\nabla v) = \divv(\nabla \Phi) = f
			\]
		has a unique solution $v \in W_0^{1,p}(\Omega)$ with $\norm{\nabla v}_{p} \lesssim \norm{\nabla \Phi}_p$. Furthermore, the implicit constant only depends on on $\Omega$, $p$, $N$, $\alpha_0$ in \eqref{eq:ellipticity_condition}, upper bounds for $\norm{A}_{\infty}$ and the vmo-moduli of continuity of the entries of $A$. For the dependence on the vmo-moduli observe that the moduli only start to play a role from~\cite[page~506]{AusQaf02} on, where they are used to make the mean oscillation of some localized coefficients arbitrarily small. Now,
			\begin{align*}
				\norm{\nabla v}_p \lesssim \norm{\nabla \Phi}_p \lesssim \norm{f}_p = \norm{L_p u}_p.
			\end{align*}
		Comparing the solution $v$ with~\eqref{eq:weak_solution_elliptic}, we see that $v = u$ by the uniqueness of the weak solution. Hence, $u \in W_0^{1,p}(\Omega)$ and the desired estimate holds.
	\end{proof}
	
	\begin{remark}\label{rem:more_general_estimate}
		In fact, the result~\cite[Theorem~1]{AusQaf02} also includes complex coefficients under the same assumptions. Even more, the Dirichlet boundary conditions can be replaced by Neumann boundary conditions. Of course, in this case  the solution is only unique up to constants. For fixed $p \in (1, \infty)$ one can extend the above result to coefficients in $BMO$ and to Lipschitz domains, provided the involved constants are small enough~\cite{Byu05}. Further, the result holds if loosely spoken the coefficients are only measurable in one component and $VMO$ in the other components~\cite[Theorem~4]{DonKim10}.		
	\end{remark}

	Even for smooth domains Proposition~\ref{prop:sobolev_elliptic_operator} does not hold for arbitrary bounded measurable coefficients as the following slight adaption of an example due to Meyers \cite[Section~5]{Mey63} (see also~\cite[p.~1026]{Byu05}) shows.
	
	\begin{example}\label{ex:counterexample_regularity}
		On the unit ball $B \subset \IR^2$ consider the real symmetric coefficient matrix
		\[
			A(x,y) = \frac{1}{4(x^2+y^2)} \begin{pmatrix}
					4x^2 + y^2 & 4xy \\
					4xy & x^2+4y^2
				\end{pmatrix}.
		\]
		One can verify that $u(x,y) = \frac{x}{(x^2+y^2)^{1/4}}$ is a solution of the problem $\divv(A \nabla u) = 0$ for which $\nabla u \not\in L^p(B)$ holds for $p \ge 4$. Now, choose a smooth cut-off function $0 \le \psi \le 1$ centered at the origin with compact support in $B$. Consider the function $w = u \psi$. Then for all points different from zero we have
		\begin{align*}
			\MoveEqLeft \divv(A\nabla w) = \divv(A \nabla (u \psi)) = \divv(\psi A \nabla u + u A \nabla \psi) \\
			& = \psi \divv(A \nabla u) + \nabla \psi \cdot A \nabla u + u \divv(A \nabla \psi) + A \nabla \psi \cdot \nabla u \\
			& = 2 A \nabla \psi \cdot \nabla u + u \divv(A \nabla \psi).
		\end{align*}
		Clearly, this function is $C^{\infty}$ away from zero. Further, in a neighborhood of zero one has $\nabla \psi = 0$ and therefore the right hand side vanishes there. Since $w$ is compactly supported, this shows that $Lw \in L^p(\Omega)$ for all $p \in [1, \infty]$. As before denote by $L_p$ the realization of $-\divv(A \nabla \cdot)$. Since $D(L_p)$ is the part of $L_2$ in $L^p(B)$ for $p > 2$, we get $w \in D(L_p)$ for all $p \in (2, \infty)$. However, since $w = u$ in neighborhood of zero, we have $\nabla w \not\in L^p(B)$ for $p \ge 4$. Hence, $D(L_p)$ contains elements that are not in $W^{1,p}(B)$.
	\end{example}
	
	Further, even for the Laplacian some regularity on the domain is necessary to obtain the assertion of Proposition~\ref{prop:sobolev_elliptic_operator}. In fact,  it is known~\cite[Theorem~A]{JerKen95} that for $p > 3$ there exist Lipschitz domains for which the domain of the Dirichlet Laplacian on $L^p(\Omega)$ contains elements that are not in $W^{1,p}(\Omega)$.  
	
	The result given in Proposition~\ref{prop:sobolev_elliptic_operator} is also true for $p \in (1,2)$, but in this case even more holds. Recall that $L_p$ is sectorial. Thus its fractional powers are well-defined.
		
	\begin{theorem}\label{thm:kato_square_root}
		For $N \ge 2$ let $\Omega \subset \IR^N$ be a bounded Lipschitz domain and $p \in (1, 2]$. Further choose $A \in L^{\infty}(\Omega; \IR^{N \times N})$ that satisfies the ellipticity estimate~\eqref{eq:ellipticity_condition}. Then there exists a constant $C \ge 0$ only depending on $N$, $p$, $\alpha_0$ in~\eqref{eq:ellipticity_condition}, the Lipschitz constant of $\Omega$ and on upper bounds for $\norm{A}_{\infty}$ such that
		\[
			\norm{\nabla f}_p \le C \normalnorm{L_p^{1/2} f}_p \qquad \text{for all } f \in D(L_p^{1/2}). 
		\]
	\end{theorem}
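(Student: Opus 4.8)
The inequality is the $L^p$-boundedness, for $1<p\le2$, of the Riesz transform $R_p:=\nabla L_p^{-1/2}$ attached to the Dirichlet realisation of a divergence form operator on a bounded Lipschitz domain, with operator norm controlled only by the data listed. The plan is to deduce it from the $L^2$-case by downward extrapolation, using the Gaussian heat kernel bounds already recorded in Section~\ref{sec:r_boundedness}.

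\emph{The $L^2$-endpoint.} Since $\Omega$ is bounded, the form $(u,v)\mapsto\int_\Omega A\nabla u\cdot\nabla v$ on $W_0^{1,2}(\Omega)$ is coercive by Poincaré's inequality, so $L_2$ is an invertible sectorial operator and the subordination formula $L_2^{-1/2}=c_0\int_0^\infty e^{-tL_2}\,t^{-1/2}\,\d t$ (with $c_0$ a universal constant) converges. The solution of the Kato square root problem for divergence form operators, in the version valid on strongly Lipschitz domains with Dirichlet boundary conditions (Auscher--Tchamitchian's $L^2$-theory of square roots together with the solution of the Kato conjecture on $\IR^N$ by Auscher--Hofmann--Lacey--McIntosh--Tchamitchian), asserts that $D(L_2^{1/2})=W_0^{1,2}(\Omega)$ and
\[
  C^{-1}\norm{\nabla f}_2\le\normalnorm{L_2^{1/2}f}_2\le C\norm{\nabla f}_2\qquad\text{for all }f\in W_0^{1,2}(\Omega),
\]
where $C$ depends only on $N$, $\alpha_0$ in~\eqref{eq:ellipticity_condition}, an upper bound for $\norm{A}_\infty$ and the Lipschitz constant of $\Omega$; the uniformity is inherent in the statement, the proof yielding an estimate phrased solely in the ellipticity and boundedness constants. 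In particular $R_2$ is bounded on $L^2(\Omega)$ with norm of that order.

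\emph{Extrapolation inputs and conclusion.} For divergence form operators the families $(e^{-tL_2})_{t>0}$ and $(\sqrt t\,\nabla e^{-tL_2})_{t>0}$ obey $L^2$ off-diagonal (Davies--Gaffney) estimates
\[
  \norm{e^{-tL_2}u}_{L^2(E)}+\norm{\sqrt t\,\nabla e^{-tL_2}u}_{L^2(E)}\le C\exp\!\bigl(-c\,d(E,F)^2/t\bigr)\norm{u}_{L^2(F)}
\]
for Borel sets $E,F\subset\Omega$ and $u\in L^2(\Omega)$ supported in $F$, with $C,c$ depending only on $\alpha_0$ and $\norm{A}_\infty$; this is the usual exponential perturbation argument on the form, and since multiplication by bounded Lipschitz weights preserves $W_0^{1,2}(\Omega)$ the Dirichlet condition poses no obstruction. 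By the heat kernel bounds of Daners recalled in Section~\ref{sec:r_boundedness}, $(e^{-tL_2})_{t>0}$ extends consistently to each $L^p(\Omega)$ and admits Gaussian upper bounds with constants depending only on the data, hence $L^1\to L^2$ off-diagonal bounds with the correct scaling; moreover $\Omega$ with the Euclidean distance and Lebesgue measure is a space of homogeneous type. I would then apply the Calderón--Zygmund theory for operators lacking pointwise kernel bounds (Blunck--Kunstmann; Auscher--Coulhon--Duong--Hofmann; see also Auscher's memoir) to $R_2=c_0\int_0^\infty\nabla e^{-tL_2}\,t^{-1/2}\,\d t$, using the approximation operators $I-(I-e^{-r^2L_2})^{m}$ for a large enough integer $m$: the $L^2$-boundedness of $R_2$ and the off-diagonal inputs verify the hypotheses and yield that $R_2$ is of weak type $(1,1)$; interpolating with the $L^2$ bound gives boundedness of $R_2$ on $L^p(\Omega)$ for every $p\in(1,2]$, with norm controlled by $N$, $p$, $C$ and the Gaussian/Davies--Gaffney constants. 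Since $L_p$ and $L_p^{1/2}$ are compatible with $L_2$ and $L_2^{1/2}$, this $L^p$-estimate restricts to the claimed inequality on the dense subspace $D(L_p^{1/2})\cap L^2$, and then extends to all of $D(L_p^{1/2})$ by density.

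\emph{Main obstacle.} The chain of implications is standard; the real work is the bookkeeping of uniform constants. Both black boxes --- the $L^2$ Kato estimate and the Calderón--Zygmund extrapolation scheme --- are customarily stated with qualitative constants, so one must go through their proofs and confirm that every quantity entering the final bound is controlled by $N$, $\alpha_0$, $\norm{A}_\infty$ and the Lipschitz constant of $\Omega$ alone, and never by finer features of $A$. A secondary point is to fix the precise boundary-condition-adapted form of the $L^2$ Kato theorem and to check the compatibility of the fractional powers $L_p^{1/2}$ with $L_2^{1/2}$, so that the estimate passes cleanly between the $L^2$- and $L^p$-realisations.
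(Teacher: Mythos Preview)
Your proposal is correct and follows essentially the same route as the paper: the $L^2$ Kato square root estimate on Lipschitz domains, a weak type $(1,1)$ bound for $\nabla L_2^{-1/2}$ via Calder\'on--Zygmund theory without pointwise kernels (the paper cites Duong--McIntosh directly, relying on the Gaussian bounds, where you invoke the later Blunck--Kunstmann/Auscher--Coulhon--Duong--Hofmann framework), interpolation to $p\in(1,2)$, and a density argument passing from $D(L_2^{1/2})$ to $D(L_p^{1/2})$. You also correctly flag the constant-tracking as the main labour, which the paper handles by a ``close and tedious inspection'' of the Duong--McIntosh proof together with the uniform heat kernel estimates already established.
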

	\begin{proof}
		It follows from the solution of the Kato square root problem on Lipschitz domains~(\cite[Theorem~1]{AKM06}, see also the first proof for the Dirichlet case in~\cite{AusTch03} that does not give the dependencies of the constant) that there exists a constant $C = C(N, \norm{A}_{\infty}, \alpha_0, M)$, where $M$ is the Lipschitz constant of $\Omega$, such that
			\[
				\norm{\nabla f}_p \le C \normalnorm{L_2^{1/2} f}_p \qquad \text{for all } f \in D(L_2^{1/2}).
			\]
		Note that in the $L^2$ case the above inequality follows from the reverse inequality by a duality argument. Now, let us consider the case $p \in (1,2)$. It follows from a close and tedious inspection of the proof of~\cite[Theorem~2]{DuoMcI99} that the weak type (1,1) estimate obtained there for $\nabla L_2^{-1/2}$ only depends on $N$ and on upper bounds for $\norm{A}_{\infty}$ and $\alpha_0$ provided the operators satisfy uniform heat kernel estimates (a detailed exposition of this proof can be found in~\cite{Ouh05}). Note that we already know this fact from the proof of Theorem~\ref{thm:r_boundedness_dirichlet}. Hence, for $p \in (1, 2)$, it follows from interpolation that
			\[
				\normalnorm{\nabla f}_p \le C \normalnorm{L_2^{1/2} f}_p \qquad \text{for all } f \in D(L_2^{1/2})
			\]
		for some constant $C = C(p, N, \norm{A}_{\infty}, \alpha_0, M)$. Since $D(L_2^{1/2})$ is a core for $L_p^{1/2}$, the above identity gives by density
			\[
				\norm{\nabla f}_p \le C \normalnorm{L_p^{1/2} f}_p \qquad \text{for all } f \in D(L_p^{1/2}). \qedhere
			\]
	\end{proof}
	
	\begin{remark}
		One even obtains equivalence between $D(L_p^{1/2})$ and $W_0^{1,p}(\Omega)$ for an interval of the form $p \in (1, 2 + \epsilon)$, where $\epsilon$ decreases with the dimension. However, for $N \ge 2$ the result cannot be extended to the full range $p \in (1, \infty)$ as Example~\ref{ex:counterexample_regularity} shows. Further, the Kato square root property holds for complex coefficients in the range $p \in (1, 2 + \epsilon)$ provided the leading coefficients are assumed to lie in $VMO$. For further recent results, including mixed boundary conditions on rough domains, see~\cite{ANHR15}.
	\end{remark}
	
	We are now ready to verify the Hölder estimate. Theorem~\ref{thm:abstract_mr_non-zero} then gives maximal regularity on $L^p(\Omega)$ for $p \ge 2$, which is the crucial range for applications. Note that we only need the estimate given in Theorem~\ref{thm:kato_square_root} in the range $p \in (1,2]$.
	
	\begin{theorem}\label{thm:mr_elliptic}
		For $N \ge 2$ let $\Omega \subset \IR^N$ be a bounded domain with $C^1$-boundary and $p \in [2, \infty)$. Further let $A\colon [0,T] \times \Omega \to \IR^{N \times N}$ be bounded and measurable with the following properties.
		\begin{thm_enum}
			\item The ellipticity estimate~\eqref{eq:ellipticity_condition} for $A(t, \cdot)$ holds uniformly in $t \in [0,T]$.
			\item The vmo-moduli of the components of $A(t,\cdot)$ can be dominated on $(0,1]$ by a function $\eta\colon (0,1] \to \overline{\IR_{\ge 0}}$ with $\lim_{r \downarrow 0} \eta(r) = 0$.
			\item There exist constants $C \ge 0$ and $\beta \in (1/2,1]$ such that for all $s,t \in [0,T]$
				\[
					\norm{A(t,\cdot) - A(s,\cdot)}_{\infty} \le C \abs{t-s}^{\beta}.
				\]
		\end{thm_enum}
		Then the non-autonomous second order problem in divergence form
    		\begin{equation*}
    			\left\{
    			\begin{aligned}
    				u_t(t,x) - \divv(A(t,x) \nabla u(t,x)) & = f(t,x) & \text{on } (0,T) \times \Omega \\
					u(t,x) & = 0 & \text{on } (0,T) \times \partial \Omega \\
    				u(0,x) & = u_0(x) & \text{on } \Omega
    			\end{aligned}
    			\right.
    		\end{equation*}
		has maximal $L^q(L^p)$-regularity for $q \in (1, \infty)$, i.e.\ for all $f \in L^q([0,T];L^p(\Omega))$ and all $u_0 \in D(A(0),X)_{1/q,q}$ the problem has a unique solution $u \in W^{1,q}([0,T];L^p(\Omega))$ with $u(t) \in D(L_p(t))$ for almost all $t \in [0,T]$ and $L_p(\cdot)u(\cdot) \in L^q([0,T];L^p(\Omega))$. Furthermore, there exists a constant $C_{p,q} > 0$ independent of $f$ and $u_0$ with
			\begin{align*}
				\MoveEqLeft \norm{u}_{W^{1,q}([0,T];L^p(\Omega))} + \norm{L_p(\cdot)u(\cdot)}_{L^q([0,T];L^p(\Omega))} \\
				& \le C_{p,q} (\norm{u_0}_{D(A(0),X)_{1/q,q}} + \norm{f}_{L^q([0,T];L^q(\Omega))}).
			\end{align*}
	\end{theorem}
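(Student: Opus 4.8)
The plan is to apply Theorem~\ref{thm:abstract_mr_non-zero} with the UMD space $X = L^p(\Omega)$ (recall $p \in (1,\infty)$), with $q$ in the role of the time integrability exponent of that theorem, and with $A(t) = L_p(t)$ the $L^p$-realization of $u \mapsto -\divv(A(t,\cdot)\nabla u)$ under Dirichlet boundary conditions. Since $\Omega$ is bounded, Poincar\'e's inequality together with uniform ellipticity shows that the forms $a_t(u,v) = \int_\Omega A(t,\cdot)\nabla u\cdot\nabla v$ are coercive on $W_0^{1,2}(\Omega)$ uniformly in $t$, so each $L_p(t)$ is an invertible sectorial operator and the interpolation-extrapolation scale $(X_{\alpha,L_p(t)})_{\alpha}$ as well as the dual scale are well defined. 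Assumption~\ref{thm:abstract_mr_non-zero:r_boundedness} is then precisely the content of Theorem~\ref{thm:r_boundedness_dirichlet}: as we deal with the pure second order operator (all lower order coefficients vanish) one may take $\omega_0 = 0$ there and obtains an angle $\theta \in (0,\tfrac{\pi}{2})$ with $\sigma(L_p(t)) \subset \overline{\Sigma_\theta}$ and $\R{\lambda R(\lambda,L_p(t)): \lambda \notin \overline{\Sigma_\theta},\ t \in [0,T]} < \infty$; the variant with $(1+\abs{\lambda})$ in front follows by treating $\abs{\lambda} \le 1$ separately, using that the resolvents are analytic and $\mathcal{R}$-bounded on a fixed neighbourhood of $0$ uniformly in $t$. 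The entire remaining work is the verification of the Hölder estimate~\ref{thm:abstract_mr_non-zero:hoelder}, which I shall establish with $\gamma = \tfrac12$, so that the hypothesis $\beta > \tfrac12 = \gamma$ is exactly what is needed.

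For this fix $s,t \in [0,T]$, $x \in D(L_p(s))$ and $x' \in D(L_p(t)')$. The first point is to reduce the left hand side to the underlying forms, i.e.\ to show
\[
	\langle (A_{-1}(t) - A_{-1}(s))x, x' \rangle = \int_\Omega (A(t,\cdot) - A(s,\cdot))\nabla x \cdot \nabla x'.
\]
Since $p \ge 2$ and the vmo-moduli of the entries of $A(t,\cdot)$ are dominated by the single function $\eta$, Proposition~\ref{prop:sobolev_elliptic_operator} gives $D(L_p(s)) \subset W_0^{1,p}(\Omega)$ with $\norm{\nabla x}_p \lesssim \norm{x}_{D(L_p(s))}$ and a constant independent of $s$. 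On the other side, the adjoint $L_p(t)'$ is again a divergence form operator of the type covered by Theorem~\ref{thm:kato_square_root}, namely the $L^{p'}$-realization of the operator with transposed coefficient matrix $A(t,\cdot)^{T}$, which satisfies~\eqref{eq:ellipticity_condition} with the same $\alpha_0$ and has the same $L^\infty$-norm; as $p' \in (1,2]$, Theorem~\ref{thm:kato_square_root} applies and yields $\norm{\nabla x'}_{p'} \lesssim \norm{(L_p(t)')^{1/2}x'}_{p'}$, uniformly in $t$. The displayed identity itself is obtained by first checking it for pairs lying in the respective $L^2$-realizations, where it reduces to the defining bilinear forms, and then passing to the limit: one approximates $x'$ by $x'_n \in D(L_p(t)') \cap D(L_2(t)')$ in the graph norm of $L_p(t)'$ (possible by consistency of the resolvents on $L^{p'} \cap L^2$), uses the gradient estimate of Theorem~\ref{thm:kato_square_root} for the convergence on the form side, and uses $A_{-1}(s)x = L_p(s)x \in L^p(\Omega)$ together with density of $W_0^{1,2} \cap W_0^{1,p'}$ in $W_0^{1,p'}$ for the $A_{-1}(s)$-term.

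Granting the identity, Hölder's inequality and the assumed time regularity $\norm{A(t,\cdot)-A(s,\cdot)}_\infty \le C \abs{t-s}^\beta$ give
\[
	\abs{\langle (A_{-1}(t) - A_{-1}(s))x, x' \rangle} \le \norm{A(t,\cdot)-A(s,\cdot)}_\infty \norm{\nabla x}_p \norm{\nabla x'}_{p'} \lesssim \abs{t-s}^\beta \norm{x}_{D(L_p(s))} \norm{(L_p(t)')^{1/2}x'}_{p'}.
\]
By Theorem~\ref{thm:bip_second_order} (again with $\omega_0 = 0$, since $c_0 = 0$ and all lower order coefficients vanish) the adjoint operators $L_p(t)'$ have uniformly bounded imaginary powers, so Proposition~\ref{prop:bip_fractional} applied to the complex interpolation scale yields $\norm{(L_p(t)')^{1/2}x'}_{p'} = \norm{x'}_{D((L_p(t)')^{1/2})} \simeq \norm{x'}_{[L^{p'}(\Omega),D(L_p(t)')]_{1/2}} = \norm{x'}_{X'_{1/2,L_p(t)'}}$ with a constant independent of $t$. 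Hence~\ref{thm:abstract_mr_non-zero:hoelder} holds with $\gamma = \tfrac12$, and Theorem~\ref{thm:abstract_mr_non-zero} gives maximal $L^q$-regularity for $(L_p(t) + \mu)_{t \in [0,T]}$ for every $\mu \in \IR$, in particular for $\mu = 0$, with initial data in $(D(L_p(0)), L^p(\Omega))_{1/q,q}$. Unwinding the definition of maximal $L^q$-regularity and the a priori estimate~\eqref{eq:general_mr_estimate} produces exactly the stated existence, uniqueness and norm estimate; note that the conclusion is phrased entirely through $L_p(\cdot)u(\cdot)$, so no identification of $D(L_p(t))$ with $W^{2,p}(\Omega) \cap W_0^{1,p}(\Omega)$ -- which is false for rough coefficients, cf.\ Example~\ref{ex:counterexample_regularity} -- is required.

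I expect the main obstacle to be the bookkeeping of uniformity: one must check that the constants produced by Proposition~\ref{prop:sobolev_elliptic_operator}, Theorem~\ref{thm:kato_square_root}, Theorem~\ref{thm:bip_second_order} and Theorem~\ref{thm:r_boundedness_dirichlet} depend only on $N$, $p$, $\alpha_0$, $\norm{A}_\infty$ and the common vmo-modulus $\eta$, and not on the particular time, and that all of these statements survive the passage to the transposed coefficient matrix needed for the $x'$-variable. The limiting argument producing the form identity is routine, but it is the place where the two one-sided regularity facts -- the domain inclusion for $L_p(s)$ at exponent $p \ge 2$ and the square root estimate for $L_p(t)'$ at exponent $p' \le 2$ -- must be combined with care.
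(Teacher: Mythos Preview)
Your proposal is correct and follows essentially the same route as the paper: verify the $\mathcal{R}$-boundedness hypothesis via Theorem~\ref{thm:r_boundedness_dirichlet}, establish the form identity $\langle (A_{-1}(t)-A_{-1}(s))x,x'\rangle = \int_\Omega (A(t)-A(s))\nabla x\cdot\nabla x'$ by approximating $x'$ through elements of the $L^2$-domain and using Theorem~\ref{thm:kato_square_root} for the gradient convergence, bound the right hand side using Proposition~\ref{prop:sobolev_elliptic_operator} on the $x$-side and Theorem~\ref{thm:kato_square_root} on the $x'$-side, and finally convert $D((L_p(t)')^{1/2})$ into $X'_{1/2,L_p(t)'}$ via Theorem~\ref{thm:bip_second_order} and Proposition~\ref{prop:bip_fractional}. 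Your explicit remark on upgrading $\lambda R(\lambda,\cdot)$ to $(1+\abs{\lambda})R(\lambda,\cdot)$ and your careful separation of the two one-sided regularity inputs (domain embedding at exponent $p\ge 2$, Riesz transform bound at the dual exponent $p'\le 2$) are exactly the points the paper relies on, only spelled out more fully.
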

	\begin{proof}
    	Let $p \in [2,\infty)$ and $q$ be its Hölder conjugate given by $\frac{1}{p} + \frac{1}{q} = 1$. For the moment fix $t \in [0,T]$ and let $K_q(t) = L_p(t)'$, which is the realization of $-\divv(A^T(t,\cdot) \nabla \cdot)$. We notice that $A^T\colon [0,T] \times \Omega \to \IR^{N \times N}$ satisfies the same assumptions as $A$ does. Now pick some $g \in D(K_2(t)) \subset D(K_q(t))$. Writing $A(t)$ for the function $A(t, \cdot)$, it follows from the definition of $K_2(t)$ that
    	\begin{equation*}
    		\int_{\Omega} K_q(t) g \phi = \int_{\Omega} A(t)^T \nabla g \cdot \nabla \phi \qquad \text{for all } \phi \in C_c^{\infty}(\Omega).
    	\end{equation*}
    	The space $D(L_2(t))$ is a core for $D(L_q(t))$. Hence, for every $g \in D(L_q(t))$ there exists a sequence $(g_n)_{n \in \IN} \subset D(L_2(t))$ with $g_n \to g$ in $D(L_q(t))$. Now, as a consequence of Theorem~\ref{thm:kato_square_root} one has $g_n \in W_0^{1,q}(\Omega)$ and $g_n \to g \in W_0^{1,q}(\Omega)$. Taking limits, we obtain for all $\phi \in C_c^{\infty}(\Omega)$
    	\begin{equation*}
    		\int_{\Omega} K_q(t) g \phi = \lim_{n \to \infty} \int_{\Omega} K_q(t) g_n \phi = \lim_{n \to \infty} \int_{\Omega} A(t)^T \nabla g_n \cdot \phi = \int_{\Omega} A(t)^T \nabla g \cdot \nabla \phi.
    	\end{equation*}
    	By density, this identity extends from all test functions $\phi \in C_c^{\infty}(\Omega)$ to all functions $f \in W_0^{1,p}(\Omega)$. For $f \in D(L_p(s))$ and $g \in D(K_q(t))$ the inclusion $D(L_p(s)) \hookrightarrow W_0^{1,p}(\Omega)$ given by Proposition~\ref{prop:sobolev_elliptic_operator} allows us to write
    	\begin{align*}
    		\langle (L_p)_{-1}(t)f, g \rangle = \int_{\Omega} f K_q(t) g = \int_{\Omega} A(t)^T \nabla g \cdot \nabla f = \int_{\Omega} A(t) \nabla f \cdot \nabla g.
    	\end{align*}
    	Further, as a consequence of the same inclusions and by the same reasoning we have
    	\begin{align*}
    		\langle (L_p)_{-1}(s)f, g \rangle = \langle L_p(s)f, g \rangle = \int_{\Omega} A(s) \nabla f \cdot \nabla g.
    	\end{align*}
    	Altogether we therefore obtain the estimate
    	\begin{align*}
    		\MoveEqLeft \abs{\langle L_{-1}(t) - L_{-1}(s)f,g \rangle} = \abs{\int_{\Omega} (A(t) - A(s)) \nabla f \cdot \nabla g} \\
    		& \le \norm{A(t) - A(s)}_{\infty} \norm{\nabla f}_{p} \norm{\nabla g}_{q} \lesssim \norm{A(t) - A(s)}_{\infty} \norm{f}_{D(L_p(s))} \norm{g}_{D({L_p(t)'}^{1/2})} \\
			& \lesssim \abs{t-s}^{\beta } \norm{f}_{D(L_p(s))} \norm{g}_{D({L_p(t)'}^{1/2})}.
    	\end{align*}
		Now, the assertion follows from Theorem~\ref{thm:abstract_mr_non-zero} applied to the complex interpolation scale, Proposition~\ref{prop:bip_fractional} and Theorem~\ref{thm:bip_second_order}.
	\end{proof}
	
	\begin{remark}\label{rem:elliptic_less_regularity}
		In the case $p = 2$ Proposition~\ref{prop:sobolev_elliptic_operator} holds for measurable coefficients as a consequence of the fact that the domain of the associated operator embeds into the form domain. Hence, in the Hilbert space case we essentially get the same results as those that can be obtained with the results in~\cite{HaaOuh15}. Further it follows from Remark~\ref{rem:more_general_estimate} that the VMO assumption on the coefficients can be loosened for one spatial component.
	\end{remark}
	
\subsection{Comments on the one dimensional case}	

	It is known that for $N = 1$ Theorem~\ref{thm:kato_square_root} and even the Kato square root property holds for all $p \in (1, \infty)$~\cite[Proposition~5.17]{Aus07}. Hence, for $N = 1$ the result of Theorem~\ref{thm:mr_elliptic} can be obtained for measurable coefficients and all $p \in (1, \infty)$ assuming the same dependence on the constants. However, we do not know any explicit reference for this. Even more, one would get that both $L_p(t)$ and $L_p(t)'$ satisfy the Acquistapace--Terreni condition. Hence, as in Remark~\ref{rem:weights}, for $N = 1$ one would obtain maximal regularity for certain weights. Compare this with Remark~\ref{rem:elliptic_less_regularity} where also one spatial component is allowed to have less regularity.
				
	\emergencystretch=0.75em
	\printbibliography

\end{document}